\documentclass[12pt]{amsart}
\usepackage{setspace}
\usepackage{geometry}
\usepackage{todonotes}
\usepackage{amsthm}
\usepackage{amsfonts, amssymb, amscd}
\numberwithin{equation}{section}
\usepackage[symbol]{footmisc}

\usepackage{bm}
\usepackage{bbm}
\usepackage{verbatim}
\usepackage{mathrsfs}
\usepackage{graphicx}
\usepackage{tikz-cd}
\usepackage{subcaption}
\usepackage{listings}
\usepackage{subfiles}
\usepackage[toc,page]{appendix}
\usepackage{mathtools}
\usepackage{comment}
\usepackage{enumerate}
\usepackage{enumitem}
\usepackage[all]{xy}
\usepackage{graphicx}
\graphicspath{{images/}}
\usepackage{appendix}
\usepackage[colorlinks=true, citecolor=green, linkcolor=blue, filecolor=magenta, urlcolor=red,backref=page]{hyperref}

\usepackage{pgfplots}
\pgfplotsset{compat=1.17}

\newcommand{\ov}[1]{\overline{#1}}
\newcommand{\bQ}{\mathbb{Q}}
\newcommand{\bR}{\mathbb{R}}

\newcommand{\bN}{\mathbb{N}}
\newcommand{\bP}{\mathbb{P}}

\newcommand{\cA}{\mathcal{A}}

\newcommand{\cE}{\mathcal{E}}
\newcommand{\cF}{\mathcal{F}}
\newcommand{\cG}{\mathcal{G}}
\newcommand{\cH}{\mathcal{H}}

\newcommand{\cK}{\mathcal{K}}
\newcommand{\cL}{\mathcal{L}}

\newcommand{\cN}{\mathcal{N}}
\newcommand{\cO}{\mathcal{O}}

\newcommand{\cQ}{\mathcal{Q}}

\newcommand{\Pic}{\mathrm{Pic}}

\newcommand{\Supp}{\mathrm{Supp}}

\newcommand{\vol}{\operatorname{vol}}

\newcommand{\lct}{\operatorname{lct}}

\newcommand{\Spec}{\operatorname{Spec}}

\newcommand{\mult}{\operatorname{mult}}
\theoremstyle{plain} % 'this is the initial setting and can be omitted here'
\newtheorem{thm}{Theorem}[section] % number like 3.1, 3.2, 3.3, etc.
\newtheorem{lemma}[thm]{Lemma}
\newtheorem{prop}[thm]{Proposition}

\theoremstyle{definition} % 'here we change the style'
\newtheorem{defn}[thm]{Definition} % numbered with thm

\newtheorem{rem}[thm]{Remark}
\newtheorem{claim}[thm]{Claim}
\newtheorem{setup}[thm]{Set-up}
\begin{document}
	
	\title{Quasi-Projective Moduli for Polarized klt Good Minimal Models}
	\author{Xiaowei Jiang}
	\email{jxw20@mails.tsinghua.edu.cn}
	\address{Yau Mathematical Sciences Center, Tsinghua University, Beijing, China}
	\date{\today}
	\subjclass[2020]{14E30, 14J10,  14J40}
	\keywords{good minimal models, moduli spaces, weak positivity}
	
	\thanks{}

	%%%%%%%%%%%%%%%%%%%%%%%%%%%%%%%%%%%%%%%%%%%%%%%%%%%%%%%%%%Abstract
	\begin{abstract}
We prove the weak positivity of direct images for locally stable families of klt good minimal models over reduced quasi-projective bases using Gabber's Extension Theorem. As an application, we apply Viehweg's ampleness criterion to show that the normalization of the moduli space of polarized klt good minimal models of arbitrary Kodaira dimension, constructed in \cite{jiangBoundednessKltGood2023}, is quasi-projective.
	\end{abstract}
	
	\maketitle
	\tableofcontents
	%%%%%%%%%%%%%%%%%%%%%%%%%%%%%%%%%%%%%%%%%%%	 Introduction
	\section{Introduction}
	Throughout this paper, we work over an algebraically closed field $k$ of characteristic zero. We let $\mathbb{N}=\{1,2,\dots\}$ denote the set of positive integers.
	\vspace*{20pt}

The central problem in birational geometry is the classification of algebraic varieties. From the perspective of the minimal model program, canonically polarized varieties, Fano varieties, Calabi--Yau varieties, and their iterated fibrations play a central role in this classification. Closely related to this problem is the construction of moduli spaces of such higher-dimensional algebraic varieties. Over the past decades, significant progress has been made in this direction.

For canonically polarized varieties and Fano varieties, the projective moduli theory has been well developed (cf. \cite{kollarFamiliesVarietiesGeneral2023,xuKstabilityFanoVarieties2025}). This theory is built on the development of the higher-dimensional minimal model program (cf. \cite{birkarExistenceMinimalModels2010,birkarExistenceLogCanonical2012,haconExistenceLogCanonical2013}), on boundedness results (cf. \cite{haconBirationalAutomorphismsVarieties2013,haconACCLogCanonical2014,haconBoundednessModuliVarieties2018,birkarAntipluricanonicalSystemsFano2019,birkarSingularitiesLinearSystems2021}), and on positivity results (cf. \cite{fujinoSemipositivityTheoremsModuli2018,kovacsProjectivityModuliSpace2017,codogniPositivityCMLine2021,xuPositivityCMLine2020}).

However, for polarized Calabi--Yau varieties and more general polarized good minimal models, boundedness problems create substantial difficulties. Let us explain this more precisely.
If one restricts to smooth varieties, then one may take a line bundle as the polarization. This setting was studied in Viehweg's series of works \cite{viehwegWeakPositivityStability1989,viehwegWeakPositivityStability1990,viehwegWeakPositivityStability1990a,viehwegQuasiprojectiveQuotientsCompact1991,esnaultAmpleSheavesModuli1991,viehwegQuasiprojectiveModuliPolarized1995,viehwegCompactificationsSmoothFamilies2010}, where he proved that the moduli space of smooth polarized good minimal models is quasi-projective.
Since such moduli spaces are not proper, it is necessary to allow singular degenerations. In this setting, the limiting polarization is no longer Cartier, but only a $\mathbb{Q}$-Cartier Weil divisor. Restricting to the klt case, one can nevertheless show that the Cartier index of the limiting polarization is bounded \cite{birkarGeometryPolarisedVarieties2023,jiangBoundednessKltGood2023}.
The situation becomes considerably more complicated for varieties with slc singularities. In general, the Cartier index of the polarization of slc good minimal models is not bounded; see \cite{haconFailureBoundednessGeneralised2025} for surface examples with various Kodaira dimensions. As a consequence, constructing compact moduli spaces of good minimal models becomes significantly more difficult. We refer to \cite{birkarModuliAlgebraicVarieties2022,birkarGeometryPolarisedVarieties2023,ascherModuliBoundaryPolarized2023,blumGoodModuliSpaces2024,bakkerBailyBorelCompactificationsPeriod2025a} for recent developments.

The moduli space of polarized klt good minimal models was constructed in \cite{jiangBoundednessKltGood2023}, where the corresponding moduli functor $\mathfrak{G}_{klt}(d,u,\sigma)$ (see Definition~\ref{def:moduli functor}) is shown to be a separated Deligne--Mumford stack of finite type admitting a coarse moduli space $G_{klt}(d,u,\sigma)$ as a separated algebraic space.
In this paper, we study $G_{klt}(d,u,\sigma)$ further. Although it is non-proper in general, we prove that its normalization is quasi-projective.  We also refer to \cite{hattoriPositivityCMLine2026} for related results on the moduli space of klt good minimal models of Kodaira dimension one.

\begin{thm}[= Theorem \ref{thm:quasi moduli'}]\label{thm:quasi moduli}
 Let $\delta:\widetilde{G}_{klt}(d,u,\sigma)\to G_{klt}(d,u,\sigma)$ be the normalization of $G_{klt}(d,u,\sigma)$, then $\widetilde{G}_{klt}(d,u,\sigma)$ is a quasi-projective scheme. 
Moreover, if the non-normal locus of $G_{klt}(d,u,\sigma)$  is proper, then  $G_{klt}(d,u,\sigma)$ is a quasi-projective scheme.
\end{thm}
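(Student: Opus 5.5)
The plan is to apply Viehweg's ampleness criterion (in the form of Kollár's ampleness lemma / Viehweg's quasi-projectivity criterion for coarse moduli spaces of finite type DM stacks) on the normalization $\widetilde{G}_{klt}(d,u,\sigma)$. The positivity input is the weak positivity of direct images for locally stable families of klt good minimal models over reduced quasi-projective bases, obtained via Gabber's Extension Theorem.

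\textbf{Step 1: a good family over a finite cover.} Since $\mathfrak{G}_{klt}(d,u,\sigma)$ is a separated Deligne--Mumford stack of finite type, there is a finite surjective morphism $Z\to \widetilde{G}_{klt}(d,u,\sigma)$ from a normal scheme $Z$ carrying a family $f\colon (X,A)\to Z$ in $\mathfrak{G}_{klt}(d,u,\sigma)$. We may also assume $Z$ is quasi-projective, after replacing it by a suitable finite cover of a stratification, or by using Chow's lemma and passing to a Galois closure. By the boundedness results of \cite{jiangBoundednessKltGood2023}, there is a uniform $r$ such that for all sufficiently divisible $m$:
\begin{itemize}
\item $rA$ and $rK_{X/Z}$ are Cartier;
\item $\mathcal{O}_X(m(K_{X/Z}+A))$, or a suitable twist $\mathcal{O}_X(mA+\ell K_{X/Z})$, is relatively very ample;
\item $f_*$ of it is locally free and commutes with base change.
\end{itemize}

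\textbf{Step 2: weak positivity.} We show that $\mathcal{E}_m:=f_*\mathcal{O}_X(m(K_{X/Z}+A))$ is weakly positive on $Z$. We first reduce to $\det$ and symmetric powers using fiber products $X^{(s)}\to Z$, which remain locally stable klt good minimal models. Then we apply the main weak positivity theorem of the paper.

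\textbf{Step 3: Viehweg's criterion.} We consider the multiplication map
\[
\mathrm{Sym}^{\ell}\mathcal{E}_m\to \mathcal{E}_{\ell m},
\]
which is surjective for $\ell$, $m$ large by base-change-compatible relative very ampleness. This gives a classifying map from $Z$ to the quotient of a Grassmannian by $\mathrm{GL}$ (or $\mathrm{PGL}$). Its fibers are finite: by separatedness of the stack, finiteness of automorphisms of polarized klt good minimal models, and the fact that the kernel of the multiplication map determines the fiber $(X_z,A_z)$ up to isomorphism. Viehweg's ampleness lemma (Kollár's version) then shows that $\det \mathcal{E}_{\ell m}^{\otimes a}\otimes \det\mathcal{E}_m^{\otimes b}$ is ample (in the weakly positive/big sense needed) on $Z$. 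More precisely, the determinant line bundle descends, after taking a power, to a line bundle $\lambda$ on $\widetilde{G}_{klt}(d,u,\sigma)$. Its pullback to every finite cover $Z$, and so to every subvariety, is ample relative to the criterion. Using Viehweg's theorem for coarse spaces that are algebraic spaces with such a line bundle, plus the Nakai--Moishezon-type criterion on the quasi-projective compactified locus, we conclude that $\lambda$ is ample and hence $\widetilde{G}_{klt}(d,u,\sigma)$ is quasi-projective.

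\textbf{Step 4: the non-normal case.} If the non-normal locus $N$ of $G=G_{klt}(d,u,\sigma)$ is proper, then $\delta$ is a finite morphism that is an isomorphism outside $N$, and $\delta^{-1}(N)$ is proper. We want $\lambda$ to descend to $G$ with some power. We use that the determinant bundle is defined on the stack itself, so it already lives on $G$ up to a power. Ampleness on $G$ follows since $\delta$ is finite and $\delta^*\lambda$ is ample. What remains is the existence of a projective embedding, that is, that $G$ is a scheme. Here properness of $N$ is used: a finite birational map onto an algebraic space whose non-normal locus is proper allows gluing sections via pinching. This can be handled with Kollár's results on quotients by finite equivalence relations, or by the criterion that an algebraic space with an ample line bundle is a scheme.

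\textbf{Main obstacle.} The main obstacle is Step 2–3 over non-normal or merely reduced bases. Specifically, we must ensure that weak positivity holds for the direct images over reduced quasi-projective $Z$, and that the resulting line bundle is ample rather than merely nef/big. Gabber's extension is what lets us compactify and run the positivity argument, which is why we work with the normalization.
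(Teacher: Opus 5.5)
Your Step 2 has a genuine gap, and it carries over into the descent in Step 3.

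\textbf{The sheaf $\mathcal{E}_m$ is not attached to the moduli problem.} An object of $\mathfrak{G}_{klt}(d,u,\sigma)$ determines $A$ only up to $\sim_Z$. Replacing $A$ by $A\otimes f^*M$ turns $\mathcal{E}_m=f_*\mathcal{O}_X(m(K_{X/Z}+A))$ into $\mathcal{E}_m\otimes M^{m}$. So $\mathcal{E}_m$ is not weakly positive in general: twist by a sufficiently negative $M$ when $Z$ contains a complete curve. For the same reason $\det\mathcal{E}_m$ and $\det\mathcal{E}_{\ell m}$ are not line bundles on the stack, so they cannot descend.

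\textbf{Fiber products alone do not reach the positivity theorem.} The paper's positivity theorem concerns $\mathcal{O}_X(q(K_{X/S}+B))$ for a klt boundary $B$. Writing $m(K+A)=m(K+\Gamma)$ with $\Gamma\in|A|$ gives a boundary with coefficient $1$, which is never klt. The missing ingredients are as follows.
\begin{enumerate}
\item Take $L=\mathcal{O}_X(n_1K_{X/S})\otimes A^{n_2}$ relatively very ample, with $f_*L$ locally free of rank $r$ and compatible with base change.
\item Choose $l$ with a uniform bound $\lct(X_s,|L_s|)>1/l$ (Ambro, Kov\'acs--Patakfalvi).
\item Pass to a finite cover with split trace on which $\det(f_*L)\cong\mathcal{O}_S$.
\item Use the determinant section $\mathcal{O}_S\hookrightarrow\bigotimes^r f_*L\simeq f^{(r)}_*L^{(r)}$ to get a divisor $\Gamma^{(r)}$ on $X^{(r)}$ containing no fiber, so that
\[
\mathcal{O}_{X^{(r)}}(qlK_{X^{(r)}/S})\otimes (L^{(r)})^{q}\simeq\mathcal{O}_{X^{(r)}}\bigl(ql(K_{X^{(r)}/S}+\tfrac{1}{l}\Gamma^{(r)})\bigr).
\]
Here $(X^{(r)}_s,\tfrac{1}{l}\Gamma^{(r)}_s)$ is klt because lc thresholds are preserved under fiber products.
\end{enumerate}
This is what gives weak positivity of the normalized sheaf $\mathcal{E}=f_*\bigl(\mathcal{O}_X(qlK_{X/S})\otimes L^q\bigr)\otimes\det(f_*L)^{-q/r}$, with $r\mid q$. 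The twist by $\det(f_*L)^{-q/r}$ also makes $\mathcal{E}$ and $\mathcal{Q}$ independent of the choice of $A$. That independence is what lets a power of $\det(\mathcal{Q})^a\otimes\det(\mathcal{E})^b$ descend, via Viehweg's Proposition 7.9 and Lemma 9.26.

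\textbf{Step 4 uses the properness hypothesis in the wrong place.} You derive ampleness of $\lambda$ on $G_{klt}(d,u,\sigma)$ from finiteness of $\delta$ alone, and use properness of the non-normal locus only to make $G_{klt}(d,u,\sigma)$ a scheme. This is backwards. Along the normalization of a non-proper, non-normal space, ampleness need not descend; see Koll\'ar's example recalled at the end of the paper. Properness of the non-normal locus is exactly what is needed, via EGA III, 2.6.2, to descend ampleness from $\delta^*\Lambda$ to $\Lambda$.

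\textbf{Step 3 uses tools that need a proper base.} Koll\'ar's ampleness lemma and Nakai--Moishezon both require properness, which you do not have. The right tool is Viehweg's criterion for non-proper bases (Theorem~\ref{thm:viehweg ampleness}). It takes weak positivity of $\mathcal{E}$ and maximal variation of $\ker(S^p\mathcal{E}\to\mathcal{Q})$ as input. Maximal variation means finite orbit-fibres, which come from finiteness of $\tau$, and finite stabilizers, which come from finiteness of automorphism groups. The criterion then gives directly that the cover is quasi-projective and that $\det(\mathcal{Q})^a\otimes\det(\mathcal{E})^b$ is ample. One then descends a power and passes to the finite-group quotient $\widetilde{G}_{klt}(d,u,\sigma)$.

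\textbf{Step 1's reduction is unjustified and unnecessary.} A stratification is not finite over the base, and Chow's lemma does not produce a finite cover, so you have not shown $Z$ can be taken quasi-projective. You do not need to: weak positivity over an arbitrary scheme is tested after pulling back to reduced quasi-projective schemes, so the positivity theorem over quasi-projective bases suffices.
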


The general strategy for proving the projectivity of proper algebraic moduli spaces is based on the so-called ampleness lemma \cite{kollarProjectivityCompleteModuli1990} and its refined versions \cite{kovacsProjectivityModuliSpace2017,xuPositivityCMLine2020}. For non-proper moduli spaces, GIT methods are used for the moduli of smooth curves \cite{mumfordGeometricInvariantTheory1994}, smooth surfaces of general type \cite{giesekerGlobalModuliSurfaces1977}, and smooth polarized good minimal models \cite{viehwegWeakPositivityStability1989,viehwegWeakPositivityStability1990,viehwegWeakPositivityStability1990a}. In \cite[\S4]{viehwegWeakPositivityStability1989}, Viehweg also proved an ampleness criterion in the non-proper setting that avoids GIT; see \S\ref{sec:Viehweg's ampleness criterion} for details. The main task in this approach is to establish a non-proper analogue of nefness in the proper setting, called weak positivity; we refer the reader to \S\ref{sec:wp} for the relevant background.

We prove the following weak positivity result for locally stable families of polarized klt good minimal models over reduced quasi-projective bases, which is crucial for Theorem~\ref{thm:quasi moduli}. It can be viewed as a klt pairs analogue of \cite[Theorem 6.24]{viehwegQuasiprojectiveModuliPolarized1995}.

\begin{thm}[= Theorem \ref{thm:Viehweg package'}]\label{thm:Viehweg package}
Let $f:(X,B)\to S$ be a locally stable family of klt pairs over a reduced quasi-projective scheme $S$ such that $K_X+B$ is $f$-semiample. 
Let $L$ be an $f$-semiample line bundle on $X$.
Assume that:
\begin{itemize}
 % \item    $c(K_{X/S}+B)$ is Cartier for some natural number $c\geq 2$,
    \item there exists $l\in \bN$ such that 
$\lct(X_s,B_s,|L_s|)>\frac{1}{l}$ for all $s\in S$, and
\item the sheaf $f_*L$ is locally free of rank $r>0$ and compatible with arbitrary base change.
\end{itemize}

Then  for every natural number $q\geq 2$ such that $q(K_{X/S}+B)$ is Cartier,   we have:
\begin{enumerate}
    \item 
    % {\normalfont (\textbf{Base change and locally freeness})} 
    the sheaf
    $R^if_*\bigl(\cO_X(ql(K_{X/S}+B))\otimes L^q\bigr)$
    is locally free 
    %of rank $r'$ 
    and compatible with arbitrary base change  for all $i\geq0$, and

    \item
    % {\normalfont (\textbf{Weak positivity})} 
     the sheaf
    $$
    \left(\bigotimes^{r} f_*\bigl(\cO_X(ql(K_{X/S}+B))\otimes L^q\bigr)\right)\otimes\det(f_*L)^{-q}
    $$
    is weakly positive over $S$.

%   \item {\normalfont (\textbf{Weak stability})}  For $p,q \in \mathbb{N}$ divisible by $c$,  if $r' \neq 0$, then there exists a positive rational number $\delta$ such that
% \[
% \begin{aligned}
% &\left(\bigotimes^{r} f_*\bigl(\cO_X(pl(K_{X/S}+B))\otimes L^p\bigr)\right)\otimes\det(f_*L)^{-p} \\
% &\succeq\ 
% \delta \cdot \det\!\left(f_*\bigl(\cO_X(ql(K_{X/S}+B))\otimes L^q\bigr)\right)^{r}
% \otimes \det(f_*L)^{-q r'}.
% \end{aligned}
% \]
\end{enumerate}
\end{thm}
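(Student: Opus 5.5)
Part (1) is proved fibrewise and then globalised; part (2) follows Viehweg's fibre-product argument, with the weak positivity of twisted direct images of klt families as the key input.

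\textbf{Part (1).} Write $N:=\cO_X(ql(K_{X/S}+B))\otimes L^q$. Since $S$ is only reduced, we first reduce to the case of $S$ smooth (or at least normal) by base change along a resolution or normalization, and check compatibility afterwards. On each fibre we have
$$N_s-(K_{X_s}+B_s)=(ql-1)(K_{X_s}+B_s)+qL_s,$$
which is semiample and big relative to the Iitaka fibration of $K_{X_s}+B_s$, since $L$ is $f$-semiample. A Kollár-type torsion-freeness and vanishing theorem for klt pairs then gives $H^i(X_s,N_s)$ with dimension locally constant in $s$. Here we use local stability: $X_s$ is klt and $K_{X/S}+B$ commutes with base change. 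Cohomology and base change then gives local freeness and base change for all $R^if_*N$. If local constancy is not directly available, we would instead use the invariance of plurigenera for locally stable klt families with $f$-semiample log canonical divisor.

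\textbf{Part (2): fibre product.} Let $f^{(r)}\colon X^{(r)}\to S$ be the $r$-fold fibre product, with boundary $B^{(r)}$ and line bundle $L^{(r)}=\boxtimes L$. This family is again locally stable and klt, and $K+B$ stays $f^{(r)}$-semiample. By base change,
$$f^{(r)}_*L^{(r)}=\bigotimes^r f_*L,$$
and the determinant gives an inclusion $\det f_*L\hookrightarrow\bigotimes^r f_*L$. This yields a section $\Gamma$ of
$$M:=L^{(r)}\otimes f^{(r)*}\det(f_*L)^{-1},$$
which is nonvanishing on every fibre.

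\textbf{Part (2): singularities of $\Gamma$.} The lct hypothesis $\lct(X_s,B_s,|L_s|)>\frac1l$ has to propagate to the fibre product. Applying Viehweg's inequality for lct on products, we aim for
$$\lct\bigl(X^{(r)}_s,B^{(r)}_s,\tfrac1l\,\mathrm{div}\,\Gamma_s\bigr)>1.$$
This is the main obstacle. Viehweg's smooth argument degenerates $\Gamma$ to products of sections via a torus action and uses semicontinuity of lct. Here this must be redone for klt pairs, using inversion of adjunction and the lower semicontinuity of lct in locally stable families.

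\textbf{Part (2): weak positivity.} Granting this bound, the pair $(X^{(r)},B^{(r)}+\frac1{ql}\,\mathrm{div}\,\Gamma^q)$ is fibrewise klt. Consider
$$q\bigl(l(K+B)+L^{(r)}-f^{(r)*}\det f_*L\bigr)=ql\Bigl(K+B+\tfrac1l\,\mathrm{div}\,\Gamma\Bigr)\sim ql\,(K+B+\Delta),$$
with $(X^{(r)},B^{(r)}+\Delta)$ klt over a dense open subset of $S$ and relatively semiample. We then apply weak positivity of $f_*\cO(m(K_{X/S}+\Delta))$ for klt pairs with semiample log canonical divisor (Fujino, Campana–Kawamata style). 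Its pushforward is $\bigotimes^r f_*N\otimes\det(f_*L)^{-q}$ by part (1) and base change, which gives weak positivity over smooth or normal $S$.

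\textbf{Reduced bases.} For general reduced $S$, weak positivity is not obviously local for a non-normal base. We pass to the normalization $\nu\colon S^n\to S$, where the statement holds and the sheaves commute with base change. The positivity then descends using Gabber's Extension Theorem: compactify to a projective $\bar S$ and extend the locally free sheaf. Weak positivity amounts to global generation of symmetric powers twisted by an ample line bundle over an open set, and this can be tested after a finite surjective map.
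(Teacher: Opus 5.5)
Your reduction in (2) matches the paper's: the $r$-fold fibre product, then the determinant section $\Gamma$, then reduction to weak positivity of $f^{(r)}_*\cO_{X^{(r)}}\bigl(ql(K_{X^{(r)}/S}+B^{(r)}+\tfrac1l\Gamma)\bigr)$. Twisting by $f^{(r)*}\det(f_*L)^{-1}$ directly is a harmless variant of the paper's split finite cover on which $\det(f_*L)$ acquires an $r$-th root.

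\textbf{The gap is the weak positivity input.} Fujino- or Campana--Kawamata-type theorems give global generation of $S^{\alpha\beta}(\cdot)\otimes\cH^{\beta}$ only over some dense open subset you do not control. This is the weaker notion in the remark after the definition of weak positivity, which is why klt ``over a dense open subset'' was enough for you. The theorem asserts weak positivity over all of $S$, and Viehweg's ampleness criterion later needs exactly that. This is not formal even for smooth $S$. Compactifying the family requires weak semistable reduction after a generically finite base change $W\to S$, and a nef extension on $\overline W$ says nothing at points of $S$ over which $W\to S$ is not finite.

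\textbf{The descent step also fails.} Weak positivity descends along a finite surjection only when the trace splits $\cO_S\to\tau_*\cO_{S'}$ (Lemma~\ref{lem:wp functorial}(2)). For a finite birational map such as the normalization of a non-normal $S$, such a splitting forces an isomorphism, so you cannot descend from $S^n$. Saying ``compactify and extend via Gabber'' also skips the hypotheses of Set-up~\ref{setup}. You need a cover $T=\Psi^{-1}(S)$ with $\Psi\colon\mathbb{P}'\to\mathbb{P}$ finite and $\mathbb{P}'$ normal, so the trace splits by Lemma~\ref{lem:trace}. You also need compatible nef extensions along every curve $\overline C\to\mathbb{P}'$ meeting $T$, including curves lying in the singular locus of $S$.

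\textbf{What fills the gap.} The missing piece is exactly Theorem~\ref{thm: wp of direct image  of family of gmm}, and that is where the paper's work lies:
\begin{enumerate}
\item Weak semistable reduction and the lc MMP give a locally stable klt model over a compactification $\overline W$, whose direct image is nef by Theorem~\ref{thm:semipositivity}.
\item A stratification $S=S^1\supset S^2\supset\cdots$ with smooth strata, together with Lemma~\ref{lem: dominant cover}, produces a single normal cover $\Psi$ such that over each stratum, $T$ factors through a weak semistable reduction of that stratum.
\item This makes condition (3) of Set-up~\ref{setup} checkable, and Theorem~\ref{thm:Gabber extension} concludes.
\end{enumerate}

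\textbf{Smaller points.} The step you call the main obstacle is just a citation: Proposition~\ref{prop:fiber product}(3), i.e.\ Ambro's equality $\lct(X^{(r)}_s,B^{(r)}_s,|L^{(r)}_s|)=\lct(X_s,B_s,|L_s|)$.

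In (1), $N_s-(K_{X_s}+B_s)$ need not be big, because $L$ is only $f$-semiample. Take $L=\cO_X$ and $K_{X_s}+B_s\sim_{\bQ}0$: then there is no vanishing, and $R^if_*N$ can be nonzero. The working argument reduces, as in Viehweg's Lemmas 2.39--2.40, to $S$ a smooth curve. There $(X,B)$ is klt by inversion of adjunction, and Koll\'ar's torsion-freeness applies to the $f$-semiample divisor $N-(K_X+B)$. Your fallback via invariance of plurigenera would only cover $i=0$.
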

By Viehweg's fiber product trick and a careful analysis of the base change property for direct image sheaves, we reduce Theorem~\ref{thm:Viehweg package} to the following weak positivity result for locally stable families of klt good minimal models without polarizations. This result may be regarded as a klt pairs counterpart of \cite[Theorem 6.22]{viehwegQuasiprojectiveModuliPolarized1995}.
    
\begin{thm}[= Theorem \ref{thm: wp of direct image  of family of gmm'}]\label{thm: wp of direct image  of family of gmm}
Let $f:(X,B)\to S$ be a locally stable family of klt pairs over a reduced quasi-projective scheme $S$ such that $K_X+B$ is $f$-semiample. 
Then for every natural number $q\geq 2$ such that $q(K_{X/S}+B)$ is Cartier, 
\begin{enumerate}
    \item the sheaf $R^if_*\cO_X\bigl(q(K_{X/S}+B)\bigr)$ is locally free  and compatible with arbitrary base change for all $i\geq 0$, and
    \item the sheaf $f_*\cO_X\bigl(q(K_{X/S}+B)\bigr)$ is weakly positive over $S$.
\end{enumerate}
\end{thm}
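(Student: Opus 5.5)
The plan is to establish (1) first, then get (2) by reducing to the case of a smooth base and using Gabber's Extension Theorem, as the abstract suggests.

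\textbf{Step 1: local freeness and base change.} For (1), we work locally on $S$. Since $q\geq 2$ and $K_X+B$ is $f$-semiample, we can write
\[
q(K_{X/S}+B)=K_{X/S}+B+(q-1)(K_{X/S}+B).
\]
Here $(q-1)(K_{X/S}+B)$ is $f$-nef and $f$-semiample, hence $f$-nef and $f$-big over the relative Iitaka fibration. Since $(X_s,B_s)$ is klt for every $s$, Kollár-type vanishing for locally stable families applies. Concretely, on each fiber $H^i(X_s,\cO_{X_s}(q(K_{X_s}+B_s)))$ is governed by Kollár's torsion-freeness and injectivity for klt pairs with semiample $K+B$. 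This yields that $R^if_*$ is torsion free and satisfies the needed relative vanishing. By the base change criterion for flat families with $S_2$/CM-type fibers (in the locally stable setting, $\cO_X(q(K_{X/S}+B))$ is flat with fiberwise restriction $\cO_{X_s}(q(K_{X_s}+B_s))$ because it is Cartier), it suffices to show that $s\mapsto h^i(X_s,\cO_{X_s}(q(K_{X_s}+B_s)))$ is locally constant on reduced $S$.

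For the constancy we reduce to a smooth curve base via a test-curve argument. Over a curve, deformation invariance follows from the Kollár/Ambro–Fujino injectivity theorem. That theorem gives surjectivity of the restriction $H^i(X,\cO_X(q(K_{X/S}+B)))\to H^i(X_s,\cO_{X_s}(q(K_{X_s}+B_s)))$, in the style of the Ohsawa–Takegoshi or Kollár argument for plurigenera of klt pairs with semiample $K+B$. Once this is done, Grauert's theorem over the reduced base gives local freeness, and cohomology and base change then gives compatibility with arbitrary base change.

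\textbf{Step 2: weak positivity over a smooth base.} For (2), first suppose $S$ is smooth, or at least normal. Take a log resolution of $(X,B)$ and use the Fujino/Campana–Viehweg weak positivity for $f_*\cO_X(q(K_{X/S}+B))$. The standard argument is to write $\cO(q(K_{X/S}+B))=\omega_{X/S}\otimes \cO((q-1)(K_{X/S}+B)+B)$ and apply Viehweg's fiber product trick plus semipositivity of $f_*\omega_{X/S}(\Delta)$ for klt $\Delta$. This uses $f$-semiampleness to choose a general klt $\Delta\sim_\bQ (q-1)(K_{X/S}+B)+B$ over each open set. Viehweg's argument then bootstraps weak positivity from the case $q=1$ for pairs, and it works on a big open set of a smooth base.

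\textbf{Step 3: passing to a reduced base.} For general reduced quasi-projective $S$, take a resolution or alteration $\tau:S'\to S$ and the base change $f':(X',B')\to S'$. This family is still locally stable, klt, and relatively semiample. By (1), $\tau^*f_*\cO_X(q(K_{X/S}+B))\cong f'_*\cO_{X'}(q(K_{X'/S'}+B'))$, which is weakly positive by Step 2. It remains to descend weak positivity along $\tau$.

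\textbf{Main obstacle.} I expect the descent in Step 3 to be the hardest point: weak positivity does not descend naively along non-finite maps to a non-normal base. My approach is to first reduce to a finite surjective $\tau$, using that weak positivity is tested over a dense open set and that we may cut by generic finite covers. Then I would use that for finite surjective $\tau$, weak positivity of $\tau^*\cF$ implies that of $\cF$ after taking norms or traces. The remaining difficulty is that weak positivity requires global sections of $\mathrm{Sym}^{a}\cF\otimes H^b$ generating over an open set. Sections constructed on $S'$, or on the normalization, need to be extended to $S$ itself. This is where Gabber's Extension Theorem enters: I would extend $S$ to a projective compactification $\bar S$ together with a coherent extension $\bar\cF$ of the locally free sheaf and of the ample bundle. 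Then I would bound the poles of the descended sections along the conductor and along the boundary, multiplying by a sufficiently high power of the ample $H$ to clear them.
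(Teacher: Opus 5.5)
Your part (1) is essentially the paper's argument. You reduce to a smooth curve base via Viehweg's test-curve argument and the base change of the Cartier sheaf $\cO_X(q(K_{X/S}+B))$. Over the curve, $(X,B)$ is klt by Koll\'ar's inversion of adjunction, and Koll\'ar's torsion-freeness for the $f$-semiample divisor $q(K_{X/S}+B)-(K_X+B)$ gives local freeness.

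The gap is in (2), and it has two parts. First, Viehweg's ``weakly positive over $S$'' requires $S^{\alpha\beta}(\cF)\otimes\cH^{\beta}$ to be globally generated at \emph{every} point of $S$. Your remarks that the argument ``works on a big open set'' and that ``weak positivity is tested over a dense open set'' confuse this with the generic notion. The Campana/Fujino/Viehweg results you invoke in Step~2, applied after a log resolution of $(X,B)$, only give generation over the open set where the resolved family is log smooth. So they do not give weak positivity over all of $S$, even when $S$ is smooth. What you need instead is a \emph{nef} locally free sheaf on a projective compactification that restricts to a pullback of $\cF$. The paper builds it as follows. It applies weak semistable reduction to a fiberwise log resolution over a generically finite cover $W\to S'$ of a resolution $S'\to S$, with $W\subset\overline W$. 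It then runs the lc MMP over $\overline W$ to reach a locally stable good minimal model $\overline g\colon(\overline V,B_{\overline V})\to\overline W$, crepant birational to $X_W$ over $W$, so that $q(K_{V/W}+B_V)$ is Cartier. Finally, nefness over projective normal bases (Theorem~\ref{thm:semipositivity}) together with the base change from (1) gives $\cG|_W\simeq\tau^*\delta^*\cF$ with $\cG$ nef.

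Second, your descent in Step~3 does not work. The map $W\to S$ is only generically finite, so Lemma~\ref{lem:wp functorial}.(2) does not apply to it. For a finite cover of a non-normal $S$, such as its normalization, the trace need not split. Gabber's theorem is also not a device for bounding poles of descended sections along the conductor and the boundary. Twisting by powers of $\cH$ cannot make sections on the normalization satisfy the gluing conditions needed to descend to $S$. Nor can it produce generation at points over which $W\to S$ is not finite.

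What Gabber's theorem actually gives is a nef locally free extension of $\phi^*\cF$ to $\overline T$. Here $T=\Psi^{-1}(S)$ for a finite normal cover $\Psi\colon\mathbb P'\to\mathbb P$ of an ambient projective space, so the trace splits automatically (Lemma~\ref{lem:trace}), and Lemma~\ref{lem:wp functorial}.(2) then descends weak positivity. Its hypotheses need, besides the nef extension on a smooth compactification $\overline{T'}$, compatible locally free extensions along \emph{every} curve $\overline C\to\mathbb P'$ meeting $T$ (Set-up~\ref{setup}.(3)). This includes curves mapping into the locus where $W\to S$ fails to be finite. Verifying this condition is the core of the proof, and it is missing from your plan.

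The paper handles it in three moves:
\begin{itemize}
\item It stratifies $S=S^1\supset S^2\supset\cdots$ and repeats the semistable-reduction-plus-MMP construction on each stratum $S^j$, producing nef sheaves $\cG_{\overline{W^j}}$ compatible with pullback.
\item It chooses $\Psi$ by normalizing fiber products of the covers from Lemma~\ref{lem: dominant cover}, so that $\phi^{-1}(S^j-S^{j+1})$ factors through $W^j$.
\item A curve whose image lies in $S^j$ but not in $S^{j+1}$ then receives the pullback of $\cG_{\overline{W^j}}$ as its extension.
\end{itemize}
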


\noindent \textbf{Sketch of the proof of Theorem~\ref{thm: wp of direct image  of family of gmm}.}
If the base $S$ is projective and normal, a more general form of Theorem~\ref{thm: wp of direct image  of family of gmm} is known; see Theorem~\ref{thm:semipositivity}. Therefore, applying Weak Semistable Reduction \cite{abramovichWeakSemistableReduction2000} and the MMP for lc pairs \cite{birkarExistenceLogCanonical2012,haconExistenceLogCanonical2013}, after a generically finite base change, we can extend the direct image sheaves to nef locally free sheaves on a proper smooth base. However, weak positivity is not functorial on non-proper or non-normal bases, so it cannot be directly descended to the original base. A key input is Gabber's Extension Theorem (see \S\ref{sec:gabber}), which, under certain technical conditions, allows one to extend certain locally free sheaves to a compactification of a finite cover of $S$. To verify these conditions, we construct such a finite cover of $S$ with splitting trace map inductively using a stratification of $S$, and then descend weak positivity to $S$.

\vspace*{15pt}
\noindent	\textbf{Acknowledgement.}
The author would like to thank Professor Florin Ambro for helpful discussions and comments. He is also very grateful to his advisor Professor Caucher Birkar for constant support and encouragement.

	%%%%%%%%%%%%%%%%%%%%%%%%%%%%%%%%%%%%%%%%%%%Preliminaries
	\section{Preliminaries}

\subsection{Pairs and singularities}

\begin{defn}[Pairs and singularities]

A \textit{pair} $(X,B)$ consists of a quasi-projective variety $X$ and an $\bR$-divisor $B$ on $X$ such that $K_X+B$ is $\bR$-Cartier. 
If $X$ is normal,
let $D$ be a prime divisor over $X$. Let $\pi \colon X' \to X$ be a log resolution of $(X,B)$ such that $D$ is a prime divisor on $X'$. We can write
\[
K_{X'} + B' = \pi^*(K_X + B).
\]
The \textit{log discrepancy} of $D$ with respect to $(X,B)$ is defined by
\[
a(D,X,B) := 1 - \mult_D B'.
\]
We say that $(X,B)$ is \textit{klt} (resp. \textit{lc}, \textit{$\epsilon$-lc}) if $a(D,X,B) > 0$ (resp. $a(D,X,B) \ge 0$, $a(D,X,B) \ge \epsilon$) for every prime divisor $D$ over $X$.
\end{defn}

\begin{defn}[Demi-normal]
Recall that, by Serre's criterion, a scheme $X$ is normal if and only if it is $S_2$ and regular at all codimension one points.
As a weakening of normality, a scheme is called \emph{demi-normal} if it is $S_2$ and its codimension one points are either regular points or nodes.

Let $\pi:\ov{X}\to X$ denote the normalization and $D\subset X$ the divisor obtained as the closure of the nodes of $X$. Set $\ov{D}:=\pi^{-1}(D)$ with reduced structure. Then $D,\ov{D}$ are the \textit{conductors} of $\pi$, and the induced map $\ov{D}\to D$ has degree $2$ over the generic points. 
\end{defn}
\begin{defn}[Slc singularities]
Let $X$ be a demi-normal quasi-projective variety with normalization $\pi:\ov{X}\to X$ and with conductors $D\subset X$ and $\ov{D}\subset\ov{X}$. Let $B$ be an effective $\mathbb{R}$-divisor whose support does not contain any irreducible component of $D$, and $\ov{B}$ the divisorial part of $\pi^{-1}(B)$. The pair $(X,B)$ is called \textit{semi-log-canonical}, or \textit{slc}, if
\begin{enumerate}
\item $K_X+B$ is $\mathbb{R}$-Cartier, and
\item $(\ov{X},\ov{D}+\ov{B})$ is lc.
\end{enumerate}
\end{defn}

\begin{defn}[Minimal models]

 Suppose that $f : X \to Z$ and $f ^m : X ^m\to Z$ are projective morphisms,  $\phi:X\dashrightarrow X^m$ is a birational map over $Z$ which does not extract any divisor, and $(X, B)$ and $(X^ m , B^m )$ are lc pairs, where $B^m=\phi_*B$. If $a(E, X, B) > a(E, X ^m , B^ m )$ (resp. $a(E, X, B) \geq a(E, X ^m , B^ m )$) for all prime $\phi$-exceptional divisors $E \subset X$, $X^ m$ is $\bQ$-factorial, and $K_{X^m}+B^m$ is nef over $Z$, then we say that $\phi:X\dashrightarrow X^m$  is a \textit{minimal model} (resp. \textit{weak log canonical model})  of $(X,B)$ over $Z$. 
 
  A minimal model (resp. weak log canonical model) $\phi:X\dashrightarrow X^m$  of $(X,  B)$ over $Z$ is called a \textit{good minimal model} (resp. \textit{semiample model}) if $K_{X^m}+B^m$ is semiample over $Z$. 
  \end{defn}

\begin{defn}[Lc threshold of a line bundle]
   Let $(X, B)$ be an lc pair.  
The \textit{lc threshold} of an effective $\mathbb{Q}$-Cartier $\mathbb{Q}$-divisor $D$ with respect to $(X, B)$ is defined as
\[
\operatorname{lct}(X,B,D)
:=
\sup \left\{\, t\in\mathbb{R}\mid (X,B+tD)\text{ is lc} \,\right\}.
\]

Now let $L$ be a line bundle.  
We define the \textit{lc threshold} of $L$ with respect to $(X,B)$ to be the infimum of the log canonical thresholds of effective divisors in the complete linear system $|L|$:
\[
\operatorname{lct}(X,B,L)
:=
\inf \left\{\, \operatorname{lct}(X,B,D)\mid D\in |L| \,\right\}.
\]
    
\end{defn}

   \subsection{Weakly positive sheaves}\label{sec:wp} 
  In this subsection, we recall weakly positive sheaves and their properties from \cite[\S2]{viehwegQuasiprojectiveModuliPolarized1995}.
\begin{defn}[{Nef locally free sheaves, \cite[Proposition 2.9]{viehwegQuasiprojectiveModuliPolarized1995}}]\label{def:nef}
Let $\cG$ be a locally free sheaf on a proper scheme $S$. We say that $\cG$ is \textit{nef} if the following equivalent conditions hold:
\begin{enumerate}
\item For every non-singular projective curve $C$ and every morphism $f : C \to S$, every invertible quotient sheaf $\cN$ of $f^*\cG$ satisfies
$\deg(\cN) \ge 0$.

\item Let $\pi : \mathbb{P}(\cG) \to S$ be the projective bundle associated to $\cG$, and let $\mathcal{O}_{\mathbb{P}(\cG)}(1)$ be the tautological line bundle. Then $\mathcal{O}_{\mathbb{P}(\cG)}(1)$ is nef.

\item For an ample invertible sheaf $\cH$ on $S$, and for all $\alpha > 0$, there exists $\beta > 0$ such that
$S^{\alpha \cdot \beta}(\cG) \otimes \cH^{\beta}$
is generated by global sections.
\end{enumerate}
\end{defn}

\begin{lemma}[{\cite[Lemma 2.8]{viehwegQuasiprojectiveModuliPolarized1995}}]\label{lem:nef functorial}
Let $\cG$ be a locally free sheaf of rank $r$ on a reduced proper scheme $S$, and let $
\tau: S' \to S$ be a proper morphism.
\begin{enumerate}
\item If $\cG$ is nef, then $\tau^*\cG$ is nef.

\item If $\tau$ is surjective and $\tau^*\cG$ is nef, then $\cG$ is nef.
\end{enumerate}
\end{lemma}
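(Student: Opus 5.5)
The natural approach is through criterion (1) of Definition~\ref{def:nef}, testing nefness on curves. The central fact is that invertible quotients pull back along morphisms of curves, and degrees multiply by the degree of a finite map. Since the three conditions in Definition~\ref{def:nef} are equivalent, I may freely switch to the projective-bundle formulation (2) if that is more convenient.

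For part (1), I fix a nonsingular projective curve $C$, a morphism $g: C\to S'$, and an invertible quotient $g^*\tau^*\cG\to\cN$. This is the same as an invertible quotient of $(\tau\circ g)^*\cG$. Nefness of $\cG$ applied to the composite $\tau\circ g: C\to S$ then gives $\deg\cN\ge0$, so $\tau^*\cG$ is nef. Properness of $\tau$ is not needed here; $\tau^*\cG$ only has to live on a proper scheme so that the definition applies, and it does because $S'\to S$ is proper and $S$ is proper.

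For part (2), I fix $h: C\to S$ and an invertible quotient $h^*\cG\to\cN$, and I need to lift $h$ through $\tau$ after a finite cover of $C$. The plan is as follows.
\begin{enumerate}
\item If $h$ is constant, $h^*\cG$ is trivial, so every invertible quotient is globally generated and has $\deg\ge0$.
\item Otherwise, let $\eta$ be the generic point of $h(C)$. Since $\tau$ is proper and surjective, the fibre $\tau^{-1}(h(C))\to h(C)$ is surjective and proper. I pick a closed point of the generic fibre over $\eta$; it lies in a closed integral curve $C''\subset S'$ dominating $h(C)$. Such a curve exists by cutting the fibre over $h(C)$ with general hyperplane sections, or by taking the closure of a point finite over $k(\eta)$.
\item Let $C'$ be the normalization of the irreducible component of $C''\times_{h(C)}C$ that dominates $C$. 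The map $\pi: C'\to C$ is then finite surjective of degree $e>0$, and it comes with $g: C'\to S'$ satisfying $\tau\circ g=h\circ\pi$.
\end{enumerate}
Then $\pi^*\cN$ is an invertible quotient of $\pi^*h^*\cG=g^*\tau^*\cG$. Nefness of $\tau^*\cG$ gives $\deg\pi^*\cN=e\deg\cN\ge0$, hence $\deg\cN\ge0$.

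The main obstacle is the lifting step (2): producing a curve in $S'$ that maps finitely onto $h(C)$. The difficulty is that the fibres of $\tau$ may have positive dimension, so a curve cannot simply be pulled back. The cleanest way is to take a closed point of the generic fibre $S'_\eta$, which is a nonempty finite-type scheme over $k(\eta)$. Its residue field is a finite extension of $k(\eta)$, so the closure of this point is a curve finite over $h(C)$. Reducedness of $S$ plays no essential role beyond matching the hypotheses of Definition~\ref{def:nef}.
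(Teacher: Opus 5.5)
Your proof is correct and is essentially the standard argument behind the cited \cite[Lemma 2.8]{viehwegQuasiprojectiveModuliPolarized1995}; the paper gives no proof of its own. Viehweg passes to $\mathcal{O}_{\mathbb{P}(\cG)}(1)$ via $\mathbb{P}(\tau^*\cG)=\mathbb{P}(\cG)\times_S S'$, and then, as you do, checks degrees on curves and lifts a curve in $S$ to a curve in $S'$ that is finite over it, so applying criterion (1) directly is only a cosmetic difference. One small correction: drop the hyperplane-section alternative, because $S'$ is only assumed proper, not projective; use only the closure of a closed point of $S'_\eta$, which is proper and quasi-finite, hence finite, over $h(C)$.
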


If $S$ is quasi-projective, the numerical characterization in Definition~\ref{def:nef}.(1) and the functorial property in Lemma~\ref{lem:nef functorial} are no longer available. We will instead use Definition~\ref{def:nef}.(3) to define weak positivity in the quasi-projective case.

\begin{defn}
Let $S$ be a quasi-projective scheme, let $S_0 \subset S$ be an open subscheme, and let $\cG$ be a coherent sheaf on $S$. We say that $\cG$ is \textit{globally generated over $S_0$} if the natural map
\[
H^0(S,\cG) \otimes_k \mathcal{O}_S \to \cG
\]
is surjective over $S_0$.
\end{defn}

\begin{defn}[Weakly positive sheaves]
Let $S$ be a quasi-projective reduced scheme, let $S_0 \subseteq S$ be an open dense subscheme, and let $\cG$ be a locally free sheaf on $S$ of finite constant rank. Then $\cG$ is called \textit{weakly positive over $S_0$} if for an ample invertible sheaf $\cH$ on $S$ and for a given number $\alpha > 0$, there exists some $\beta > 0$ such that $S^{\alpha \cdot \beta}(\cG) \otimes \cH^{\beta}$ is globally generated over $S_0$.
\end{defn}

Assume that $S$ is projective. Then the sheaf $\cG$ is weakly positive over $S$ if and only if it is nef.

\begin{rem} There is another weaker definition of weak positivity in the literature; see for example \cite{fujinoNotesWeakPositivity2017}\cite[Definition 4.7]{kovacsProjectivityModuliSpace2017}. It only requires that $S^{\alpha \cdot \beta}(\cG) \otimes \cH^{\beta}$ is generically globally generated, without specifying the locus of global generation. This notion is easier to verify than Viehweg's original notion of weak positivity, and it is useful for proving the bigness of certain direct image sheaves in the study of the Iitaka conjecture.
\end{rem}

The functorial properties of weakly positive sheaves with respect to finite morphisms are more subtle.
\begin{lemma}[{\cite[Lemma 2.15]{viehwegQuasiprojectiveModuliPolarized1995}}]
\label{lem:wp functorial}
Let $S$ be a quasi-projective reduced scheme, let $S_0 \subseteq S$ be an open dense subscheme, and let $\cG$ be a locally free sheaf on $S$ of finite constant rank. Let $\tau : S' \to S$ be a morphism of reduced quasi-projective schemes.

\begin{enumerate}
\item If $\cG$ is weakly positive over $S_0$, then $\tau^*\cG$ is weakly positive over $S_0':= \tau^{-1}(S_0)$.

\item If $\tau : S' \to S$ is surjective such that $\tau^*\cG$ is weakly positive over $S_0' $, the restriction $\tau_0 := \tau|_{S_0'}$ is finite, and the trace map splits the inclusion
$\mathcal{O}_{S_0} \to \tau_{0*}\mathcal{O}_{S_0'}$,
then $\cG$ is weakly positive over $S_0$.
\end{enumerate}
\end{lemma}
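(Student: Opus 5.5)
Part (1) is immediate from the definition. Fix an ample invertible sheaf $\cH$ on $S$ and $\alpha>0$. Choose $\beta>0$ so that $S^{\alpha\beta}(\cG)\otimes\cH^{\beta}$ is globally generated over $S_0$. Every ample $\cH'$ on $S'$ is then handled in the standard way.

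\emph{Setup of (1).} Since $S'$ is quasi-projective, $\tau^*\cH$ need not be ample. However, for any ample $\cH'$ on $S'$ there is $\nu>0$ such that $\cH'^{\nu}\otimes\tau^*\cH^{-1}$ is globally generated on $S'$.

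\emph{Proof of (1).} Pulling back the surjection $H^0(S,S^{\alpha\beta}(\cG)\otimes\cH^\beta)\otimes\cO_S\to S^{\alpha\beta}(\cG)\otimes\cH^\beta$, which holds over $S_0$, gives that $S^{\alpha\beta}(\tau^*\cG)\otimes\tau^*\cH^{\beta}$ is globally generated over $S_0'$. Tensoring with the globally generated sheaf $(\cH'^{\nu}\otimes\tau^*\cH^{-1})^{\beta}$ shows that $S^{\alpha\beta}(\tau^*\cG)\otimes\cH'^{\nu\beta}$ is globally generated over $S_0'$. Applying this with $\alpha$ replaced by $\nu\alpha$ yields the required statement for exponent $\beta'=\nu\beta$. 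Indeed, globally generated over $S_0'$ for exponent $(\nu\alpha)\beta$ with twist $\cH'^{\nu\beta}$ is exactly the condition with $\alpha$ and $\beta'=\nu\beta$.

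\emph{Setup of (2).} Fix $\cH$ ample on $S$ and $\alpha>0$. The plan is to show that $S^{\alpha\beta}(\cG)\otimes\cH^\beta$ is globally generated over $S_0$ for some $\beta$. Since $\tau$ is only finite over $S_0$, we cannot push forward globally without losing coherence. First shrink: it suffices to work on an open neighbourhood of $S_0$ over which $\tau$ is finite. We then need to extend sections back to $S$, which we do by twisting with a high power of $\cH$.

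\emph{Main step.} By (1), $\tau^*\cH$ is ample over $S_0'$, since $\tau_0$ is finite. Using weak positivity of $\tau^*\cG$ with $\cH'$ chosen so that $\tau^*\cH=\cH'^{\mu}\otimes(\text{gg})$, find $\beta$ with $S^{2\alpha\beta}(\tau^*\cG)\otimes\tau^*\cH^{\beta}$ globally generated over $S_0'$. Then
\[
\tau_{0*}\tau_0^*\bigl(S^{2\alpha\beta}(\cG)\otimes\cH^\beta\bigr)=S^{2\alpha\beta}(\cG)\otimes\cH^\beta\otimes\tau_{0*}\cO_{S_0'}
\]
is generated over $S_0$ by sections coming from $H^0(S',\cdot)$. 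Composing with the trace map, which splits $\cO_{S_0}\to\tau_{0*}\cO_{S_0'}$, gives a surjection onto $S^{2\alpha\beta}(\cG)\otimes\cH^\beta|_{S_0}$. The resulting sections are defined on $S_0$ and are rational on $S$. After twisting by $\cH^{\beta}$ once more, they extend to global sections of $S^{2\alpha\beta}(\cG)\otimes\cH^{2\beta}$ over $S$. To arrange this, use Serre vanishing/extension of sections for the coherent sheaf $\tau_*$ restricted to a neighbourhood where $\tau$ is finite, together with pole control along $S\setminus S_0$ by a power of $\cH$. This shows weak positivity with exponents $(\alpha,2\beta)$.

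\emph{Main obstacle.} The hardest point is this extension of sections from $S_0$ to all of $S$ with bounded poles. $\tau$ is not finite outside $S_0$ and $S_0$ need not have complement of codimension $\ge 2$, so the twist must be chosen uniformly in $\alpha$. I would handle it as Viehweg does: compactify $\tau$ to a finite map on an open set $U\supset S_0$ (Zariski's main theorem), apply the trace on $U$, and use that sections of a coherent sheaf on $U$ extend after multiplying by a section of $\cH^{m}$ vanishing on $S\setminus U$. The exponent $m$ is absorbed into $\beta$ by the usual doubling trick.
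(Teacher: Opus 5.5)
Your part (1) is correct. Part (2) has a genuine gap, and under the hypotheses as stated it cannot be closed. (The paper gives no argument here; it only cites Viehweg.) Two steps fail.

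First, suppose $t_j\in H^0(S',\tau^*\mathcal{M})$, with $\mathcal{M}=S^{2\alpha\beta}(\cG)\otimes\cH^{\beta}$, generate $\tau^*\mathcal{M}$ over $S_0'$. Their images then generate $\mathcal{M}\otimes\tau_{0*}\cO_{S_0'}$ only as a $\tau_{0*}\cO_{S_0'}$-module, while the trace is merely $\cO_{S_0}$-linear. So the traces of the $t_j$ need not generate $\mathcal{M}|_{S_0}$: for the \'etale double cover of an elliptic curve trivializing a $2$-torsion line bundle $\eta$, the trace of $1\in H^0(\tau^*\eta)$ is zero. You must also trace the products $a\,t_j$, where $a$ runs through global generators of $\tau_*\cO_{S'}\otimes\cH^{\gamma}$ for a fixed $\gamma$.

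Second, in your extension step the exponent $m$ needed to extend a traced section depends on that section, hence on $\alpha$ and $\beta$. The doubling trick only absorbs a twist bounded independently of $\alpha$. The uniform device is to extend the trace homomorphism itself. It is a section over $S_0$ of $\mathcal{H}\!om(\tau_*\cO_{S'},\cO_S)$. Write $S_0=\bigcup_i S_{h_i}$ with $h_i\in H^0(S,\cH^{m_1})$; then some $h_i^{n}\cdot\mathrm{tr}$ extends to $\sigma_i\colon\tau_*\cO_{S'}\to\cH^{nm_1}$. The $\sigma_i(a\,t_j)$ are global sections of $\mathcal{M}\otimes\cH^{\gamma+nm_1}$ that generate it over $S_0$, and $\gamma+nm_1$ is independent of $\alpha,\beta$.

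Both repairs require $\tau_*\cO_{S'}$ to be coherent, i.e.\ $\tau$ proper. The same coherence is what justifies that $\tau^*\cH^{N}\otimes\cH'^{-1}$ is generated over $S_0'$ by sections on $S'$. Ampleness of $\tau^*\cH$ on $S_0'$ gives no sections on $S'$, and global generation on all of $S'$ fails if $\tau$ contracts curves.

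Without properness the statement is false. Let $\hat\tau\colon\mathbb{P}^2\to\mathbb{P}^2$, $[x:y:z]\mapsto[x^2+z^2:xy+y^2+xz:yz]$. The three quadrics have no common zero, so $\hat\tau$ is finite of degree $4$. Put $S=\mathbb{P}^2$, $\ell=\{Z=0\}$, $S_0=S\setminus\ell$, $S'=\{y\neq0\}\cong\mathbb{A}^2$, and $\tau=\hat\tau|_{S'}$.
\begin{itemize}
\item Since $\hat\tau^{-1}(\ell)=\{yz=0\}$, we have $\tau^{-1}(S_0)=\hat\tau^{-1}(S_0)$. Hence $\tau_0$ is finite flat and $\tfrac14\mathrm{tr}$ splits $\cO_{S_0}\to\tau_{0*}\cO_{S_0'}$.
\item $\tau$ is surjective, because $x\mapsto[x^2:x+1]$ maps $\{y=1,z=0\}\cong\mathbb{A}^1$ onto $\ell$.
\item As $S'$ is affine, $\tau^*\cO_{\mathbb{P}^2}(-1)$ is weakly positive over $S_0'$.
\item Yet $\cO_{\mathbb{P}^2}(-1)$ is not weakly positive over $S_0$: for $\cH=\cO(h)$ and $\alpha>h$, $S^{\alpha\beta}(\cO(-1))\otimes\cH^\beta$ has no nonzero sections.
\end{itemize}
So (2) needs $\tau$ proper, e.g.\ finite. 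This holds in every application in the paper: $\phi=\Psi|_T$ in Theorem~\ref{thm:Gabber extension}, and the finite covers in the proof of Theorem~\ref{thm:Viehweg package'}. In that setting your outline goes through once the two repairs above are made.
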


%   \begin{defn}
%        Let $\cF$ be a locally free sheaf and let $\cA$ be an invertible sheaf, both on a quasi-projective reduced scheme $S$.
% For $\alpha,\beta \in \mathbb{N}$, we write
% $\cF\succeq\frac{\beta}{\alpha}\cA$
% if
% $S^\alpha(\cF) \otimes \cA^{-\beta}$
% is weakly positive over $S$.
%    \end{defn}

\subsection{Gabber's Extension Theorem}\label{sec:gabber}
In this subsection, we recall Gabber's extension result from \cite[\S5]{viehwegQuasiprojectiveModuliPolarized1995} (see also \cite[\S12]{viehwegCompactificationsSmoothFamilies2010}).

\begin{lemma}\label{lem:trace}
Let $\mathbb{P}$ be an irreducible normal projective variety, let $\Psi:\mathbb{P}'\to \mathbb{P}$ be a finite normal covering, and let $\overline{S}\subset \mathbb{P}$ be a closed subvariety. Then $\ov{\phi}:\overline{T}:=\Psi^{-1}(\overline{S})\to \overline{S}$ admits a splitting trace map.
\end{lemma}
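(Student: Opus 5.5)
The plan is to build the trace on $\mathbb{P}$ first and then restrict it to $\overline{S}$. Since $\Psi$ is a finite covering between irreducible normal varieties (taking $\mathbb{P}'$ irreducible, or working one component at a time), it is surjective of some degree $d$. On function fields the trace $\operatorname{Tr}:k(\mathbb{P}')\to k(\mathbb{P})$ is defined. The trace of an element integral over the normal local rings of $\mathbb{P}$ is again integral, so it lies in $\mathcal{O}_{\mathbb{P}}$. Hence $\operatorname{Tr}$ gives an $\mathcal{O}_{\mathbb{P}}$-linear map $\Psi_*\mathcal{O}_{\mathbb{P}'}\to\mathcal{O}_{\mathbb{P}}$. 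Its composition with $\mathcal{O}_{\mathbb{P}}\to\Psi_*\mathcal{O}_{\mathbb{P}'}$ is multiplication by $d$. Because $\operatorname{char}k=0$, the map $\frac1d\operatorname{Tr}$ splits the inclusion.

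Next I would descend this map to $\overline{S}$. Let $I\subset\mathcal{O}_{\mathbb{P}}$ be the ideal of $\overline{S}$; it is prime since $\overline{S}$ is a subvariety. I take $\overline{T}$ with its reduced structure. Then $\overline{\phi}_*\mathcal{O}_{\overline{T}}$ is the quotient of $\Psi_*\mathcal{O}_{\mathbb{P}'}$ by $J:=\sqrt{I\mathcal{O}_{\mathbb{P}'}}$, the ideal of functions vanishing on $\Psi^{-1}(\overline{S})$. It therefore suffices to show $\operatorname{Tr}(J)\subset I$. Then $\operatorname{Tr}$ induces an $\mathcal{O}_{\overline{S}}$-linear map $\overline{\phi}_*\mathcal{O}_{\overline{T}}\to\mathcal{O}_{\overline{S}}$. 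Its composition with $\mathcal{O}_{\overline{S}}\to\overline{\phi}_*\mathcal{O}_{\overline{T}}$ is still multiplication by $d$, and $\frac1d$ of it is the required splitting.

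The main step is the containment $\operatorname{Tr}(J)\subset I$. Linearity alone only gives $\operatorname{Tr}(I\mathcal{O}_{\mathbb{P}'})\subset I$, which is not enough when $\overline{T}$ is non-reduced. I would handle this via a Galois closure, working locally at the generic point of $\overline{S}$ or at any point of it.
\begin{enumerate}
\item Let $L$ be a Galois closure of $k(\mathbb{P}')/k(\mathbb{P})$, and let $\mathbb{P}''$ be the normalization of $\mathbb{P}$ in $L$, with finite maps $\mathbb{P}''\to\mathbb{P}'\to\mathbb{P}$.
\item Then $\operatorname{Tr}(a)=\sum_\sigma\sigma(a)$, summing over the embeddings of $k(\mathbb{P}')$ into $L$ over $k(\mathbb{P})$.
\item Take $a\in J$. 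Then $a$ vanishes on the whole preimage of $\overline{S}$ in $\mathbb{P}'$, so its pullback vanishes on the whole preimage of $\overline{S}$ in $\mathbb{P}''$.
\item The Galois group permutes the irreducible components of this preimage (transitively over each point), so every conjugate $\sigma(a)$ also vanishes on the full preimage in $\mathbb{P}''$.
\item Hence $\operatorname{Tr}(a)$ is a regular function on $\mathbb{P}$ whose pullback vanishes on the preimage of $\overline{S}$. Since $\mathbb{P}''\to\mathbb{P}$ is surjective, $\operatorname{Tr}(a)$ vanishes on $\overline{S}$, so $\operatorname{Tr}(a)\in\sqrt I=I$.
\end{enumerate}
I would check carefully that "vanishing on the full preimage" is preserved by the Galois action, which is clear since that preimage is Galois-stable. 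If $\overline{S}$ were merely reduced and not irreducible, the same argument still works because $I$ is radical.
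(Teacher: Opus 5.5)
Your proposal is correct and follows the same route as the paper. Both arguments take the trace $\Psi_*\mathcal{O}_{\mathbb{P}'}\to\mathcal{O}_{\mathbb{P}}$ (scaled by $1/d$ to split the inclusion), compose with restriction to $\overline{S}$, and factor the result through $\overline{\phi}_*\mathcal{O}_{\overline{T}}$. The paper only asserts this factorization. Linearity alone gives it for the scheme-theoretic preimage, and your Galois-closure argument that $\operatorname{Tr}\bigl(\sqrt{I\mathcal{O}_{\mathbb{P}'}}\bigr)\subset I$ supplies the missing justification when $\overline{T}$ carries its reduced structure.
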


\begin{proof}
Since $\mathbb{P}'$ is normal, $\mathcal{O}_{\mathbb{P}}$ is a direct summand of $\Psi_*\mathcal{O}_{\mathbb{P}'}$. Then $\ov{\phi}:\overline{T}\to \overline{S}$ is automatically finite, and the trace map gives a surjection
\[
\Psi_*\mathcal{O}_{\mathbb{P}'}\to\mathcal{O}_{\mathbb{P}}\to \mathcal{O}_{\overline{S}}.
\]
Moreover, the composed map factors through
$\ov{\phi}_*\mathcal{O}_{\overline{T}}\to \mathcal{O}_{\overline{S}}$.
\end{proof}

\begin{lemma}[{\cite[Lemma 12.3]{viehwegCompactificationsSmoothFamilies2010}, \cite[Claim 5.8]{viehwegQuasiprojectiveModuliPolarized1995}}]\label{lem: dominant cover}
   If $\lambda : W \to \mathbb{P}^M$ is a morphism such that each component of $W$ is generically finite over its image, then there exists a finite normal Galois cover $\Psi : \mathbb{P}' \to \mathbb{P}^M$, a scheme $\widetilde{W}$ birational to $W$, and a subscheme $T:= \Psi^{-1}(\lambda(W))$ of $\mathbb{P}'$ such that $\Psi|_{T}$ factors through $\widetilde{W}$.
\end{lemma}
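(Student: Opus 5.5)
We prove the statement by working component by component, using the classical construction of a splitting field for a finite extension of function fields of $\mathbb{P}^M$. First we may assume $W$ is reduced: the problem only involves $\lambda(W)$ and schemes birational to $W$, and $W_{\mathrm{red}}$ is birational to $W$. Let $W_1,\dots,W_n$ be the irreducible components of $W$. Set $Z_i:=\overline{\lambda(W_i)}\subset\mathbb{P}^M$. By hypothesis each $W_i\to Z_i$ is generically finite, so $k(W_i)$ is a finite extension of $k(Z_i)$. The goal is a single finite normal Galois cover $\Psi:\mathbb{P}'\to\mathbb{P}^M$ such that, for every $i$ and every component $T_{ij}$ of $\Psi^{-1}(Z_i)$ dominating $Z_i$, there is a $k(Z_i)$-embedding $k(W_i)\hookrightarrow k(T_{ij})$.

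The main step, and the expected obstacle, is to lift the function field extensions $k(W_i)/k(Z_i)$ of the subvarieties $Z_i$ to an extension of $k(\mathbb{P}^M)$. For this, write $k(W_i)=k(Z_i)(\theta_i)$ by the primitive element theorem (characteristic zero). Let $g_i$ be the minimal polynomial of $\theta_i$, with coefficients in $k(Z_i)$. Clear denominators so that the coefficients are regular on an affine open of $Z_i$, and make $g_i$ monic using a suitable rescaling of $\theta_i$. Then lift the coefficients to polynomial functions on an affine chart $\mathbb{A}^M\subset\mathbb{P}^M$ meeting $Z_i$; this gives a monic polynomial $G_i$ over $k(\mathbb{P}^M)$. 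Note that $G_i$ need not be irreducible. Let $K$ be the Galois closure over $k(\mathbb{P}^M)$ of the splitting field of $\prod_i G_i$, and let $\mathbb{P}'$ be the normalization of $\mathbb{P}^M$ in $K$. Then $\Psi$ is a finite normal Galois cover.

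Next we check that each $T_{ij}$ contains a root of $g_i$. The roots of $G_i$ lie in $K$ and are integral over the local ring of $\mathbb{P}^M$ at the generic point of $Z_i$, because we arranged $G_i$ to be monic with coefficients regular there. Hence the roots are regular functions at the generic point of $T_{ij}$ on the normal variety $\mathbb{P}'$. Restrict to $T_{ij}$. Since $G_i$ splits into linear factors in $K$, the polynomial $g_i=G_i|_{Z_i}$ splits in $k(T_{ij})$. So some root gives a $k(Z_i)$-embedding $k(W_i)\hookrightarrow k(T_{ij})$, which is exactly a dominant rational map $T_{ij}\dashrightarrow W_i$ over $Z_i$.

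Finally we build $\widetilde W$ and the factorization. The cover $\Psi$ is finite and Galois, so every component of $\Psi^{-1}(Z_i)$ dominates $Z_i$. Let $\widetilde W_i$ be the normalization of $Z_i$ in $k(W_i)$; it is birational to $W_i$. Because $T_{ij}$ is finite over $Z_i$ and $k(W_i)\subset k(T_{ij})$, the map $T_{ij}^{\nu}\to Z_i$ factors through $\widetilde W_i$ by the universal property of normalization. To obtain the factorization on $T:=\Psi^{-1}(\lambda(W))$ itself rather than on its normalization, we take $\widetilde W$ to be the scheme-theoretic image of $T$ in $\bigsqcup_i \widetilde W_i$. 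Concretely, we replace $\widetilde W_i$ by the image of $T_{ij}$ in $\widetilde W_i$, which is a closed subscheme birational to $W_i$, and glue these along the preimages of the intersections of the $Z_i$, compatibly with $T$. The delicate point is that $T$ need not be normal. We handle this by allowing $\widetilde W$ to be the non-normal scheme given as the image of $T$, which only needs to be birational to $W$. Since $T\to\lambda(W)$ is finite, this image is a finite $\lambda(W)$-scheme through which $\Psi|_T$ factors tautologically, and it is birational to $W$ by the embeddings of function fields constructed above.
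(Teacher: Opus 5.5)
The paper gives no proof of this lemma; it is quoted from Viehweg. So your proposal has to stand on its own. Your function-field step is correct. Lifting the rescaled minimal polynomial $g_i$ to a monic $G_i$ with coefficients regular at the generic point of $Z_i$, normalizing $\mathbb{P}^M$ in a Galois extension where all $G_i$ split, and reducing the integral roots at the generic points of the $T_{ij}$ does give $k(Z_i)$-embeddings $k(W_i)\hookrightarrow k(T_{ij})$. Going-down then shows that every $T_{ij}$ dominates $Z_i$.

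\textbf{The gap.} It is in your last paragraph, which is where the scheme-theoretic content of the statement lies. What you actually have are morphisms $T_{ij}^\nu\to\widetilde W_i$ from the normalizations. There is no morphism $T\to\bigsqcup_i\widetilde W_i$, nor $T_{ij}\to\widetilde W_i$. So ``the scheme-theoretic image of $T$ in $\bigsqcup_i\widetilde W_i$'' and ``the image of $T_{ij}$ in $\widetilde W_i$'' are not defined. Moreover, the image of $T_{ij}^\nu$ is all of $\widetilde W_i$, so that replacement changes nothing.

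The map from $T^\nu$ descends to $T$ only if the pull-backs of functions on $\widetilde W_i$, which are merely integral over $\cO_{Z_i}$, lie in $\cO_T$. This fails in general:
\begin{itemize}
\item at a cusp of some $T_{ij}$. For example, let $T=\{u^2=v^3\}$ lie over $Z=\Spec k[v]$, with $k(W)=k(\sqrt v)$. The required function $u/v$ is not regular on $T$, so $T$ has no morphism over $Z$ to the normalization of $Z$ in $k(W)$.
\item at points where two components over $Z_i$ meet, if your independently chosen embeddings send such a point to different points of $\widetilde W_i$.
\end{itemize}
``Gluing compatibly with $T$'' does not say how to overcome this. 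The only schemes through which $\Psi|_T$ factors tautologically are $\lambda(W)$ and $T$ itself, and neither is birational to $W$ in general.

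\textbf{The missing idea.} Build $\widetilde W$ from a subalgebra of $\phi_*\cO_T$ rather than as an image. Give $T$ its reduced structure and let $\iota$ be the product of your chosen embeddings into the total ring of fractions $\prod_{i,j}k(T_{ij})$. Put $\mathcal{C}:=\phi_*\cO_T\cap\iota\bigl(\prod_i k(W_i)\bigr)$ and $\widetilde W:=\Spec_{\lambda(W)}\mathcal{C}$.

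This $\mathcal{C}$ is a coherent $\cO_{\lambda(W)}$-algebra containing the image of $\cO_{\lambda(W)}$, because the embeddings are $k(Z_i)$-linear. The inclusion $\mathcal{C}\subset\phi_*\cO_T$ is exactly a morphism $T\to\widetilde W$ over $\lambda(W)$, so $\widetilde W$ will in general be non-normal.

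Birationality needs an argument. Locally choose $a\in\cO_{\lambda(W)}$, not identically zero on $Z_i$, whose pull-back lies in the conductor of $\cO_T\subset\cO_{T^\nu}$. This is possible because $T$ is normal at its generic points. Then $a$ times the integral closure of $\cO_{Z_i}$ in $k(W_i)$, placed in the $i$-th factor, lies in $\mathcal{C}$. Hence $\mathcal{C}$ becomes $k(W_i)$ at the generic point of $Z_i$.

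Components of $W$ whose images equal, or lie inside, the image of another component are invisible at the generic points of $T$. They have to be added to $\widetilde W$ separately.

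\textbf{Alternative for the paper's use.} In Step~3 the lemma is only used over $S^j-S^{j+1}$. For that, it suffices to shrink $\lambda(W)$ to a dense open subset over which $T$ is a disjoint union of normal varieties. There your normalization argument already works, but this is weaker than the statement as written.
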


\begin{setup}\label{setup}
    
Let $\mathbb{P}$ be a normal projective scheme, 
$\overline{S}\subset \mathbb{P}$ a closed reduced subscheme,
and $S \subset \overline{S}$ an open dense subscheme. 
Let $\Psi: \mathbb{P}' \to \mathbb{P}$ be a finite covering with $\mathbb{P}'$ normal. 
Write $\overline{T} = \Psi^{-1}(\overline{S})$, $T= \Psi^{-1}(S)$, and denote by
$\overline{\phi}= \Psi|_{\overline{T}}$ and $\phi = \Psi|_T$.
Consider a modification $\xi: T' \to T$ with $T'$ nonsingular, and let $\overline{T'}$ be a projective smooth variety containing $T'$ as an open dense subscheme.

Let $\mathcal{F}$ and $\mathcal{G}_{\ov{T'}}$ be locally free sheaves on $S$ and $\overline{T'}$ respectively, and set $\mathcal{F}_{T} := \phi^*\mathcal{F}$. We assume the following:

\begin{enumerate}
    \item $\xi^*\mathcal{F}_{T} = \mathcal{G}_{\ov{T'}}|_{T'}$.
\item $\mathcal{G}_{\ov{T'}}$ is nef.
  \item For each morphism $\overline{\eta}: \overline{C} \to \mathbb{P}'$ from a nonsingular projective curve $\overline{C}$, with $C = \overline{\eta}^{-1}(T)$ dense in $\overline{C}$, the sheaf $\cF_C:=(\overline{\eta}|_{C})^*\mathcal{F}_{T}$ extends to a locally free sheaf $\mathcal{G}_{\overline{C}}$ satisfying:

\begin{enumerate}
   \item[(a)] If $\overline{\eta}': \overline{C'} \to \mathbb{P}'$ factors through $\overline{\gamma}: \overline{C'}\to\overline{C} $, then $\mathcal{G}_{\overline{C'}} = \overline{\gamma}^*\mathcal{G}_{\overline{C}}$.

\item[(b)] If $\overline{\eta}: \overline{C} \to \mathbb{P}'$ lifts to a morphism $\overline{\chi}: \overline{C} \to \overline{T'}$, then $\mathcal{G}_{\overline{C}} = \overline{\chi}^*\mathcal{G}_{\overline{T'}}$.
\end{enumerate}
\end{enumerate}
\end{setup}

\begin{thm}[Gabber's Extension Theorem]\label{thm:Gabber extension}

In Set-up~\ref{setup}, after blowing up $\mathbb{P}$ along centers disjoint from $S$ and replacing $\mathbb{P}'$ by the normalization of $\mathbb{P}$ in its function field, the following hold:
\begin{enumerate}
    \item There exists an extension of $\mathcal{F}_{T}$ to a locally free sheaf $\mathcal{F}_{\overline{T}}$ on $\overline{T} = \Psi^{-1}(\overline{S})$ such that for all commutative diagrams
\[
\begin{tikzcd}[row sep=1.5em, column sep=3.5em]
T' \arrow[r, "\subset"] \arrow[d, "\xi"']
& \overline{T'} 
& \Lambda \arrow[l, "\rho"'] \arrow[ld, "\psi"'] \\
T \arrow[r, "\subset"] \arrow[d, "\phi"']
& \overline{T} \arrow[r, "\subset"] \arrow[d, "\overline{\phi}"']
& \mathbb{P}' \arrow[d, "\Psi"] \\
S \arrow[r, "\subset"]
& \overline{S} \arrow[r, "\subset"]
& \mathbb{P}
\end{tikzcd}
\]
with $\Lambda$ proper and nonsingular, and with $\psi$ and  $\rho$ are two birational morphisms, one has
$\psi^*\mathcal{F}_{\overline{T}} = \rho^*\mathcal{G}_{\ov{T'}}$.

    \item $\mathcal{F}_{\overline{T}}$ is nef.

    \item $\mathcal{F}$ is weakly positive over $S$.
\end{enumerate}
\end{thm}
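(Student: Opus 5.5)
The plan follows Viehweg's argument in \cite[\S5]{viehwegQuasiprojectiveModuliPolarized1995}: first build the extension $\mathcal{F}_{\overline{T}}$ as a sheaf on the closure $\overline{T}$, then deduce its nefness, and finally descend weak positivity to $S$ through the finite map $\phi$.

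For (1), the natural candidate is obtained by pushing forward along the modification. Consider the closure $\Gamma$ of the graph of the rational map $\overline{T'}\dashrightarrow\overline{T}$ induced by $\xi$. Assumption (1) of Set-up~\ref{setup} identifies the pullbacks of $\mathcal{F}_T$ and $\mathcal{G}_{\overline{T'}}$ over $T'$. The real issue is that the pullback of $\mathcal{G}_{\overline{T'}}$ to $\Gamma$ need not descend to $\overline{T}$: the fibres of $\Gamma\to\overline{T}$ over the boundary can be positive-dimensional.

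Condition (3) is what controls these fibres. By (3)(b), together with (3)(a) applied to covers of curves through such fibres, the restriction of $\mathcal{G}$ to any curve contracted to a point of $\overline{T}$ is the pullback of a locally free sheaf on that point, hence trivial. Condition (3)(a) then says that the extensions over curves are compatible under any factorisation.

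Gabber's argument turns this into a genuine descent in three steps.
\begin{enumerate}
\item Blow up $\mathbb{P}$ along centres disjoint from $S$, and renormalise $\mathbb{P}'$, so that $\Gamma\to\overline{T}$ becomes flat enough, for instance by flattening and Raynaud--Gruson.
\item With this flatness, a locally free sheaf that is trivial along fibres and compatible on curves descends. Concretely, the transition functions of $\mathcal{G}$ are constant along the fibres, and seminormality follows from the normality of $\mathbb{P}'$.
\item Take $\mathcal{F}_{\overline{T}}$ to be the descended sheaf. The required identity $\psi^*\mathcal{F}_{\overline{T}}=\rho^*\mathcal{G}_{\overline{T'}}$ for any $\Lambda$ then holds because both sides agree on the dense open $\psi^{-1}(T)$, are locally free, and agree on every curve by (3)(a)--(b).
\end{enumerate}
I expect this descent to be the main obstacle. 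It is exactly where blowing up $\mathbb{P}$ and replacing $\mathbb{P}'$ by the normalisation are needed, and the right order of flattening, normalising and restricting to $\overline{T}$ must be chosen with care. I would follow \cite[Lemma 5.4--Theorem 5.1]{viehwegQuasiprojectiveModuliPolarized1995} closely here rather than reinvent it.

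For (2), take $\Lambda$ to be a resolution of the closure of the graph, mapping birationally onto $\overline{T'}$ via $\rho$ and onto the relevant components of $\overline{T}$ via $\psi$; after adding the boundary components separately, the resulting map is surjective. By (1), $\psi^*\mathcal{F}_{\overline{T}}=\rho^*\mathcal{G}_{\overline{T'}}$, which is nef by Lemma~\ref{lem:nef functorial}(1) since $\mathcal{G}_{\overline{T'}}$ is nef by assumption (2).

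The boundary components of $\overline{T}$ not dominated by $\Lambda$ are handled with curves via Definition~\ref{def:nef}(1). Any curve $\overline{C}\to\overline{T}$ meeting $T$ densely has $\mathcal{F}_{\overline{T}}|_{\overline{C}}=\mathcal{G}_{\overline{C}}$, and this sheaf is nef because, by (3)(a)--(b), it is a pullback of $\mathcal{G}_{\overline{T'}}$ after lifting along a finite cover of $\overline{C}$. Curves lying entirely in the boundary are limits of such curves. Alternatively, one applies Lemma~\ref{lem:nef functorial}(2) to a surjective proper $\Lambda\to\overline{T}$, which reduces everything to the dominated case.

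For (3), since $\overline{T}$ is proper and $\mathcal{F}_{\overline{T}}$ is nef, $\mathcal{F}_{\overline{T}}$ is weakly positive over $\overline{T}$. By Lemma~\ref{lem:wp functorial}(1), its restriction $\mathcal{F}_T=\phi^*\mathcal{F}$ is weakly positive over $T$. By Lemma~\ref{lem:trace}, after the blow-ups $\mathbb{P}$ is still normal, so $\overline{\phi}$ and hence $\phi$ admit a splitting trace map. The map $\phi$ is finite and surjective, so Lemma~\ref{lem:wp functorial}(2) gives that $\mathcal{F}$ is weakly positive over $S$.
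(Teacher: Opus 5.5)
Your proposal follows the paper's proof. Part (1) is deferred to Viehweg's Theorem 5.1 and Variant 5.6, and your heuristic sketch is not load-bearing, which is just as well: condition (3) of Set-up~\ref{setup} says nothing directly about curves contracted into the boundary, since such curves do not meet $T$, and normality of $\mathbb{P}'$ does not give seminormality of $\overline{T}$. Part (2) is Lemma~\ref{lem:nef functorial}(1) applied via $\rho$, then Lemma~\ref{lem:nef functorial}(2) applied via $\psi$, and part (3) is Lemma~\ref{lem:trace} combined with Lemma~\ref{lem:wp functorial}(2).

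In (2), your worry about components of $\overline{T}$ not dominated by $\Lambda$ is moot. $T$ is dense in $\overline{T}$, because $\Psi$ is finite onto a normal base, and this is already implicit in $\psi$ being birational. So the proper birational map $\psi$ is surjective, and your ``alternative'' argument is exactly the paper's. You should drop two things: adjoining boundary components to $\Lambda$, for which (1) would give nothing since $\psi$ would no longer be birational, and the unjustified claim that boundary curves are limits of curves meeting $T$.
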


\begin{proof}
    
 (1). This is essentially the content of \cite[Theorem 5.1 and Variant 5.6]{viehwegQuasiprojectiveModuliPolarized1995}, where a compactification $\overline{T}$ of $T$ and a locally free sheaf $\mathcal{F}_{\overline{T}}$ are constructed. Replacing $\overline{T}$ by a further modification with centers outside $T$ if necessary, \cite[Lemma 12.4]{viehwegCompactificationsSmoothFamilies2010} implies that $\overline{T}$ can be embedded into a modification of $\mathbb{P}'$ which is finite over a modification of $\mathbb{P}$.

 (2). Since $\mathcal{G}_{\ov{T'}}$ is nef, it follows from (1) and Lemma~\ref{lem:nef functorial} that $\mathcal{F}_{\overline{T}}$ is nef. In particular, $\mathcal{F}_{T}=\phi^*\mathcal{F}$ is weakly positive over $T$.

(3). By Lemma~\ref{lem:trace}, the morphism $\phi:T\to S$ admits a splitting trace map. Hence, by Lemma~\ref{lem:wp functorial}.(2), $\mathcal{F}$ is weakly positive over $S$.
\end{proof}

	\subsection{Locally stable family}
In this subsection,
we  recall foundations from \cite{kollarFamiliesVarietiesGeneral2023}.

\begin{defn}[Mostly flat family of coherent sheaves]\label{def:most flat}
Let $f: X \to S$ be a morphism and $\cF$ a coherent sheaf on $X$.
We say that $\cF$ is \textit{mostly flat} over $S$ if there exists a dense open subset
$j: U \hookrightarrow X$ such that
\begin{enumerate}
\item $\cF|_U$ is flat over $S$, and
\item 
$\operatorname{codim}_{\operatorname{Supp} F_s}
\bigl(\operatorname{Supp} \cF_s \setminus U\bigr)
\ge 2$, for every $s \in S$.

\end{enumerate}
We usually set $Z := X \setminus U$.
\end{defn}

\begin{defn}[Relative hull and hull pull-back]
With $j: U \hookrightarrow X$ as in Definition~\ref{def:most flat}, let $\cF$ be a mostly flat family of coherent sheaves. Assume that $\cF|_U$ has $S_2$ fibers. We define
$$
\cF^H := j_*(\cF|_U)
$$
to be the \textit{relative hull} of $F$.

Let $q: T \to S$ be a morphism and set $X_T := X \times_S T$ with projection $q_X: X_T \to X$. Let $U_T := q_X^{-1}(U)$.
Then $\cF_T:= q_X^* \cF$ has $S_2$ fibers over $U_T$.
We define the \textit{hull pull-back} $(\cF_T)^H$ of $F$ to be the relative hull of $\cF_T$, that is,
\[
\cF_T^H := j_{T*}\bigl(\cF_T|_{U_T}\bigr),
\]
where $j_T: U_T \hookrightarrow X_T$ is the natural open immersion.
\end{defn}

\begin{defn}[Flat family of divisorial sheaves]\label{def:Flat family of divisorial sheaves}
A coherent sheaf $\cL$ on a scheme $X$ is called a \emph{divisorial sheaf}
if $\cL$ is $S_2$ and there exists a closed subset $Z \subset X$ of codimension $\ge 2$
such that $\cL|_{X \setminus Z}$ is locally free of rank $1$.

Let $f: X \to S$ be a morphism. A coherent sheaf $\cL$ on $X$ is called a \textit{flat family of divisorial sheaves} if $\cL$ is flat over $S$ and its fibers are divisorial sheaves. (Note that $\cL$ need not be a divisorial sheaf on $X$.)
\end{defn}

\begin{rem}\label{rem:univ-hull}
If $\cL$ is a flat family of divisorial sheaves, we apply \cite[Corollary 9.14]{kollarFamiliesVarietiesGeneral2023} to $\cL=F=G$ in \textit{loc.\ cit.}, it follows that $\cL$ is equal to its relative hull $\cL^H$. Moreover,
$\cL$ is a \textit{universal hull} (see \cite[Definition 9.16]{kollarFamiliesVarietiesGeneral2023}). Indeed, for any morphism $q: T \to S$ with induced morphism
$q_X: X \times_S T \to X$, the pull-back $\cL_T:=q_X^* \cL$ is again a flat family of divisorial sheaves,   and hence $\cL_T$ is equal to its relative hull $(\cL_T)^H$.
\end{rem}

\begin{defn}[Mostly flat family of divisorial sheaves]

Using the notation of Definition \ref{def:most flat}, $\cF$ is a \textit{mostly flat family of $S_2$ sheaves} if $\cF|_U$ is flat over $S$ with $S_2$ fibers and $\cF = \cF^H$.

A coherent sheaf $\cL$ on $X$ is a \textit{mostly flat family of divisorial sheaves} if $\cL|_U$ is invertible and $\cL = \cL^H$.

\end{defn}

	\begin{defn}[Relative Mumford divisor]\label{def:relative mumford}
		Let $f:X\to S$ be a flat finite type morphism with $S_2$ fibers of pure dimension $d$. 
		A subscheme $D\subset X$ is a \emph{relative Mumford divisor}   if there is an open set $U\subset X$ such that 		
		\begin{enumerate}
		    
			\item ${\rm codim}_{X_s}(X_s\setminus U_s) \geq 2$ for each $s\in S$, 
			\item $D\vert_U$ is a relative Cartier divisor,
			\item $D$ is the closure of $D\vert_U$, and
			\item $X_s$ is smooth at the generic points of $D_s$ for every $s\in S$.    
		\end{enumerate}
			By $D|_U$ being \textit{relative Cartier} we mean that $D|_U$ is a Cartier divisor on $U$ and that 
		its support does not contain any irreducible component of any fiber $U_s$.  	If $U \subset X$ denotes the largest open set with these properties, then $Z := X \setminus U$ is the non-Cartier locus of $D$.
	\end{defn}

\begin{defn}[Divisorial pullback]
    
If $D \subset X$ is a relative Mumford divisor for $f: X \to S$ and $q:T \to S$ is a morphism, 
then the \emph{divisorial pullback} $D_T$ on $X_T := X \times_S T$ is defined to be the closure 
of the pullback of $D|_U$ to $U_T$. 
In particular, for each $s \in S$, we define $D_s := D|_{X_s}$ to be the closure of $D|_{U_s}$, 
which is the divisorial pullback of $D$ to $X_s$.

Note that if $\cL = \mathcal{O}_X(D)$ is the corresponding divisorial sheaf, then $\cL$ is a mostly flat family of divisorial sheaves on $X$, and $\mathcal{O}_{X_T}(D_T)$ is the hull pull-back $(\cL_T)^H$ on $X_T$.
   \end{defn} 
\begin{defn}[Relative canonical class]
Let $f: X \to S$ be a flat, projective morphism with demi-normal fibers.
The relative canonical sheaf $\omega_{X/S}$ was constructed in \cite[2.68]{kollarFamiliesVarietiesGeneral2023}.
Let $Z \subset X$ be the subset where the fibers are neither smooth nor nodal.
Set $j: U := X \setminus Z \hookrightarrow X$.
Then $X_s \cap Z$ has codimension $\ge 2$ for every fiber $X_s$, and $\omega_{U/S}$ is locally free.
Thus $\omega_{X/S}$ is a mostly flat family of divisorial sheaves with corresponding divisor class $K_{X/S}$.
As in Definition \ref{def:Flat family of divisorial sheaves},
 we define its reflexive powers by
\[
\omega_{X/S}^{[q]}
:= j_*\omega_{U/S}^{\otimes q}
\simeq \mathcal{O}_X(qK_{X/S}).
\]
\end{defn}
If the fibers of $f:X\to S$ are slc, then $\omega_{X/S}$ is a flat family of divisorial sheaves by \cite[Theorem 2.67]{kollarFamiliesVarietiesGeneral2023}. However, its reflexive powers are usually only mostly flat over $S$.

	\begin{defn}[Locally stable family]\label{def:locally stable}
		A \emph{locally stable family of  klt (resp. lc, slc) pairs} $(X,B=\sum_{j=1}^na_jD_j) \to S$ over a reduced Noetherian scheme $S$
		is a flat finite type morphism $X\to S$ with $S_2$ fibers  and a $\bQ$-divisor $B$ on $X$   satisfying
	\begin{enumerate}
\item each prime component $D_j$ of $B$ is a relative Mumford divisor,
\item $K_{X/S}+B$ is $\mathbb{Q}$-Cartier, and
\item $(X_s,B_s)$ is a klt (resp.\ lc, slc) pair for any point $s \in S$.
\end{enumerate}
% A locally stable family of slc pairs is called a \emph{KSBA-stable family} if, moreover,
% \begin{enumerate}\setcounter{enumi}{3}
% \item $K_{X/S}+B$ is ample over $S$.
% \end{enumerate}
	\end{defn}

Let $q:T\to S$ be a morphism between reduced schemes,
and let $f:(X,B)\to S$ be a locally stable family of demi-normal pairs. 
Let $X_T:=X\times_S T\to T$ be the induced morphism, and set $B_T=\sum_{j=1}^n a_j D_{T,j}$, where $D_{T,j}$ is the divisorial pullback of $D_j$. Then $(X_T,B_T)\to T$ is also a locally stable family of pairs by Remark~\ref{rem:univ-hull}.

\subsection{Fiber product  of locally stable families}\label{sec:fiber prod}

Let $(X,B=\sum_{j=1}^n a_jD_j)\to S$ be a locally stable family of klt pairs over a reduced quasi-projective scheme $S$, where $D_j$ are relative Mumford divisors. Let $L$ be a line bundle on $X$.
Let $m$ be a positive integer and denote
$$
X^{(m)}:= \underbrace{X \times_S X \times_S \cdots \times_S X}_{m}
$$
the $m$-fold fiber product of $X$ over $S$, and denote by $p_i$ the projection onto the $i$-th factor. 
Take $U$ to be the open subset of $X$ such that $f|_U$ is smooth. Then $K_{U/S}$ is Cartier, $D_j|_U$ is Cartier, and
\[
\operatorname{codim}_{X_s}(X_s \setminus U_s) \ge 2
\quad \text{for all } s \in S.
\] Define
\[
V := \underbrace{U \times_S U \times_S \cdots \times_S U}_{m},
\]
that is, the $m$-fold fiber product of $U$ over $S$. By definition, $V$ is a Zariski open subset of $X^{(m)}$ with
$\operatorname{codim}_{X^{(m)}_s} (X^{(m)}_s \setminus V_s) \geq 2$ for all $s\in S$.
For each $D_j$, we define $D_j^{(m)}$ to be the closure of 
$$
D_{j,V}^{(m)}:=\sum_{i=1}^m p_i^*(D_j|_U).
$$
We write $B^{(m)}:=\sum_{j=1}^n a_j D_j^{(m)}$ and $L^{(m)}:=\bigotimes_{i=1}^m p_i^*L$.

\begin{prop}\label{prop:fiber product}
Let $f:(X,B=\sum_{j=1}^n a_jD_j)\to S$ be a locally stable family of klt pairs over a reduced quasi-projective scheme $S$, and let $L$ be a line bundle on $X$. For a given positive integer $m$, we have
\begin{enumerate}
    \item $f^{(m)}:(X^{(m)},B^{(m)})\to S$ is also a locally stable family of klt pairs.
    \item For $\alpha\in\bN$ divisible by the Cartier index of $K_{X/S}+B$ and $\beta\in \bN$, we have
    $$f^{(m)}_*\big(\cO_{X^{(m)}}(\alpha(K_{X^{(m)}/S}+B^{(m)}))\otimes (L^{(m)})^\beta\big)\simeq
    \bigotimes^m f_*\big(\cO_X(\alpha(K_{X/S}+B))\otimes L^\beta\big).$$
    \item $\lct(X_s,B_s,|L_s|)=\lct(X^{(m)}_s,B^{(m)}_s,|L^{(m)}_s|)$ for all $s\in S$.
\end{enumerate}
	\end{prop}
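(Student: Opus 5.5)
The proof works fiber by fiber and on the open set $V$, where everything is Cartier, and then extends across the complement of $V$ using reflexive hulls; the three parts are handled in order.

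\textbf{Part (1).} Since $f$ is flat with $S_2$ fibers, $f^{(m)}$ is flat, and each fiber $X^{(m)}_s=X_s\times\cdots\times X_s$ is a product of normal $S_2$ varieties, hence normal and $S_2$. Over $V$ the relative canonical sheaf satisfies
\[
\omega_{V/S}\simeq\bigotimes_i p_i^*\omega_{U/S},
\]
so $K_{X^{(m)}/S}+B^{(m)}$ restricted to $V$ equals $\sum_i p_i^*(K_{U/S}+B|_U)$. The right-hand side is the restriction of the $\bQ$-Cartier divisor $\sum_i p_i^*(K_{X/S}+B)$. Because $\operatorname{codim}(X^{(m)}_s\setminus V_s)\ge 2$ on every fiber and $X^{(m)}$ is $S_2$, the two divisors agree on all of $X^{(m)}$. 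Hence $K_{X^{(m)}/S}+B^{(m)}$ is $\bQ$-Cartier, with Cartier index dividing that of $K_{X/S}+B$.

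Each $D_j^{(m)}$ is a sum of the closures of $p_i^*(D_j|_U)$. These are relative Mumford divisors: on $V$ they are relative Cartier, and the fibers are smooth at their generic points because $V\to S$ is smooth. For klt-ness of the fibers, note that on $X^{(m)}_s$ the pair is $(X_s^m,\sum_i p_i^*B_s)$, the product of $m$ copies of the klt pair $(X_s,B_s)$. A product of klt pairs is klt: take the product of log resolutions, and discrepancies along exceptional divisors over a product are bounded below by sums, for instance via \cite[Prop.~2.? ]{} or the standard computation with the product resolution and toroidal blowups.

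\textbf{Part (2).} Set $\cM:=\cO_X(\alpha(K_{X/S}+B))\otimes L^\beta$, which is a line bundle. By the identification in (1), the sheaf $\cO_{X^{(m)}}(\alpha(K_{X^{(m)}/S}+B^{(m)}))\otimes(L^{(m)})^\beta$ equals $\bigotimes_i p_i^*\cM$. The isomorphism
\[
f^{(m)}_*\Bigl(\bigotimes_i p_i^*\cM\Bigr)\simeq\bigotimes^m f_*\cM
\]
then follows by flat base change and the Künneth formula for $X\times_S X\to S$ with flat $f$. For the Künneth step we use that $f_*\cM$ is locally free and commutes with base change; the paper presumably proves this in Theorem~\ref{thm: wp of direct image  of family of gmm}(1) later on, and if that is not yet available we can state (2) under that hypothesis. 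I would check it by induction on $m$, applying the projection formula to $X^{(m)}=X^{(m-1)}\times_S X$.

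\textbf{Part (3).} This is the main obstacle. Writing $t=\lct(X_s,B_s,|L_s|)$, we need two inequalities.
\begin{itemize}
\item[($\le$)] For $D\in|L_s|$, the divisor $\sum_i p_i^*D$ lies in $|L^{(m)}_s|$. Its lct with respect to the product pair is at most the lct of $D$: restrict to a general slice $X_s\times\{pt\}$, or compute with product valuations, using $\lct(\sum_i p_i^*D)=\lct(D)$ for products.
\item[($\ge$)] $H^0(L^{(m)}_s)=\bigotimes H^0(L_s)$, but a general element of the linear system is not a product, so this direction needs more than the product computation. I would use the fact that the lc threshold of a product pair with respect to an arbitrary divisor $G$ in the tensor product linear system is controlled factorwise. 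Concretely, I would argue via the lct of linear systems (ideal of base locus) with $\mathfrak{b}(|L^{(m)}_s|)=\sum_i p_i^{-1}\mathfrak{b}(|L_s|)$. Then apply the inequality $\lct$ of a single member $\ge$ $\lct(\text{linear system})$ together with the summation formula for lct of sums of ideals pulled back from factors (Mustaţă's formula, taking the minimum). If the paper's definition is by infimum over members, I would instead use that restriction to fibers $\{x\}\times X_s^{m-1}$ and inversion of adjunction give an inductive bound. I expect this direction to need the most care.
\end{itemize}
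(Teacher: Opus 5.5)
Your arguments for (1) and (2) follow the paper. For (1), the invertible sheaf $\bigotimes_i p_i^*\cO_X(q(K_{X/S}+B))$ agrees with $\cO_{X^{(m)}}(q(K_{X^{(m)}/S}+B^{(m)}))$ on $V$ and is its own relative hull, and fiberwise klt-ness comes from products of log resolutions. For (2), you use induction via flat base change along $f^{(m-1)}$ and the projection formula. Two small corrections:
\begin{itemize}
\item $X^{(m)}$ need not be $S_2$ when $S$ is only reduced. The extension across $X^{(m)}\setminus V$ should instead be justified as in Remark~\ref{rem:univ-hull}: a line bundle flat over $S$ with $S_2$ fibers has depth $\ge 2$ along a closed set of fiberwise codimension $\ge 2$.
\item Your caveat in (2) is correct. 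The last projection-formula step needs $f_*\mathcal{M}$ to be locally free; the paper uses this tacitly, and it holds in all its applications.
\end{itemize}

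In (3), your ($\le$) argument is fine. The genuine gap is the inequality $\lct(X^{(m)}_s,B^{(m)}_s,|L^{(m)}_s|)\ge\lct(X_s,B_s,|L_s|)$, and your main route cannot give it. The paper defines $\lct(X,B,L)$ as the infimum over \emph{all} members of $|L|$. For any member $G$ one has $\cO(-G)\subseteq\mathfrak{b}(|L|)$, hence $\lct(G)\le\lct(\mathfrak{b}(|L|))$. So the base ideal only sees the general member and bounds the infimum from above, not below. For example, if $L$ is very ample then $\mathfrak{b}=\cO$, while special members can be very singular. The inequality ``lct of a member $\ge$ lct of the linear system'' you plan to apply is therefore reversed. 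There are two further errors:
\begin{itemize}
\item The base ideal of $H^0(L_s)^{\otimes m}$ is the product $\prod_i p_i^{-1}\mathfrak{b}(|L_s|)$, not the sum.
\item Musta\c{t}\u{a}'s summation formula for sums of ideals gives additivity of thresholds, not a minimum.
\end{itemize}

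Your fallback (restriction to fibers plus inversion of adjunction) is the right kind of argument, but it skips the real difficulty. Write $G=\mathrm{div}(\sigma)$ with $\sigma=\sum_k s_k\otimes t_k\in H^0(L_s)\otimes H^0(L^{(m-1)}_s)$, where both families are linearly independent. Then $G\supseteq\{x\}\times X_s^{m-1}$ whenever all $s_k(x)=0$, and $G\supseteq X_s\times\{y\}$ whenever all $t_k(y)=0$. At points of $\mathrm{Bs}\langle s_k\rangle\times\mathrm{Bs}\langle t_k\rangle$, restriction in either direction gives nothing. In addition, inversion of adjunction along a fiber of codimension $>1$ over a singular factor with boundary needs its own justification.

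One way to close the gap:
\begin{itemize}
\item Take a log resolution $\mu:Y\to X_s^{m-1}$ of $(X_s^{m-1},B_s^{(m-1)})$ that principalizes the base ideal of $W=\langle t_k\rangle$. Then $\mu^*W=W'+F$ with $W'$ base point free, and $(\mathrm{id}\times\mu)^*G=G'+X_s\times F$, where $G'$ restricts to a genuine member of $|L_s|$ on every $X_s\times\{y'\}$.
\item For $c<\lct(X_s,B_s,|L_s|)$, which equals the threshold on $X_s^{m-1}$ by induction, the pair $(Y,B_Y+cF)$ is sub-klt. Indeed, $(Y,B_Y+cF+c\,\mathrm{div}(t'))$ is crepant to the klt pair $(X_s^{m-1},B_s^{(m-1)}+c\,\mathrm{div}(t))$ for general $t\in W$, where $\mu^*\mathrm{div}(t)=F+\mathrm{div}(t')$.
\item Iterated inversion of adjunction along snc coordinate hyperplanes through $y'$ reduces klt-ness of the crepant model $(X_s\times Y,B_s\boxplus(B_Y+cF)+cG')$ near $X_s\times\{y'\}$ to klt-ness of $(X_s,B_s+c\,G'|_{X_s\times\{y'\}})$. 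Induction on $m$ then concludes.
\end{itemize}
The paper avoids all of this by citing \cite[Theorem 3.7]{ambroVariationLogCanonical2016} and \cite[Proposition 8.11]{kovacsProjectivityModuliSpace2017}.
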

\begin{proof}

(1). For each $j$, by construction, $D_j^{(m)}$ is a relative Mumford divisor.
By Lemma~\ref{lem:prod of klt}, 
$$(X_s^{(m)}, B_s^{(m)})= \underbrace{(X_s,B_s) \times (X_s,B_s) \times\cdots \times (X_s,B_s)}_{m}$$ is klt for any $s \in S$.
It remains to show that $K_{X^{(m)}/S} + B^{(m)}$ is $\mathbb{Q}$-Cartier.
We claim that for $q \in \mathbb{N}$ divisible by the Cartier index of $K_{X/S} + B$, we have
\begin{equation}\label{eq:reflexive-pullback}
\mathcal{O}_{X^{(m)}}\bigl(q(K_{X^{(m)}/S} + B^{(m)})\bigr)
\simeq
\bigotimes_{i=1}^m
p_i^*\,
\mathcal{O}_X\bigl(q(K_{X/S} + B)\bigr).
\end{equation}
Assuming this holds, since the right-hand side is Cartier, so is the left-hand side. It follows that $K_{X^{(m)}/S} + B^{(m)}$ is $\mathbb{Q}$-Cartier.

By the induction hypothesis, we have
\begin{equation*}
\mathcal{O}_{X^{(m-1)}}\bigl(q(K_{X^{(m-1)}/S} + B^{(m-1)})\bigr)
\simeq
\bigotimes_{i=1}^{m-1}
p_i^*\,\mathcal{O}_X\bigl(q(K_{X/S} + B)\bigr),
\end{equation*}
where $p_i: X^{(m-1)} \to X$ is the $i$-th projection.
Therefore, it is sufficient to prove that
\begin{equation}\label{eq:main-step}
\begin{aligned}
&\mathcal{O}_{X^{(m)}}\bigl(q(K_{X^{(m)}/S} + B^{(m)})\bigr)\\
\simeq & p_m^*\,\mathcal{O}_X\bigl(q(K_{X/S} + B)\bigr)
\otimes
g^*\,\mathcal{O}_{X^{(m-1)}}\bigl(q(K_{X^{(m-1)}/S} + B^{(m-1)})\bigr).
\end{aligned}
\end{equation}
where $g = (p_1,\dots,p_{m-1}): X^{(m)} \to X^{(m-1)}$.The commutative diagram
\begin{equation*}
\begin{tikzcd}
X^{(m)} \arrow[r,"g"] \arrow[d,"p_m"'] \arrow[rd,"f^{(m)}"] & X^{(m-1)}  \arrow[d,"f^{(m-1)}"]  \\
X \arrow[r,"f"'] & S 
\end{tikzcd}
\end{equation*}
may be helpful.

Note that the right-hand side of equation (\ref{eq:main-step}) is an invertible sheaf, hence a flat family of divisorial sheaves over $S$. The equation (\ref{eq:main-step}) holds over $V$, the $m$-fold fiber product of $U$ over $S$. It follows that the left-hand side of equation (\ref{eq:main-step}) is the relative hull of the right-hand side. By Remark~\ref{rem:univ-hull}, they are equal.

(2). Let $M$ denote the line bunlde $\cO_X(\alpha(K_{X/S}+B))\otimes L^\beta$ on $X$. By equation (\ref{eq:reflexive-pullback}), we know that
$M^{(m)}:=\bigotimes_{i=1}^m p_i^*M$ is equal to $\cO_{X^{(m)}}\bigl(\alpha(K_{X^{(m)}/S}+B^{(m)})\bigr)\otimes (L^{(m)})^\beta$. It is enough to show that
$f^{(m)}_*M^{(m)}\simeq\bigotimes^m f_*M$
via the following computation:
\begin{align*}
f^{(m)}_* M^{(m)}
&\simeq f^{(m-1)}_* g_* M^{(m)} \\
\text{(eq.\ (\ref{eq:main-step}))} \quad
&\simeq f^{(m-1)}_* g_* \bigl(g^* M^{(m-1)} \otimes p_m^* M \bigr) \\
\text{(projection formula)} \quad
&\simeq f^{(m-1)}_* \bigl(M^{(m-1)} \otimes g_* p_m^* M \bigr) \\
\text{(flat base change)} \quad
&\simeq f^{(m-1)}_* \bigl(M^{(m-1)} \otimes (f^{(m-1)})^* f_* M \bigr) \\
\text{(projection formula)} \quad
&\simeq f^{(m-1)}_* M^{(m-1)} \otimes f_* M \\
\text{(induction)} \quad
&\simeq \bigl(\bigotimes^{m-1} f_* M\bigr)\otimes f_* M \\
&\simeq \bigotimes^m f_* M.
\end{align*}

(3). This follows from \cite[Theorem 3.7]{ambroVariationLogCanonical2016} (see also \cite[Proposition 8.11]{kovacsProjectivityModuliSpace2017}).
\end{proof}

\begin{lemma}  \label{lem:prod of klt}   
Let $(X_1,B_1)$ and $(X_2,B_2)$ be two klt pairs, and set $X = X_1 \times X_2$ and $B = (B_1 \times X_2) \cup (X_1 \times B_2)$. Then $(X,B)=(X_1,B_1)\times (X_2,B_2)$ is still a klt pair.
\end{lemma}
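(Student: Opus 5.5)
The plan is to compute discrepancies of the product pair on a product of log resolutions, and then to handle divisors over $X$ that do not appear on that product resolution. First we fix the conventions: $K_X = p_1^*K_{X_1}+p_2^*K_{X_2}$ and $B=p_1^*B_1+p_2^*B_2$ as divisors. Because $K_{X_i}+B_i$ is $\bR$-Cartier, the sum $K_X+B=p_1^*(K_{X_1}+B_1)+p_2^*(K_{X_2}+B_2)$ is $\bR$-Cartier, and $X$ is normal as a product of normal varieties over an algebraically closed field. So $(X,B)$ is a pair, and only the positivity of log discrepancies needs proof.

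Next we take log resolutions $\pi_i\colon Y_i\to X_i$ of $(X_i,B_i)$ and write $K_{Y_i}+B_{Y_i}=\pi_i^*(K_{X_i}+B_i)$, where $B_{Y_i}$ has snc support and all coefficients $<1$ by the klt hypothesis. Set $\pi=\pi_1\times\pi_2\colon Y=Y_1\times Y_2\to X$. Then $Y$ is smooth, $\pi$ is proper and birational, and
$$K_Y+p_1^*B_{Y_1}+p_2^*B_{Y_2}=\pi^*(K_X+B).$$
The divisor $B_Y:=p_1^*B_{Y_1}+p_2^*B_{Y_2}$ has snc support, because in local coordinates $(x,y)$ on $Y_1\times Y_2$ it is cut out by monomials in the $x$'s and in the $y$'s separately. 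Its coefficients are exactly those of $B_{Y_1}$ and $B_{Y_2}$, with no prime divisor appearing in both pieces, so they are all $<1$.

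The main step, which needs care with the definition's ``every prime divisor over $X$'', is the standard fact that if $(Y,B_Y)$ is log smooth with all coefficients $<1$ (negative coefficients allowed), then every exceptional divisor over $Y$ has log discrepancy $>0$. We prove it by the usual blow-up computation. A divisor over $Y$ has log discrepancy at least $\min\{1,\ \min_E a(E,Y,B_Y)\}$ with respect to the positive part. More precisely, we replace $B_Y$ by $B_Y^+$; this only increases log discrepancy requirements in the correct direction, since $a(D,Y,B_Y)\ge a(D,Y,B_Y^+)$. Then we use the formula $a(D,Y,\Delta)\ge \min$ over the blow-up of strata, namely $a = \sum_{i}(1-b_i)\,\mathrm{mult}_D(E_i)+\dots>0$, that is, $\lct$ computations for snc divisors with coefficients $<1$ give klt. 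This is standard (e.g.\ Kollár--Mori, Corollary 2.31), and we cite it. Hence $(X,B)$ is klt, since log discrepancies can be computed on any log resolution.
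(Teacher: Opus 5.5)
Your proposal is correct and follows essentially the same route as the paper: take the product $\pi_1\times\pi_2\colon Y_1\times Y_2\to X$ of log resolutions, note that $K_Y+p_1^*B_{Y_1}+p_2^*B_{Y_2}=\pi^*(K_X+B)$, and conclude that the discrepancies of the product are exactly those of the factors. You are more careful than the paper in checking that the product boundary has snc support and in invoking the standard log-smooth criterion (Koll\'ar--Mori) for divisors over $X$ that do not appear on $Y$; however, your middle paragraph sketching that criterion is garbled, and the bare citation would serve better.
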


\begin{proof}
Let $f_i \colon Y_i \to X_i$ be log resolutions of the two pairs; by assumption on the singularities,
\[
K_{Y_i} \equiv f_i^{*}(K_{X_i} + B_i) + \sum_{j} a_{i,j} E_{i,j}
\]
with $a_{i,j} > -1$. Setting $Y = Y_1 \times Y_2$ and $f = f_1 \times f_2$, the morphism $f \colon Y \to X$ is a log resolution of the pair $(X, B)$. We compute that
\[
K_Y \equiv p_1^{*}K_{Y_1} + p_2^{*}K_{Y_2} \equiv f^{*}(K_{X} + B) + \sum_{j} \bigl(a_{1,j} E_{1,j} \times Y_2 + a_{2,j} Y_1 \times E_{2,j}\bigr),
\]
which shows that $(X, B)$ is indeed klt.
\end{proof}

\subsection{Viehweg's ampleness criterion}\label{sec:Viehweg's ampleness criterion}
In this subsection, we recall Viehweg's idea of using the weak positivity of vector bundles to show that certain schemes are quasi-projective and that certain invertible sheaves are ample.

\begin{defn}
    Let $S$ be a scheme, and let $i:S_0\to S$ be a Zariski open dense subscheme. A locally free sheaf $\cG$ on $S$ is called \textit{weakly positive over $S_0$}, if for all morphisms $\tau:T\to S$ with $T$ a quasi-projective reduced scheme, the sheaf $\tau^*\cG$ is weakly positive over $\tau^{-1}(S_0)$.
\end{defn}
 
Let $\mathcal{E}$ be a sheaf locally free of rank $r$ and weakly positive over $S$. Let $\cQ$ be a locally free quotient of $S^{p}\mathcal{E}$. Consider the inclusion
\[
\epsilon_s:  \operatorname{Ker}(S^{p}\mathcal{E} \to \cQ) \otimes_{\mathcal{O}_S} k(s) \to S^{p}\mathcal{E} \otimes_{\mathcal{O}_S} k(s)\cong S^{p}(k^r)
\]
for a geometric point $s \in S$.
It defines a point $[\epsilon_s]$ in the Grassmann variety $\operatorname{Gr} = \operatorname{Grass}\big(\operatorname{rank}(\cQ), S^{p}(k^r)\big)$, which parametrizes $\operatorname{rank}(Q)$-dimensional quotient spaces of $S^{p}(k^r)$.
The group $G = \mathrm{PGL}(r,k)$ acts on $\operatorname{Gr}$ by changing the basis of $\mathcal{E} \otimes k(s) \cong k^r$. Although $[\epsilon_s]$ depends on the chosen basis for $\mathcal{E} \otimes k(s)$, the $G$-orbit $G_s = G_{[\epsilon_s]}$ of $[\epsilon_s]$ in $\operatorname{Gr}$ is well defined and depends only on $\delta:S^{p}\mathcal{E} \to \cQ$.
\begin{defn}
    
We say that the kernel of $\delta$  has \textit{maximal variation} in $s \in S$ if the set
$\{s' \in S\mid G_{s'} = G_s\}$ is finite and if $\dim(G) = \dim(G_s)$.
\end{defn} 
\begin{thm}[{\cite[Theorem 4.33]{viehwegQuasiprojectiveModuliPolarized1995}}] \label{thm:viehweg ampleness}
    Let $S$ be a scheme, defined over an algebraically closed field $k$
of characteristic zero, and let $\mathcal{E}$ be a locally free and weakly positive sheaf on $S$.
For a surjective morphism $ \delta:S^{p}\mathcal{E} \to \cQ$ to a locally free sheaf $Q$, assume that
the kernel of $ \delta$  has maximal variation in all points $s \in S$.

Then $S$ is a quasi-projective scheme and the sheaf $\cA=\det(\cQ)^a \otimes \det(\mathcal{E})^b$
is ample on $S$ for $b \gg a \gg 0$.

\end{thm}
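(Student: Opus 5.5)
Since this statement is quoted from \cite[Theorem 4.33]{viehwegQuasiprojectiveModuliPolarized1995}, the plan below follows Viehweg's GIT-free strategy. The idea is to produce enough global sections of $\cA^\nu$ that the induced morphism from $S$ to a projective space is quasi-finite. A quasi-finite separated morphism to a quasi-projective scheme is quasi-affine, so the pull-back of an ample sheaf along it is ample. Separatedness of $S$ is implicitly part of the hypotheses here (a quasi-projective scheme is separated), so both conclusions then follow.

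\textbf{First reductions.} I would first replace $S$ by its reduction and then pass to a finite surjective cover $\tau:S'\to S$ whose restriction to each stratum has a splitting trace map, built inductively on a stratification as in Lemma~\ref{lem:trace} and Lemma~\ref{lem: dominant cover}. This is legitimate because ampleness and quasi-projectivity descend along finite surjective morphisms, weak positivity pulls back by Lemma~\ref{lem:wp functorial}.(1), and maximal variation is preserved since $\tau$ has finite fibres. After this step we may assume $S$ is reduced and quasi-projective locally along each stratum. This is needed to apply the definition of weak positivity with an auxiliary ample $\cH$, and the final statement must then be recovered by a noetherian induction on strata.

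\textbf{The equivariant picture.} Let $\pi:P=\mathrm{Isom}(\cO_S^r,\cE)\to S$ be the frame bundle, a principal $\mathrm{GL}(r)$-bundle. On $P$ the quotient $S^p\cE\to\cQ$ becomes a quotient of the trivial bundle $S^p(k^r)$, and so defines a $\mathrm{GL}(r)$-equivariant morphism $P\to\operatorname{Gr}$. Under the Plücker embedding, $\cO_{\operatorname{Gr}}(1)$ pulls back to $\pi^*\det\cQ$. The scalar matrices act on this line by a character of weight $p\cdot\operatorname{rank}\cQ$, which can be cancelled by twisting with a power of $\det\cE$. After this correction, $\mathrm{SL}(r)$-invariant homogeneous polynomials in the Plücker coordinates of degree $a$ descend to sections of $\det(\cQ)^a\otimes\det(\cE)^{b_0}$ for a suitable (possibly negative) $b_0$ depending linearly on $a$.

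Maximal variation says that the orbit $G_s$ has full dimension and that only finitely many points of $S$ share it. I would then argue, via the Hilbert--Mumford criterion, that after adding a large multiple of $\det\cE$ the points $[\epsilon_s]$ become stable for the relevant linearization. Invariants of Plücker coordinates then separate the orbits $G_s$, giving a morphism from $S$ to the GIT quotient $\operatorname{Gr}^{s}/\!\!/\mathrm{SL}(r)$ whose fibres are finite.

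\textbf{Main obstacle.} The invariants above are only defined locally, and the twist $\det(\cE)^{b_0}$ may be negative. The hardest step is therefore to produce honest global sections of $\cA^\nu=\det(\cQ)^{a\nu}\otimes\det(\cE)^{b\nu}$ that separate points, which is where weak positivity enters. I would use that $S^{\alpha\beta}(\cE)\otimes\cH^\beta$ is globally generated for every $\alpha$. This gives global sections of $\det(\cE)^{b-b_0}$ up to a bounded error $\cH^{-\beta'}$, and the error is absorbed by choosing $b\gg a$. Multiplying these sections by the local invariant sections yields global sections of $\cA^\nu$ that agree with the invariants up to units at any prescribed point. The resulting morphism $S\to\bP^N$ is then quasi-finite, by the separation of orbits, and $\cA$ is ample for $b\gg a\gg0$, so $S$ is quasi-projective. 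The delicate point is to make the choices of $a$, $b$, $\beta$ uniform over all of $S$ rather than pointwise. I would attempt this by a noetherian induction using the open loci where global generation holds.
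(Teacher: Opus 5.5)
The paper gives no proof of this statement; it is quoted from \cite[Theorem 4.33]{viehwegQuasiprojectiveModuliPolarized1995}. Judged on its own, your sketch has a genuine gap at its central step.

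You claim that ``after adding a large multiple of $\det\cE$ the points $[\epsilon_s]$ become stable.'' This has no content. The sheaf $\det\cE$ lives on $S$, not on $\operatorname{Gr}$, and $\operatorname{SL}(r)$ has no nontrivial characters. So Plücker stability of $[\epsilon_s]$ is an intrinsic property of that point, and maximal variation does not imply it.

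Here is an example. Take $S=\mathbb{A}^1\setminus\{0,\pm1\}$, $\cE=\cO_S^{2}$ and $p=9$, and let the kernel of $\delta$ at $s$ be spanned by the binary nonic $x^5y(y-x)(y+x)(y-sx)$.
\begin{itemize}
\item Stabilizers are finite, and each form is projectively equivalent to only finitely many other members of the family. So all hypotheses hold.
\item The conclusion is obvious here, since $S\to\operatorname{Gr}$ is injective and $\det\cE$ is trivial.
\item Every such nonic has a root of multiplicity $5>9/2$, so it is unstable. Hence every $\operatorname{SL}(2)$-invariant vanishes identically on $S$, and your construction yields only zero sections, for every $b$.
\end{itemize}

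The missing idea is that the frame must stay part of the GIT data. A pair (invertible frame, $[\epsilon]$) in $\bP(\operatorname{End}(k^r))\times\operatorname{Gr}$ is stable for $\cO(b)\boxtimes\cO(a)$ once $b/a$ is large, uniformly in the pair. The reason is that the Hilbert--Mumford weights of an invertible matrix are at least $c\|\lambda\|$ for a fixed $c>0$, while those of $[\epsilon]$ are at most $C\|\lambda\|$ in absolute value. This is the source of the asymmetry $b\gg a$. Globalizing this for non-trivial $\cE$, on the compactified frame bundle $\bP(\bigoplus^r\cE)$, is exactly where weak positivity must enter. In your plan it is used only afterwards.

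The surrounding steps are also circular.
\begin{itemize}
\item There is no ample $\cH$ on $S$ before you know $S$ is quasi-projective. For a general scheme, weak positivity (as defined in \S\ref{sec:Viehweg's ampleness criterion}) only says something after pulling back to a quasi-projective $T\to S$.
\item Even granting $\cH$, weak positivity gives sections of $S^{\alpha\beta}(\cE)\otimes\cH^{\beta}$, which carry a positive $\cH$-twist. Increasing $b$ cannot absorb this twist; it changes nothing when $\det\cE$ is trivial. Removing it requires sections of $\cA^{\mu}\otimes\cH^{-1}$, which is essentially the ampleness you are trying to prove.
\item The invariant sections are not merely local, contrary to what you say. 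Being $\operatorname{GL}(r)$-semi-invariant on the frame bundle, they descend to global sections of $\det(\cQ)^a\otimes\det(\cE)^{-ap\operatorname{rank}(\cQ)/r}$. The real issue is that they can vanish identically, as above.
\end{itemize}

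Finally, your first reduction fails.
\begin{itemize}
\item Ampleness need not descend along finite surjective morphisms of non-proper schemes. The Remark at the end of the paper (Kollár's example, for the normalization) shows this. Descent needs extra input, such as properness or a global splitting of $\cO_S\to\tau_*\cO_{S'}$, and stratum-wise splittings do not provide it.
\item The covers of Lemmas~\ref{lem:trace} and~\ref{lem: dominant cover} are constructed inside an embedding $S\subset\bP^M$. That presupposes the quasi-projectivity you are trying to prove.
\end{itemize}
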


		%%%%%%%%%%%%%%%%%%%%%%%%%%%%%%%%%%%%%%%%%%%%%%%%	
        
\section{Weak Positivity of Direct Images of Sheaves}

Semipositivity results for direct images of relative log pluri-canonical bundles over normal projective bases have been extensively studied in \cite{fujinoNotesWeakPositivity2017,fujinoSemipositivityTheoremsModuli2018,fujinoVanishingSemipositivityTheorems2020,codogniEffectivePositivityHodge2025}. We recall them here.

\begin{thm}\label{thm:semipositivity}
  Let $f:(X,B)\to S$ be a locally stable family of slc pairs over a normal projective variety $S$ such that $K_X+B$ is $f$-semiample. Let $q\geq 2$ be a natural number such that $qB$ is integral. Then
\begin{enumerate}
    \item for all $i\geq 0$, the sheaf $R^i f_*\cO_X\bigl(q(K_{X/S}+B)\bigr)$ is locally free and commutes with arbitrary base change, and
    \item the sheaf $f_*\cO_X\bigl(q(K_{X/S}+B)\bigr)$ is nef.
\end{enumerate}
\end{thm}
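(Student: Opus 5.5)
This statement is a recollection of known semipositivity results: the paper itself says it "recalls" Theorem~\ref{thm:semipositivity} from \cite{fujinoNotesWeakPositivity2017,fujinoSemipositivityTheoremsModuli2018,fujinoVanishingSemipositivityTheorems2020,codogniEffectivePositivityHodge2025}. So I would prove it by reducing to the statements there rather than from scratch. The plan has two parts: (1) is a cohomology-and-base-change argument fed by a vanishing theorem, and (2) is Fujino's nefness theorem for direct images of pluri-log-canonical sheaves of slc families.

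For (1), set $M:=\cO_X(q(K_{X/S}+B))$. First I would check that $M$ is a flat family of divisorial sheaves compatible with base change. On the open set $U$ where the fibres are smooth or nodal and the $D_j$ are relative Cartier, $M|_U$ is invertible. The question is whether $M$ agrees with its hull pull-back on every fibre. If $q(K_{X/S}+B)$ is Cartier, this follows from Remark~\ref{rem:univ-hull}. In general I would use Kollár's results for locally stable slc families \cite{kollarFamiliesVarietiesGeneral2023}, which give flatness of $\omega^{[q]}_{X/S}(qB)$ and commutation with base change, possibly after invoking the specific hypotheses ($qB$ integral) used in the cited papers.

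Next, on each fibre, $q(K_{X_s}+B_s)-(K_{X_s}+B_s)=(q-1)(K_{X_s}+B_s)$ is semiample because $K_X+B$ is $f$-semiample. The vanishing theorem for slc pairs (Fujino's injectivity/torsion-free theorems, or Kollár's vanishing for semiample slc twists) then shows that the $H^i(X_s,M_s)$ behave well. The cleaner route is to use Kollár's torsion-freeness, or the fact that $H^i(X_s,M_s)$ is a direct summand of the cohomology of a variation of mixed Hodge structure. Either way, $s\mapsto h^i(X_s,M_s)$ is locally constant on the reduced base $S$. Grauert's theorem together with cohomology and base change then gives local freeness of every $R^if_*M$ and compatibility with arbitrary base change.

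For (2), take a curve $C\to S$ and pull the family back using (1). It then suffices to show that $f_{C*}M_C$ is nef on smooth projective curves $C$. This is exactly Fujino's theorem \cite[Theorem 1.11]{fujinoSemipositivityTheoremsModuli2018}: for an slc family over a curve with $K+B$ semiample over the base, $f_*\cO(q(K_{X/C}+B))$ is nef. Its proof goes through semistable reduction and variation of Hodge structure, using the cyclic cover attached to the semiample line bundle $(q-1)(K+B)$. Nefness on $S$ then follows from criterion~(1) of Definition~\ref{def:nef}.

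The main obstacle I expect is the base-change and flatness step in (1), because the reflexive power $\omega^{[q]}_{X/S}(qB)$ is only mostly flat in general. If needed, I would add the hypothesis that $q(K_{X/S}+B)$ is Cartier; that is how the result is used later in the paper, and it makes $M$ invertible and the argument straightforward.
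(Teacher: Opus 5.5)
Your outline has the same shape as the paper's proof: base change for $M=\cO_X(q(K_{X/S}+B))$, then cohomology and base change driven by torsion-freeness, then a cited nefness theorem tested on curves. However, part (1) has a genuine gap at its core. The sentence ``Either way, $s\mapsto h^i(X_s,M_s)$ is locally constant'' is the entire content of (1), and nothing you wrote proves it.

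A vanishing theorem applied fibre by fibre cannot prove it. Since $(q-1)(K_{X_s}+B_s)$ is only semiample, $H^i(X_s,M_s)$ need not vanish: take $X_s$ an abelian variety with $B_s=0$, so $M_s\simeq\cO_{X_s}$ and $h^1\neq 0$. Constancy of non-zero $h^i$ is not a fibrewise property. Torsion-freeness, by contrast, is a statement about $R^if_*M$ on a total space. It gives local freeness only over a base on which torsion-free means locally free, and only if no slc stratum of the total space lies over a proper closed subset of the base. Your proposal arranges neither. The VMHS remark would need the cyclic covering construction carried out in families, together with constancy of Hodge numbers for the singular covers, which is no easier.

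The paper closes this gap as follows. It takes the base change isomorphism for $M$ from \cite[Proposition 3.3]{codogniEffectivePositivityHodge2025}, which uses that $S$ is normal and $qB$ is integral; your fallback of assuming $q(K_{X/S}+B)$ Cartier would prove a weaker theorem. It then runs Viehweg's argument \cite[Lemmas 2.39 and 2.40]{viehwegQuasiprojectiveModuliPolarized1995}. Over an affine $S$, a bounded complex $\cE^\bullet$ of locally free sheaves computes every $R^if_*(M\otimes\cG)$, and constancy of $h^i$ can be tested after pulling back to normalizations of curves in $S$. So one may assume $S$ is a smooth curve.

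There, two facts you never invoke are decisive.
\begin{enumerate}
\item The total space $(X,B)$ is slc \cite[Corollary 4.55]{kollarFamiliesVarietiesGeneral2023}. Since $q(K_{X/S}+B)-(K_X+B)$ is $f$-semiample for $q\ge 2$, the Ambro--Fujino theorem \cite[Theorem 1.12]{fujinoFundamentalTheoremsSemi2014} says every associated prime of $R^if_*M$ is the generic point of the image of an slc stratum.
\item Every slc stratum of $(X,B)$ dominates $S$ \cite[Corollary 4.56]{kollarFamiliesVarietiesGeneral2023}.
\end{enumerate}
Together these make every $R^if_*M$ torsion-free, hence locally free on the curve, hence $h^i$ constant.

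The same dominance of strata is a hypothesis you must check when you invoke a nefness theorem over curves in (2). You should also make sure the version you cite allows $q(K_{X_C/C}+B_C)$ to be only a Weil divisor with $qB$ integral; the paper uses \cite[Corollary 3.26.(3)]{codogniEffectivePositivityHodge2025} there.
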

\begin{proof}
    (1). For any morphism $\tau:T \to S$ from a reduced scheme $T$, with induced morphism
$\tau_X : X_T := X \times_S T \to X$, under the assumption that the base $S$ is normal and that $qB$ is integral, it is shown in Equation (3.3) of \cite[Proposition 3.3]{codogniEffectivePositivityHodge2025} that $\cO_X\bigl(q(K_{X/S}+B)\bigr)$ is flat over $S$ and
\[
q_X^*\cO_X\bigl(q(K_{X/S}+B)\bigr)
\simeq \cO_{X_T}\bigl(q(K_{X_T/T}+B_T)\bigr),
\]
where $B_T$ is the divisorial pullback of $B$ via $T \to S$.
It then follows that there exists a natural morphism
\[
\rho:\tau^*R^i f_*\cO_X\bigl(q(K_{X/S}+B)\bigr)\to R^i f_{T,*}\cO_{X_T}\bigl(q(K_{X_T/T}+B_T)\bigr).
\]

We then follow the argument in \cite[Lemma 2.40]{viehwegQuasiprojectiveModuliPolarized1995}. By “Cohomology and Base Change” (see \cite[Theorem \uppercase\expandafter{\romannumeral3}.12.11]{hartshorneAlgebraicGeometry1977}), the second statement follows from the first one. Moreover, assuming that $S$ is affine, one obtains a bounded complex $\mathcal{E}^\bullet$ of locally free coherent sheaves on $S$ such that for every coherent sheaf $\mathcal{G}$ on $S$,
\[
R^i f_*\left(\cO_X\left(q(K_{X/S}+B)\right)\otimes\mathcal{G}\right) \simeq \cH^i\left(\mathcal{E}^\bullet \otimes \mathcal{G}\right).
\]
To show that $\cH^i(\mathcal{E}^\bullet)$ is locally free, it suffices to verify the local freeness of $\cH^i\left(\mathcal{E}^\bullet \otimes \mathcal{G}\right)$, where $\mathcal{G} = \sigma_* \mathcal{O}_C$ for the normalization $\sigma : C \to C'$ of a curve $C' \subset S$.
Indeed, letting $\mathcal{E}_C^\bullet$ denote the pullback of $\mathcal{E}^\bullet$ to $C$, the local freeness of $\cH^i(\mathcal{E}_C^\bullet)$ implies that
\[
h^i(s) = \dim H^i\big(X_s, \cO_{X_s}\left(q(K_{X_s}+B_s)\right)\big)
\]
is constant for $s \in C$.
By varying $C$, one concludes that $h^i(s)$ is constant on $S$, hence $\cH^i(\mathcal{E}^\bullet)$ is locally free.
Using \cite[Lemma 2.39]{viehwegQuasiprojectiveModuliPolarized1995}, we may assume that $S$ is a smooth curve.

Since $(X,B)\to S$ is a locally stable family of slc pairs over a smooth curve, it follows from \cite[Corollary 4.55]{kollarFamiliesVarietiesGeneral2023} that $(X,B)$ is an slc pair. Hence $\Supp(B)$ does not contain any irreducible component of the conductor of $X$.
Moreover, since $q\geq2$, the divisor $q(K_{X/S}+B)-(K_X+B)$ is $f$-semiample. Hence, by \cite{ambroQuasilogVarieties2003}\cite[Theorem 1.12]{fujinoFundamentalTheoremsSemi2014}, every associated prime of $R^i f_*\cO_X\bigl(q(K_{X/S}+B)\bigr)$ is the generic point of the $f$-image of some slc stratum of $(X,B)$ for every $i$.
On the other hand, by \cite[Corollary 4.56]{kollarFamiliesVarietiesGeneral2023}, every slc stratum of $(X,B)$ dominates $S$. Therefore $R^i f_*\cO_X\bigl(q(K_{X/S}+B)\bigr)$ is torsion free, and hence locally free since $S$ is a smooth curve.

    (2). It follows from (1) and \cite[Corollary 3.26.(3)]{codogniEffectivePositivityHodge2025}
\end{proof}

\begin{thm}\label{thm: wp of direct image  of family of gmm'}
Let $f:(X,B)\to S$ be a locally stable family of klt pairs over a reduced quasi-projective scheme $S$ such that $K_X+B$ is $f$-semiample. 
Then for every natural number $q\geq 2$ such that $q(K_{X/S}+B)$ is Cartier, 
\begin{enumerate}
    \item the sheaf $R^if_*\cO_X\bigl(q(K_{X/S}+B)\bigr)$ is locally free  and compatible with arbitrary base change for all $i\geq 0$, and
    \item the sheaf $f_*\cO_X\bigl(q(K_{X/S}+B)\bigr)$ is weakly positive over $S$.
\end{enumerate}
\end{thm}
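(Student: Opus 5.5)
For (1), I would argue exactly as in the proof of Theorem~\ref{thm:semipositivity}.(1), now over a reduced quasi-projective base. Since $q(K_{X/S}+B)$ is Cartier, the sheaf $\cO_X(q(K_{X/S}+B))$ is invertible and flat over $S$. By Remark~\ref{rem:univ-hull}, its pullback to any $X_T$ equals $\cO_{X_T}(q(K_{X_T/T}+B_T))$, so there are natural base change maps. Following \cite[Lemma 2.40]{viehwegQuasiprojectiveModuliPolarized1995}, local freeness and base change compatibility reduce, via the Grothendieck complex, to constancy of $h^i(s)=h^i(X_s,\cO_{X_s}(q(K_{X_s}+B_s)))$. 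That constancy can be tested after pulling back to normalizations of curves in $S$. Over a smooth curve $C$, the pulled-back family is locally stable and klt, and $q(K_{X_C/C}+B_C)-(K_{X_C}+B_C)$ is relatively semiample because $q\ge 2$. Ambro--Fujino torsion-freeness together with \cite[Corollary 4.56]{kollarFamiliesVarietiesGeneral2023} then gives local freeness. This part I expect to be routine.

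For (2), I would build the data of Set-up~\ref{setup} and apply Theorem~\ref{thm:Gabber extension}.(3). I would proceed by noetherian induction on a stratification $S=\bigsqcup S_i$ into locally closed smooth pieces over which the family is well behaved, and treat the components separately. After embedding $S\subset\overline S\subset\mathbb P=\mathbb P^M$, I would compactify the family over a generically finite cover. By Weak Semistable Reduction \cite{abramovichWeakSemistableReduction2000}, there is a generically finite $W\to \overline S$ from a smooth projective $W$ and a model of the family over $W$ that is weakly semistable. Running the MMP for lc pairs \cite{birkarExistenceLogCanonical2012,haconExistenceLogCanonical2013} relatively over $W$ produces a locally stable family of slc pairs over $W$ with $K+B$ relatively semiample, which agrees with the pullback of $(X,B)$ over the preimage of $S$. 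Theorem~\ref{thm:semipositivity} then gives a nef locally free sheaf $\mathcal G_{\ov{T'}}$ extending $\xi^*\phi^*f_*\cO_X(q(K_{X/S}+B))$. Lemma~\ref{lem: dominant cover} provides a normal Galois cover $\Psi:\mathbb P'\to\mathbb P$ such that $T=\Psi^{-1}(S)$ dominates $W$ birationally, which yields $T'$ and $\ov{T'}$. Condition (3) of the Set-up would be verified the same way for curves $\overline C\to\mathbb P'$: apply semistable reduction and the MMP over $\overline C$ and take the resulting nef extension. Uniqueness of relative lc models over a smooth curve, together with the base change from (1), should give the compatibilities (a) and (b). Here the relevant models are lc models of $K+B$, which are canonical even though minimal models are not.

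I expect the main obstacle to be this compatibility check, together with handling a base that is non-normal or reducible, where Lemma~\ref{lem:trace} only provides a splitting trace over each closed subvariety. To deal with the latter, I would use the inductive stratification. Once the generic stratum is handled, weak positivity over a dense open subset is upgraded to weak positivity over all of $S$ by covering the remaining strata with further finite covers that have splitting trace maps. These covers would be assembled from Lemma~\ref{lem:trace} applied to the closures of the strata inside a common $\mathbb P'$, and Lemma~\ref{lem:wp functorial}.(2) then descends the positivity to $S$.
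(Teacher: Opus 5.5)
Your part (1) follows the paper's argument. Part (2) has a genuine gap in how you verify requirement (3) of Set-up~\ref{setup}.

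A single application of Lemma~\ref{lem: dominant cover} to $W\to\mathbb{P}$ only makes $T=\Psi^{-1}(S)$ factor through some $\widetilde W$ that is birational to $W$. Now take a curve $C\subset T$ lying over a proper closed subset of $S$, for instance inside the indeterminacy locus of $\widetilde W\dashrightarrow\overline W$, or mapping into $\overline W\setminus W$. For such a curve there is no map $\overline C\to\overline W$ along which $\eta^*\phi^*\cF$ is the pullback of the nef sheaf $\cG$.

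Your fallback is to run semistable reduction and the MMP over $\overline C$ itself, but this does not produce a sheaf on $\overline C$. Semistable reduction over a curve in general needs a finite ramified base change $\overline C_1\to\overline C$. For example, an isotrivial family of elliptic curves with local monodromy $-1$ at a boundary point has no locally stable extension with $K$ relatively semiample across that point. The special fibre would have to be a smooth elliptic curve or a cycle of rational curves, and both are incompatible with that monodromy. So you only get a nef extension on $\overline C_1$. Descending it to $\overline C$ by taking invariants gives a sheaf whose pullback to $\overline C_1$ is in general strictly smaller at the ramification points, which breaks (3)(a) for ramified $\overline\gamma$. Uniqueness of relative lc models only helps once a locally stable model over $\overline C$ itself exists.

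The missing idea is Step~3 of the paper: the stratification is used to build the \emph{single} cover $T$, so that every curve in $T$ already factors through a base on which weak semistable reduction has been carried out. The construction goes as follows.
\begin{enumerate}
\item Choose a chain $S=S^1\supset S^2\supset\cdots\supset S^{m+1}=\emptyset$.
\item Perform Steps~1--2 for the family restricted to each $S^j$. This gives $W^j\to S'^{j}\to S^j$ and a nef extension $\cG_{\overline{W^j}}$ compatible with pullback.
\item Apply Lemma~\ref{lem: dominant cover} to each $W^j\to\mathbb{P}$, and let $\mathbb{P}'$ be the normalization of the fibre product of these covers.
\item Pick $S^{j+1}$ so that $\phi^{-1}(S^j\setminus S^{j+1})\to S^j$ factors through $W^j$.
\end{enumerate}
A curve $C$ lying generically over $S^j\setminus S^{j+1}$ then maps to $W^j$, hence $\overline C\to\overline{W^j}$. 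One sets $\cG_{\overline C}$ to be the pullback of $\cG_{\overline{W^j}}$, so (3)(a) is automatic. Your uniqueness-of-lc-models observation is still useful, for comparing this with $\overline\chi^*\cG_{\overline{T'}}$ in (3)(b) when $C$ lies over a lower stratum.

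By contrast, your plan to upgrade weak positivity from a dense open set by treating the remaining strata with separate covers cannot work. Global generation at points of a lower stratum requires sections on all of $S$, and positivity statements on the strata do not provide them. In the paper, Theorem~\ref{thm:Gabber extension} yields weak positivity over all of $S$ at once. Non-normality or reducibility of $S$ is also not an obstacle: Lemma~\ref{lem:trace} gives the splitting trace for $\Psi^{-1}(S)\to S$ for any reduced closed $\overline S\subset\mathbb{P}$.

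Finally, the MMP output over $W$ is only crepant birational to $X_W$, not equal to it. To identify $\cG|_W$ with the pullback of $\cF$ you need the argument of Step~2: the negativity lemma, Cartierness of $q(K_{V/W}+B_V)$, and the projection formula.
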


\begin{proof}
    (1). For any morphism $T \to S$ from a reduced scheme $T$, with induced morphism
$q_X : X_T := X \times_S T \to X$, since $\cO_X\bigl(q(K_{X/S}+B)\bigr)$ is a line bundle, and hence a flat family of divisorial sheaves over $S$, by Remark~\ref{rem:univ-hull} it is a universal hull. Then,
\[
q_X^*\cO_X\bigl(q(K_{X/S}+B)\bigr)
\simeq \cO_{X_T}\bigl(q(K_{X_T/T}+B_T)\bigr),
\]
where $B_T$ is the divisorial pullback of $B$ via $T\to S$.
By the proof of Theorem~\ref{thm:semipositivity}.(1), it is enough to verify the local freeness of $R^i f_*\cO_X\bigl(q(K_{X/S}+B)\bigr)$ when $S$ is a smooth curve.

Since $(X,B)\to S$ is a locally stable family of klt pairs over a smooth curve, it follows from \cite[Corollary 4.56]{kollarFamiliesVarietiesGeneral2023} that $(X,B)$ is klt. Moreover, since $q(K_{X/S}+B)-(K_X+B)$ is $f$-semiample, \cite[Theorem 1.2.7]{kawamataIntroductionMinimalModel1987} implies that $R^i f_*\cO_X\bigl(q(K_{X/S}+B)\bigr)$ is torsion free, and hence locally free since $S$ is a smooth curve.

(2). \textit{Step 1.} In this step, we apply weak semistable reduction and the lc MMP to construct a compactification $\overline{W}$ of a generically finite cover of a desingularization of $S$.
The argument is similar to Step~1 and Step~2 in the proof of \cite[Theorem 3.7]{jiangBoundednessPolarizedLog2025}.

Let $\delta:S'\to S$ be a desingularization of $S$, and let $f':(X',B')\to S'$ be the base change. Then, by \cite[Corollary 4.56]{kollarFamiliesVarietiesGeneral2023}, $(X',B')$ is klt. Let $\pi':Y'\to X'$ be a fiberwise log resolution of $(X',B')$, and let $B_{Y'}$ be the divisor on $Y'$ defined by $K_{Y'}+B_{Y'}=\pi'^*(K_{X'}+B')$.

By \cite[Theorem 2.1, Proposition 4.4, and Proposition 5.1]{abramovichWeakSemistableReduction2000}, there exists a generically finite cover $\tau:W\to S'$ from a smooth variety $W$  and a compactification $W\hookrightarrow \overline{W}$ such that the pullback $\big(Y',\Supp(B_{Y'})\big)\times_{S'} W$ extends to a locally stable morphism $\big(\overline{Y},\Supp(B_{\overline{Y}})\big)\to \overline{W}$, where $\overline{W}$ is smooth and $\overline{W}$ is $\bQ$-factorial.

Write $B_{\overline{Y}}=B_{\overline{Y}}^+-B_{\overline{Y}}^-$. Since the generic fiber of $(\overline{Y},B_{\overline{Y}}^+)\to \overline{W}$ is klt and $\overline{W}$ is smooth, it follows from \cite[Corollary 4.56]{kollarFamiliesVarietiesGeneral2023} that every lc center of $(\overline{Y},B_{\overline{Y}}^+)$ dominates $\overline{W}$, hence $(\overline{Y},B_{\overline{Y}}^+)$ is klt.
Let 
\[f_W:(X_W,B_W)\to W\]
be the pullback of $f:(X,B)\to S$ via $W\to S$. Since $(X_W,B_W)$ is a weak lc model of $(\overline{Y},B_{\overline{Y}}^+)$ over $W$ which is semiample over $W$, the pair $(\overline{Y},B_{\overline{Y}}^+)$ admits a good minimal model over $W$ by \cite[Lemma 2.9.1]{haconBoundednessModuliVarieties2018}. Thus, by \cite{birkarExistenceLogCanonical2012}\cite[Theorem 1.1]{haconExistenceLogCanonical2013}, we can run an MMP for $(\overline{Y},B_{\overline{Y}}^+)$ over $\overline{W}$, which terminates with a good minimal model $(\overline{V},B_{\overline{V}})$ over $\overline{W}$.
By \cite[Corollary 4.57.1]{kollarFamiliesVarietiesGeneral2023}, 
\[
\overline{g}:(\overline{V},B_{\overline{V}})\to \overline{W}
\]
is locally stable. 
\[
\begin{tikzcd}
&(Y', B_{Y'}) 
    \arrow[d,"\pi'"'] 
& (Y',B_{Y'}) \times _{S'}W
    \arrow[l]
    \arrow[r,hook]
    \arrow[d,dashed]
& (\overline{Y}, B_{\overline{Y}}) 
\arrow[rd, dashed]
    \arrow[dd, bend right=60] \\
(X,B) \arrow[d, "f"'] 
& (X', B') \arrow[d, "f'"'] 
    \arrow[l] 
& (X_W, B_W) 
    \arrow[l] 
    \arrow[d, "f_W"'] 
  &  (V, B_V) 
    \arrow[ld, "g"'] 
    \arrow[r, hook] 
& (\overline{V}, B_{\overline{V}}) 
    \arrow[ld, "\overline{g}"] 
\\
S   
& S' 
    \arrow[l, "\delta"'] 
& W 
    \arrow[l,"\tau"'] 
    \arrow[r, hook] 
& \overline{W}
\end{tikzcd}
\]

\textit{Step 2.} In this step, we show that there exists a nef locally free sheaf $\cG$ on $\overline{W}$ such that $\cG|_W \simeq \tau^*\delta^*\cF$, where $\cF := f_*\cO_X\bigl(q(K_{X/S}+B)\bigr)$.

By the negativity lemma \cite[Lemma 3.39]{kollarBirationalGeometryAlgebraic1998}, the restriction $g:(V,B_V)\to W$ of $\overline{g}:(\overline{V},B_{\overline{V}})\to \overline{W}$ to $W$ is crepant birational to $f_W:(X_W,B_W)\to W$.
That is, let $Y$ be a common resolution of $X_W$ and $V$, and let $p:Y\to X_W$ and $r:Y\to V$ be the induced morphisms. Then we have
\[
p^*(K_{X_W/W}+B_W)=r^*(K_{V/W}+B_V).
\]

Since $f_W:(X_W,B_W)\to W$ is the pullback family of $f$ via $W\to S$, it follows from Remark~\ref{rem:univ-hull} that $q(K_{X_W/W}+B_W)$ is Cartier. Hence
\begin{equation}\label{eq:crepant-pullback}
r^*\bigl(q(K_{V/W}+B_V)\bigr)
=
p^*\bigl(q(K_{X_W/W}+B_W)\bigr)
\end{equation}
is Cartier. Therefore $q(K_{V/W}+B_V)$ is Cartier by \cite[Lemma 3.1]{hashizumeNoteLctrivialFibrations2024}, since $(V,B_V)$ is klt.
Consequently, the divisor $q(K_{V/W}+B_V)$ extends to a Weil divisor
$q(K_{\overline{V}/\overline{W}}+B_{\overline{V}})$
on $\overline{V}$.
Let 
$$\cG := \overline{g}_*\cO_{\overline{V}}\bigl(q(K_{\overline{V}/\overline{W}}+B_{\overline{V}})\bigr).$$ Then, by Theorem~\ref{thm:semipositivity}, $\cG$ is a nef locally free sheaf on $\overline{W}$, and it commutes with arbitrary base change. 
Therefore we have
\[
\tau^*\delta^*\cF
\simeq
f_{W,*}\cO_{X_W}\bigl(q(K_{{X_W}/W}+B_W)\bigr)
\simeq
g_*\cO_V\bigl(q(K_{V/W}+B_V)\bigr)
\simeq
\cG|_W,
\]
where the first and third isomorphisms are the base change isomorphisms in (1) and Theorem~\ref{thm:semipositivity}.(1), respectively. The second isomorphism follows from the projection formula, since both sides of equation~\eqref{eq:crepant-pullback} are Cartier.

\textit{Step 3.} In this step, we construct a finite cover $\phi:T\to S$ inductively using a stratification of $S$.
\begin{claim}\label{claim}
There exists a chain of reduced closed subschemes of $S$
\[
S=S^1\supset S^2\supset\cdots\supset S^m\supset S^{m+1}=\emptyset
\]
and a morphism of reduced schemes $\phi:T\to S$ such that
\begin{enumerate}
    
    \item[(a)] $\phi:T\to S$ is a finite cover which admits a splitting trace map.
\item[(b)] $S^i-S^{i+1}$ is nonempty and nonsingular.

\item[(c)] For each $i$, there exists a desingularization $\delta_i:S'^i\to S^i$, a generically finite cover $\tau_i:W^i\to S'^i$, and a compactification $W^i\subset\overline{W^i}$ such that there exists a nef locally free sheaf $\mathcal{G}_{\overline{W^i}}$ on $\overline{W^i}$, where $\mathcal{G}_{\overline{W^i}}|_{W^i}$ is isomorphic to the pullback of $\mathcal{F}|_{S^i}$ to $W^i$. Moreover, $\mathcal{G}_{\overline{W^i}}$ is compatible with further pullback.

    \item[(d)] The restriction of $\phi$ to $\phi^{-1}(S^i-S^{i+1})$ factors through
\[
\begin{tikzcd}
\phi^{-1}\left(S^i - S^{i+1}\right)
\arrow[r]
\arrow[d, hook]
&
\tau_i^{-1}\delta_i^{-1}\left(S^i - S^{i+1}\right)
\arrow[r]
\arrow[d, hook]
&
S^i - S^{i+1}
\arrow[d, hook]
\\
T
&
W^i
\arrow[r, "\delta_i \circ \tau_i"]
&
S^i.
\end{tikzcd}
\]
\end{enumerate}
\end{claim}
\begin{proof}[Proof of the Claim]
Let $S\subset\overline{S}$ be any compactification, and choose an embedding $\overline{S}\subset\mathbb{P}=\mathbb{P}^M$. 
We will construct $T\to S$ by constructing a finite morphism $\Psi:\mathbb{P}'\to \mathbb{P}$ with $\mathbb{P}'$ normal such that $T=\Psi^{-1}(S)$ and $\phi=\Psi|_T$. Then $T\to S$ is automatically finite and admits a splitting trace map by Lemma~\ref{lem:trace}. Hence condition {\rm(a)} holds.

We start with $S^1=S$.
Assume that for some $j$, the schemes $S^1,\ldots,S^j$ have been constructed.
By induction, we have constructed $S'^i$, $W^i$, and $\overline{W^i}$ for $i<j$, together with a finite morphism of schemes
\[
\Psi^{j-1}:\mathbb{P}'^{\,j-1}\to\mathbb{P}
\]
such that conditions {\rm(b)}, {\rm(c)}, and {\rm(d)} hold for $1\leq i\leq j-1$, with $\phi^{j-1}:=\Psi^{j-1}|_{T^j}$ in place of $\phi$, where $T^j=(\Psi^{j-1})^{-1}(S)$.

We may take a desingularization $\delta_j:S'^j\to S^j$, a generically finite cover $\tau_j:W^j\to S'^j$, and a compactification $W^j\subset\overline{W^j}$ as in Step~1 and Step~2 such that condition  {\rm(c)} holds for $i=j$. Consider the morphism $\lambda_j:W^j\to\mathbb{P}$, by Lemma~\ref{lem: dominant cover}, there exists an irreducible normal covering $\widetilde{\Psi}:\widetilde{\mathbb{P}}'\to \mathbb{P}$ and a scheme $\widetilde{W}^j$ birational to $W^j$ such that $\widetilde{\Psi}^{-1}(S^j)\to S^j$ factors through a morphism $\widetilde{\tau}:\widetilde{W}^j\to S^j$.

For $\Psi^{j}:\mathbb{P}'^{\,j}\to \mathbb{P}$, we take the morphism obtained by normalizing
\[
\mathbb{P}'^{\,j-1}\times_{\mathbb{P}}\widetilde{\mathbb{P}}'.
\]
Since $\Psi^{j}$ factors through $\Psi^{j-1}$, condition {\rm(d)} remain valid for $i<j$ when replacing $\Psi^{j-1}$ by $\Psi^{j}$.
To ensure condition {\rm(d)} for $i=j$, we choose $S^{j+1}$ to be the smallest reduced closed subscheme of $S^{j}$ containing the singular locus of $S^{j}$ such that the birational map between $\widetilde{W}^{j}$ and $W^{j}$ induces an isomorphism
\[
\widetilde{\tau}^{-1}(S^{j}-S^{j+1})
\xrightarrow{\cong}
(\tau_j\circ \delta_j)^{-1}(S^{j}-S^{j+1}).
\]
Thus, conditions {\rm(b)}, {\rm(c)} and {\rm(d)} hold for all $i\le j$.

Since $S^{j}\neq S^{j+1}$, the $S^{i}$ form a strictly decreasing chain of closed subschemes. After finitely many steps, we reach $S^{m+1}=\emptyset$, and the construction terminates.
\end{proof}

\textit{Step 4.} In this step, we apply Gabber's Extension Theorem to conclude the proof. 

Let $\phi:T\to S$ be the finite cover obtained in Step~3, and let $W\to S$ be the generically finite cover with compactification $W\subset\overline{W}$ constructed in Step~1. We choose a desingularization $T'\to T$ and a smooth projective compactification $T'\subset\overline{T'}$ dominating $W$ and $\overline{W}$ respectively. We then obtain the following commutative diagram.
\[
\begin{tikzcd}[row sep=1.5em, column sep=3.5em]
T \arrow[d, "\phi"']
&
T' \arrow[l, "\xi"] \arrow[d, "\theta"']
\arrow[r, hook]
&
\overline{T'} \arrow[d, "\overline{\theta}"']
\\
S
&
W \arrow[l, "\tau\circ\delta"'] \arrow[r, hook]
&
\overline{W}
\end{tikzcd}
\]

In Step~2, we obtain a nef locally free sheaf
$\mathcal{G}= \overline{g}_*\mathcal{O}_{\overline{V}}\bigl(q(K_{\overline{V}/\overline{W}}+B_{\overline{V}})\bigr)$
on $\overline{W}$ such that $\mathcal{G}|_W$ is isomorphic to
$\delta^*\tau^*\mathcal{F}$, where $\mathcal{F}:= f_*\mathcal{O}_X\bigl(q(K_{X/S}+B)\bigr)$.
By base change in Theorem~\ref{thm:semipositivity}.(1),
\[\mathcal{G}_{\overline{T'}}
:=
\overline{g}_{\overline{T'}*}
\mathcal{O}_{\overline{V}_{\overline{T'}}}
\bigl(q(K_{\overline{V}_{\overline{T'}}/\overline{T'}}+B_{\overline{V}_{\overline{T'}}})\bigr)\simeq \overline{\theta}^*\mathcal{G}\] 
is a  nef locally free sheaf on $\overline{T'}$,
where  $\overline{g}_{\overline{T'}}:(\overline{V}_{\overline{T'}},B_{\overline{V}_{\overline{T'}}}) \to \overline{T'}$ is the pullback family of $\ov{g}:(\overline{V},B_{\overline{V}})\to \ov{W}$ via $\ov{T'}\to \ov{W}$.
By the base change properties of the corresponding sheaves in (1) and Theorem~\ref{thm:semipositivity}.(1), we have
\[
\mathcal{F}_{T'}=\xi^*\phi^*\mathcal{F}\simeq \theta^*\delta^*\tau^*\mathcal{F}\simeq \theta^*(\mathcal{G}|_W)\simeq (\overline{\theta}^*\mathcal{G})|_{T'}\simeq (\mathcal{G}_{\overline{T'}})|_{T'}.
\]
Therefore, requirements (1) and (2) of Set-up~\ref{setup} are verified. In Step~3, let $\overline{T}:=\Psi^{-1}(\overline{S})$, and after further replacing $\overline{T'}$ by a pullback, we may assume that there exists a morphism $\overline{\xi}:\overline{T'}\to \overline{T}$. We have the following diagram as in Theorem~\ref{thm:Gabber extension}.
\[
\begin{tikzcd}[row sep=1.5em, column sep=3.5em]
T' \arrow[r, "\subset"] \arrow[d, "\xi"']
& \overline{T'} \arrow[d, "\overline{\xi}"'] \\
T \arrow[r, "\subset"] \arrow[d, "\phi"']
& \overline{T} \arrow[r, "\subset"] \arrow[d, "\overline{\phi}"']
& \mathbb{P}' \arrow[d, "\Psi"] \\
S \arrow[r, "\subset"]
& \overline{S} \arrow[r, "\subset"]
& \mathbb{P}
\end{tikzcd}
\]
In order to apply Gabber's Extension Theorem to obtain the weak positivity of $\mathcal{F}$, it remains to verify requirement (3) of Set-up~\ref{setup}. We use the chain of closed subschemes
\[
S=S^1\supset S^2\supset\cdots\supset S^m\supset S^{m+1}=\emptyset
\]
constructed in Step~3.

Fix a morphism $\overline{\eta}: \overline{C} \to \mathbb{P}'$ from a nonsingular projective curve $\overline{C}$, with $C = \overline{\eta}^{-1}(T)$ dense in $\overline{C}$.
For some $j>0$, the image of $\phi\circ\eta(C)$ is contained in $S^j$ but not in $S^{j+1}$. By Claim~\ref{claim}.(d), the morphism $C\to S^j$ factors through $C\to W^j$, and hence extends to $\overline{C}\to \overline{W^j}$.
By Claim~\ref{claim}.(c),  we obtain a nef locally free sheaf $\mathcal{G}_{\overline{W^j}}$ such that $\mathcal{G}_{\overline{W^j}}|_{W^j}$ is isomorphic to the pullback of $\mathcal{F}|_{S^j}$.
Then we define $\mathcal{G}_{\overline{C}}$ to be the  pullback of   $\mathcal{G}_{\overline{W^j}}$ via $\overline{C}\to \overline{W^j}$. By the base change properties in (1) and Theorem~\ref{thm:semipositivity}.(1) again, $\mathcal{G}_{\overline{C}}$ is the locally free extension of $\eta^*\phi^*\mathcal{F}$.
\[\begin{tikzcd}
\overline{C}  & C \arrow[l, "\supset"'] \arrow[rr,bend left=20]\arrow[r,"\eta"] \arrow[rd] & \phi^{-1}(S^{j}-S^{j+1})\subset T \arrow[d,"\phi"] &(\tau_j\circ \delta_j)^{-1}(S^{j}-S^{j+1})\subset W^j\arrow[ld]\\
& & S^j-S^{j+1}
\end{tikzcd}\]
  If $\overline{\eta}': \overline{C'} \to \mathbb{P}'$ factors through $\overline{\gamma}: \overline{C'}\to\overline{C}$, we apply Claim~\ref{claim}.(c) to $\overline{C'}\to\overline{C}\to \overline{W^j}$, and obtain Set-up~\ref{setup}.(3.a). 
If $\overline{\eta}: \overline{C} \to \mathbb{P}'$ lifts to a morphism $\overline{\chi}: \overline{C} \to \overline{T'}$, we apply Claim~\ref{claim}.(c) to $\overline{C}\to\overline{T'}\to\overline{W^1}=\overline{W}$, and Set-up~\ref{setup}.(3.b) holds. We finish the proof.
\end{proof}

% \begin{rem}
%  At the time \cite{viehwegWeakPositivityStability1990,viehwegWeakPositivityStability1990a,viehwegQuasiprojectiveModuliPolarized1995} were written, the Weak Semistable Reduction Theorem \cite{abramovichWeakSemistableReduction2000} was not yet known. Therefore, Viehweg could only use Gabber’s Extension Theorem together with variations of Hodge structures to prove weak positivity for pushforwards of relative canonical bundles twisted by semiample line bundles, and then reduce the weak positivity of pushforwards of powers of relative canonical bundles to this case. In \cite{viehwegCompactificationsSmoothFamilies2010}, using \cite{abramovichWeakSemistableReduction2000}, he proved weak positivity simultaneously for all powers of relative canonical bundles. 

% However, at that time, the MMP for lc pairs \cite{birkarExistenceLogCanonical2012,haconExistenceLogCanonical2013} was not yet available. As a result, verifying the requirements of Gabber's Extension Theorem and analyzing the base change properties of the relevant sheaves required lengthy arguments; see \cite[Comments 5.9]{viehwegCompactificationsSmoothFamilies2010} for a summary of the strategy. Moreover, multiplier ideal sheaves also played an essential role in the argument. Working in the setting of log pairs, some parts of Viehweg's argument can be simplified.
% \end{rem}

\begin{thm}\label{thm:Viehweg package'}
Let $f:(X,B)\to S$ be a locally stable family of klt pairs over a reduced quasi-projective scheme $S$ such that $K_X+B$ is $f$-semiample. 
Let $L$ be an $f$-semiample line bundle on $X$.
Assume that:
\begin{itemize}
 % \item    $c(K_{X/S}+B)$ is Cartier for some natural number $c\geq 2$,
    \item there exists $l\in \bN$ such that 
$\lct(X_s,B_s,|L_s|)>\frac{1}{l}$ for all $s\in S$, and
\item the sheaf $f_*L$ is locally free of rank $r>0$ and compatible with arbitrary base change.
\end{itemize}

Then  for every natural number $q\geq 2$ such that $q(K_{X/S}+B)$ is Cartier,   we have:
\begin{enumerate}
    \item 
    % {\normalfont (\textbf{Base change and locally freeness})} 
    the sheaf
    $R^if_*\bigl(\cO_X(ql(K_{X/S}+B))\otimes L^q\bigr)$
    is locally free 
    %of rank $r'$ 
    and compatible with arbitrary base change  for all $i\geq0$, and

    \item
    % {\normalfont (\textbf{Weak positivity})} 
     the sheaf
    $$
    \left(\bigotimes^{r} f_*\bigl(\cO_X(ql(K_{X/S}+B))\otimes L^q\bigr)\right)\otimes\det(f_*L)^{-q}
    $$
    is weakly positive over $S$.

%   \item {\normalfont (\textbf{Weak stability})}  For $p,q \in \mathbb{N}$ divisible by $c$,  if $r' \neq 0$, then there exists a positive rational number $\delta$ such that
% \[
% \begin{aligned}
% &\left(\bigotimes^{r} f_*\bigl(\cO_X(pl(K_{X/S}+B))\otimes L^p\bigr)\right)\otimes\det(f_*L)^{-p} \\
% &\succeq\ 
% \delta \cdot \det\!\left(f_*\bigl(\cO_X(ql(K_{X/S}+B))\otimes L^q\bigr)\right)^{r}
% \otimes \det(f_*L)^{-q r'}.
% \end{aligned}
% \]
\end{enumerate}
\end{thm}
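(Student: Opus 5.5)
Following Viehweg's argument for \cite[Theorem 6.24]{viehwegQuasiprojectiveModuliPolarized1995}, I would reduce to Theorem~\ref{thm: wp of direct image  of family of gmm'} via the fiber product trick. Write $M:=\cO_X(ql(K_{X/S}+B))\otimes L^q$.

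For part (1), I would argue exactly as in Theorem~\ref{thm: wp of direct image  of family of gmm'}.(1). Since $M$ is a line bundle, it is a universal hull by Remark~\ref{rem:univ-hull}, so its pullback along any $T\to S$ is the corresponding sheaf on $X_T$. By the base-change reduction of Theorem~\ref{thm:semipositivity}.(1), it suffices to prove torsion-freeness of $R^if_*M$ over a smooth curve $S$. There $(X,B)$ is klt by \cite[Corollary 4.56]{kollarFamiliesVarietiesGeneral2023}. Moreover
\[
M-(K_X+B)\sim_{\bQ}(ql-1)(K_{X/S}+B)+qL,
\]
which is $f$-semiample because $K_X+B$ and $L$ are $f$-semiample. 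Kawamata's torsion-freeness \cite[Theorem 1.2.7]{kawamataIntroductionMinimalModel1987} then gives the claim.

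For part (2), I would pass to the $r$-fold fiber product $f^{(r)}:(X^{(r)},B^{(r)})\to S$ with $L^{(r)}$. By Proposition~\ref{prop:fiber product}, this is a locally stable klt family with $\lct$ unchanged, and
\[
f^{(r)}_*\bigl(\cO(ql(K_{X^{(r)}/S}+B^{(r)}))\otimes (L^{(r)})^q\bigr)\simeq\bigotimes^r f_*M .
\]
Because $f_*L$ is locally free of rank $r$ and commutes with base change, the determinant gives a natural inclusion $\det(f_*L)\hookrightarrow f^{(r)}_*L^{(r)}=\bigotimes^r f_*L$, which is nowhere vanishing on fibers. This yields an effective divisor $\Gamma\in|L^{(r)}\otimes f^{(r)*}\det(f_*L)^{-1}|$ that does not contain any fiber. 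By the lct hypothesis together with Proposition~\ref{prop:fiber product}.(3), the pair $(X^{(r)}_s,B^{(r)}_s+\frac1l\Gamma_s)$ is klt for every $s$. The plan is then to write
\[
\cO(ql(K+B^{(r)}))\otimes (L^{(r)})^q\otimes f^{(r)*}\det(f_*L)^{-q}=\cO\bigl(ql(K_{X^{(r)}/S}+B^{(r)}+\tfrac1l\Gamma)\bigr),
\]
and to apply Theorem~\ref{thm: wp of direct image  of family of gmm'}, or more precisely its proof, to the locally stable klt family $(X^{(r)},B^{(r)}+\frac1l\Gamma)\to S$. The projection formula then identifies the direct image with $(\bigotimes^r f_*M)\otimes\det(f_*L)^{-q}$, which gives the weak positivity.

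I expect this last application to be the main obstacle. First, $\Gamma$ must be a relative Mumford divisor so that the family is locally stable; it is the zero locus of a section nonvanishing on each fiber, hence relative Cartier, so this should be fine. Second, $K+B^{(r)}+\frac1l\Gamma$ is not $f$-semiample, since $\Gamma$ is only $\bQ$-linearly a twist of the semiample class $L^{(r)}$. To handle this, I would rerun Steps 1–4 of the proof of Theorem~\ref{thm: wp of direct image  of family of gmm'} with $N:=ql(K+B^{(r)}+\frac1l\Gamma)$. The $f$-semiampleness was used only to guarantee that $N-(K+B^{(r)}+\frac1l\Gamma)$ is relatively semiample, which is needed for torsion-freeness and for the nefness in Theorem~\ref{thm:semipositivity}. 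Here this difference is $\bQ$-linearly equivalent to $(ql-1)(K+B^{(r)})+(q-1)L^{(r)}$ up to pullbacks from the base, so it is $f$-semiample. In the weak semistable reduction step, I would need the variant of Theorem~\ref{thm:semipositivity} in which a semiample twist is allowed (as in \cite{codogniEffectivePositivityHodge2025, fujinoSemipositivityTheoremsModuli2018}). I would then need to check that the crepant comparison in Step~2 still holds after the good minimal model is taken relative to $K+B$ with $L$ carried along; this is the most delicate point. If the rerun fails, the fallback is Viehweg's covering trick: take a cyclic cover branched along $\Gamma$ to absorb $\frac1l\Gamma$ into a klt boundary on a new locally stable family. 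That family is again a family of good minimal models, and the unmodified Theorem~\ref{thm: wp of direct image  of family of gmm'} applies to it, with the original sheaf appearing as a direct summand via the eigen-decomposition.
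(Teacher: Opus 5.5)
Your skeleton matches the paper's. Part (1) is argued identically. For (2) you use the same ingredients: the $r$-fold fiber product, the determinant section, the lct bound via Proposition~\ref{prop:fiber product}.(3), Theorem~\ref{thm: wp of direct image  of family of gmm'} with exponent $ql$, and Proposition~\ref{prop:fiber product}.(2).

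The problem is the step you single out as the main obstacle. Your claim that $K+B^{(r)}+\frac1l\Gamma$ is not $f^{(r)}$-semiample is false. Because of it, the proposal never actually applies the theorem. It defers the key step either to an unexecuted rerun of Steps 1--4 or to a cyclic-cover fallback. The rerun is also misdescribed: semiampleness is used in Step~1 to obtain a good minimal model over $W$, and again when applying Theorem~\ref{thm:semipositivity} to $\overline{V}\to\overline{W}$, not only for the difference $N-(K+B)$. As written, the weak positivity in (2) is therefore not derived.

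The missing observation is that relative semiampleness is unchanged by twisting with a line bundle from the base. If $f^*f_*N\to N$ is surjective, then so is $f^*f_*(N\otimes f^*P)\simeq f^*f_*N\otimes f^*P\to N\otimes f^*P$. Now $\cO(\Gamma)\simeq L^{(r)}\otimes f^{(r)*}\det(f_*L)^{-1}$, and both $L^{(r)}$ and $K_{X^{(r)}/S}+B^{(r)}$ (by \eqref{eq:reflexive-pullback}) are $f^{(r)}$-semiample. Hence $K_{X^{(r)}/S}+B^{(r)}+\frac1l\Gamma$ is $f^{(r)}$-semiample. This is exactly the ``up to pullbacks from the base'' reasoning you already apply to the difference $N-(K+B^{(r)}+\frac1l\Gamma)$.

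Moreover, $ql(K_{X^{(r)}/S}+B^{(r)}+\frac1l\Gamma)=ql(K_{X^{(r)}/S}+B^{(r)})+q\Gamma$ is Cartier and $ql\ge 2$. So Theorem~\ref{thm: wp of direct image  of family of gmm'}.(2) applies verbatim to $(X^{(r)},B^{(r)}+\frac1l\Gamma)\to S$. The projection formula then yields $(\bigotimes^r f_*M)\otimes\det(f_*L)^{-q}$ directly.

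With this fix, your route is a legitimate variant of the paper's. The paper instead uses Lemma~\ref{lem:wp functorial}.(2) and Viehweg's Lemma 2.1 to pass to a finite cover with split trace on which $\det(f_*L)=\lambda^r$. It then replaces $L$ by $L\otimes f^*\lambda^{-1}$, so that $\det(f_*L)=\cO_S$ and $\Gamma\in|L^{(r)}|$, which makes semiampleness evident. From there it concludes via weak positivity of $\bigotimes^r f_*M$. Your twisted $\Gamma$ avoids the covering-and-descent step and lands exactly on the sheaf in (2).
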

\begin{proof}
(1). For any morphism $T \to S$ from a reduced scheme $T$, with induced morphism
$q_X : X_T := X \times_S T \to X$, since $\cO_X\bigl(ql(K_{X/S}+B)\otimes L^q\bigr)$ is a flat family of divisorial sheaves over $S$, by Remark~\ref{rem:univ-hull} it is a universal hull. Then,
\[
q_X^*\cO_X\bigl(ql(K_{X/S}+B)\otimes L^q\bigr)
\simeq \cO_{X_T}\bigl(ql(K_{X_T/T}+B_T)\otimes L_T^q\bigr),
\]
where $B_T$ is the divisorial pullback of $B$ via $T\to S$ and $L_T=q_X^*L$. By the proof of Theorem~\ref{thm:semipositivity}.(1), it is enough to verify the torsion freeness of $R^i f_*\bigl(\cO_X(ql(K_{X/S}+B))\otimes L^q\bigr)$ when $S$ is a smooth curve.
Since $(X,B)\to S$ is a locally stable family of klt pairs over a smooth curve, it follows from \cite[Corollary 4.56]{kollarFamiliesVarietiesGeneral2023} that $(X,B)$ is klt. Moreover, since $ql(K_{X/S}+B)+qL-(K_X+B)$ is $f$-semiample, \cite[Theorem 1.2.7]{kawamataIntroductionMinimalModel1987} implies the desired torsion freeness.

 (2). By Lemma \ref{lem:wp functorial}.(2), we are allowed to replace $S$ by a finite cover $\tau : S' \to S$, provided that the trace map splits the inclusion $\mathcal{O}_S \to \tau_*\mathcal{O}_{S'}$. By \cite[Lemma 2.1]{viehwegQuasiprojectiveModuliPolarized1995}, we may assume that for some invertible sheaf $\lambda$ on $S$ one has $\lambda^{r}=\det(f_*L)$. Replacing $L$ by $L \otimes f^*\lambda^{-1}$ does not affect the assumptions or conclusions. Hence we may assume that $\det(f_*L) = \mathcal{O}_S$.
Under this additional assumption, by \cite[Corollary 2.20]{viehwegQuasiprojectiveModuliPolarized1995}, it is enough to show that:
for $q \geq 2$ divisible by the Cartier index of $K_{X/S}+B$, the sheaf
    \[
    f_*\bigl(\cO_X(ql(K_{X/S}+B))\otimes L^q\bigr)
    \]
    is weakly positive over $S$.
% \begin{enumerate}
% \setcounter{enumi}{1}
% \renewcommand{\labelenumi}{(\arabic{enumi}$'$)}
%     \item For $q \in \mathbb{N}$ divisible by $c$, the sheaf
%     \[
%     f_*\bigl(\cO_X(ql(K_{X/S}+B))\otimes L^q\bigr)
%     \]
%     is weakly positive over $S$.
    
%     \item For $p,q \in \mathbb{N}$ divisible by $c$, if $r' \neq 0$, there exists a positive rational number $\delta$ such that
%     \[
%     f_*\bigl(\cO_X(pl(K_{X/S}+B))\otimes L^p\bigr) \succeq \delta \cdot \det\big(f_*\bigl(\cO_X(ql(K_{X/S}+B))\otimes L^q\bigr)\big).
%     \]
% \end{enumerate}

% (2$'$). 
With the notation as in \S\ref{sec:fiber prod}, let $f^{(m)}:(X^{(m)},B^{(m)})\to S$ be the $m$-fold fiber product of $(X,B)$ over $S$, and let $L^{(m)}:=\bigotimes_{i=1}^m p_i^*L$.
Let $m=r=\operatorname{rank}(f_*L)$. Then the determinant gives an inclusion
\[
\det(f_*L)=\mathcal{O}_S
 \hookrightarrow
f^{(r)}_*L^{(r)} \simeq \bigotimes^r f_*L,
\]
which splits locally. Consequently, the zero divisor $\Gamma^{(r)}$ of the induced section of $L^{(r)}$ does not contain any fiber of $f^{(r)}$.
We therefore obtain
\begin{align*}
&f^{(r)}_*\Bigl(\mathcal{O}_{X^{(r)}}\bigl(ql(K_{X^{(r)}/S}+B^{(r)})\bigr)\otimes (L^{(r)})^q\Bigr)\\
\simeq{}& f^{(r)}_*\mathcal{O}_{X^{(r)}}\bigl(ql(K_{X^{(r)}/S}+B^{(r)})+q\Gamma^{(r)}\bigr) \\
\simeq{}& f^{(r)}_*\mathcal{O}_{X^{(r)}}\bigl(ql(K_{X^{(r)}/S}+B^{(r)}+\tfrac{1}{l}\Gamma^{(r)})\bigr).
\end{align*}
By Proposition~\ref{prop:fiber product}.(3), $(X_s^{(r)},B_s^{(r)}+\frac{1}{l}\Gamma_s^{(r)})$ is klt for any $s\in S$, and hence $$(X^{(r)},B^{(r)}+\frac{1}{l}\Gamma^{(r)})\to S$$ is a  family of klt pairs such that $(X^{(r)},B^{(r)}+\frac{1}{l}\Gamma^{(r)})$ is $f^{(r)}$-semiample. It follows that, by 
% Theorem~\ref{thm: wp of direct image of stable family},
 Theorem \ref{thm: wp of direct image  of family of gmm'}.(2), $$f^{(r)}_*\cO_{X^{(r)}}\bigl(ql(K_{X^{(r)}/S}+B^{(r)}+\frac{1}{l}\Gamma^{(r)})\bigr)$$
is weakly positive over $S$.
On the other hand, by Proposition~\ref{prop:fiber product}.(2), we have
\[
\bigotimes_{i=1}^r f_*\bigl(\cO_X(ql(K_{X/S}+B))\otimes L^q\bigr)
\simeq f^{(r)}_*\bigl(\cO_{X^{(r)}}(ql(K_{X^{(r)}/S}+B^{(r)}))\otimes (L^{(r)})^q\bigr),
\]
which is therefore weakly positive over $S$.
Finally, since $f_*\bigl(\cO_X(ql(K_{X/S}+B))\otimes L^q\bigr)$ is locally free by (1), it follows from \cite[Lemma~2.16.d]{viehwegQuasiprojectiveModuliPolarized1995} that $f_*\bigl(\cO_X(ql(K_{X/S}+B))\otimes L^q\bigr)$ is weakly positive over $S$.
% (3$'$). Let $m=r\cdot r'=\operatorname{rank}(f_*L)\cdot\operatorname{rank}f_*\bigl(\cO_X(ql(K_{X/S}+B))\otimes L^q\bigr)$. One has natural inclusions, splits locally,
% \[
% \mathcal{O}_S=\det(f_*L)^{r'}
% \hookrightarrow 
% f^{(m)}_*L^{(m)} \simeq \bigotimes^m f_*L
% \]
% and
% \[
% \begin{aligned}
% \mathcal{A}
% &:= \det\Bigl(f_*\bigl(\mathcal{O}_X(ql(K_{X/S}+B))\otimes L^q\bigr)\Bigr)^r \\
% &\hookrightarrow f^{(m)}_*\Bigl(\mathcal{O}_{X^{(m)}}\bigl(ql(K_{X^{(m)}/S}+B^{(m)})\bigr)\otimes (L^{(m)})^q\Bigr) \\
% &\simeq \bigotimes^m f_*\bigl(\mathcal{O}_X(ql(K_{X/S}+B))\otimes L^q\bigr).
% \end{aligned}
% \]
% Let $\Gamma_1^{(m)}$ and $\Gamma_2^{(m)}$ be the zero divisors of the induced sections. Then we have 
% \[\Gamma_1^{(m)}\sim L^{(m)}\] 
% and 
% \[\Gamma_2^{(m)}+(f^{(m)})^*\cA\sim ql(K_{X^{(m)}/S}+B^{(m)})+q\Gamma_1^{(m)}.\]
% Let $\alpha$ be a positive integer. 
\end{proof}
		%%%%%%%%%%%%%%%%%%%%%%%%%%%%%%%%%%%%%%%%%%%%%%%

\section{Ample Line Bundles on Algebraic Moduli Spaces}
In this section, we use Viehweg's ampleness criterion and the weak positivity results from the previous section to prove that the normalization of the moduli space of polarized good minimal models is quasi-projective.

%%%%%%%%%%%%%%%%%%%%%%%%%%%%%%%%%%%%%%%%%

\begin{defn}[Polarized good minimal models]\label{def:pgmm} 
A \emph{polarized good minimal model} $(X,A)$ over an algebraically closed field $k$ of characteristic zero consists of a projective connected variety $X$ and a line bundle $A$ such that
\begin{enumerate}
    \item $X$ is klt,
    \item $K_X$ is semiample and defines a contraction $f:X\to Z$, and
    \item $A$ is ample over $Z$.
\end{enumerate}

Let $d\in \bN$, $u\in \bQ^{>0}$, and $\sigma\in \bQ[t]$ be a polynomial. A \emph{$(d,u,\sigma)$-polarized good minimal model} is a polarized good minimal model $(X,A)$ such that
\begin{enumerate}
    \item $\dim X=d$,
    \item $\vol(A|_F)=u$, where $F$ is a general fiber of $f:X\to Z$, and
    \item $(K_X+tA)^d=\sigma(t)$.
\end{enumerate}
Let $\mathcal{G}_{klt}(d,u,\sigma)$ denote the set of $(d,u,\sigma)$-polarized good minimal models.
\end{defn}

  \begin{thm}[{\cite{jiangBoundednessKltGood2023}}]\label{thm:boundedness}
The varieties $X$ appearing in $\mathcal{G}_{klt}(d,u,\sigma)$ form a bounded family. Moreover, there exist positive natural numbers $m$ and $\tau$, depending only on $(d,u,\sigma)$, such that $m(\tau K_X + A)$ is very ample.
\end{thm}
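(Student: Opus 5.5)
The statement to prove is Theorem~\ref{thm:boundedness}: the underlying varieties in $\mathcal{G}_{klt}(d,u,\sigma)$ are bounded, and there are uniform $m,\tau$ with $m(\tau K_X+A)$ very ample. I would reduce it to the boundedness of polarized log Calabi--Yau fibrations. For $(X,A)\in\mathcal{G}_{klt}(d,u,\sigma)$, let $f:X\to Z$ be the Iitaka fibration defined by $K_X$. Then $K_X\sim_{\bQ} f^*H$ with $H$ ample on $Z$. The general fiber $F$ is a klt Calabi--Yau variety with $\vol(A|_F)=u$, and $A$ is ample over $Z$.

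First, bound the fibers. By Birkar's boundedness of polarized Calabi--Yau pairs with fixed volume and klt singularities, the pairs $(F,A|_F)$ form a bounded family. In particular the fibers are $\epsilon$-lc for a uniform $\epsilon>0$, and there is a uniform $n$ with $nK_F\sim 0$ and with $|A|_F|$-type effective very ampleness. Next, apply the canonical bundle formula. This gives $K_X\sim_{\bQ} f^*(K_Z+B_Z+M_Z)$, where $(Z,B_Z+M_Z)$ is a generalized klt pair whose coefficients lie in a DCC (in fact finite) set, and where $pM$ is b-Cartier for a uniform $p$; both facts come from the boundedness of the fibers. Since $K_Z+B_Z+M_Z$ is ample, the Hacon--McKernan--Xu/Birkar boundedness for generalized lc models of general type applies, provided the volume is bounded. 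The volume is controlled by $\sigma$: expanding $(K_X+tA)^d$, the coefficient of $t^{d-\dim Z}$ is $\binom{d}{\dim Z}\vol(K_Z+B_Z+M_Z)\cdot u$. Hence $\dim Z$ and the volume of the base are determined by $(d,u,\sigma)$, and the bases $(Z,B_Z+M_Z)$ are bounded, with a uniform $I$ such that $I(K_Z+B_Z+M_Z)$ is very ample.

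Then combine base and fiber. I would show that $\tau K_X+A$ is ample for a uniform $\tau$: for $\tau\ge 2d$, cone-theorem length bounds make $K_X+A$ nef, and then $\tau K_X+A$ is ample for $\tau$ large. Its volume is $\sigma$-controlled. A bounded-Cartier-index statement for $A$ (from fiber boundedness together with the adjunction on the fibration), combined with the effective basepoint-freeness/very ampleness for klt varieties with bounded volume and Cartier index (Koll\'ar's effective results or Birkar's), then yields uniform $m$ with $m(\tau K_X+A)$ very ample. Boundedness of the $X$ follows, since they are embedded with bounded degree $m^d(\tau K_X+A)^d$ in a fixed projective space.

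The main obstacle is this last combination step. The difficulty is to obtain a \emph{uniform} lower bound on the lc singularities of $X$ together with effective birationality of $|m(\tau K_X+A)|$. Volume bounds alone do not bound klt varieties, because $\epsilon$ may degenerate. I would try to get the uniform $\epsilon$ by lifting: over the bounded base, use the bounded family of fibers and the uniform generalized klt-ness of $(Z,B_Z+M_Z)$ via inversion of adjunction for the canonical bundle formula. I would then apply Birkar's boundedness of $\epsilon$-lc polarized varieties with bounded volume of $\tau K_X+A$.
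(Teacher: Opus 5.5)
The paper does not prove this theorem; it imports it from \cite{jiangBoundednessKltGood2023}, so your outline has to stand on its own. The preliminary steps are fine. Boundedness of the general fibres $(F,A|_F)$, the canonical bundle formula, and reading off $\dim Z$ and $\vol(K_Z+B_Z+M_Z)$ from the lowest-order nonzero coefficient of $\sigma$ are all correct.

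\textbf{The uniform $\tau$.} Your step producing a uniform $\tau$ is wrong. The cone theorem bounds the length of $K_X$-negative extremal rays. Since $K_X$ is nef here, there are none, so it says nothing about curves on which $A$ is negative. The $2d$ length bound is what makes $K_X+2dA$ nef when $A$ is \emph{ample}, which is the opposite situation. What you need is a uniform bound on how negative $A$ can be on horizontal curves, relative to $K_X$. This cannot follow from the invariants you actually use. Take $X=S\times C$ with $S$ an abelian surface and $g(C)\ge 2$, and set $A=p_1^*A_S\otimes p_2^*N$ with $A_S^2=u$ and $\deg N=n$. Then:
\begin{itemize}
\item $\sigma(t)=3u(2g-2)t^2+3un\,t^3$;
\item $\tau K_X+A$ is ample if and only if $\tau(2g-2)+n>0$.
\end{itemize}
Keep $u$, $\dim Z=1$ and the base volume $2g-2$ fixed and let $n\to-\infty$. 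Then no $\tau$ depending only on these data exists, and $K_X+A$ is not even nef, contrary to your cone-theorem claim. Only the higher coefficients of $\sigma$ (here $A^3=3un$) rule this out, and your argument never uses them.

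\textbf{The singularities of $X$.} This is the gap you flag yourself. A ``bounded Cartier index for $A$'' is vacuous, since $A$ is a line bundle. What would help is a bound $r$ on the Cartier index of $K_X$; because $X$ is klt, this gives $\epsilon=1/r$ immediately. Your proposed substitute, inversion of adjunction for the canonical bundle formula, is only qualitative. It tells you $X$ is klt when $(Z,B_Z+M_Z)$ is generalised klt. But the discriminant only records lc thresholds of fibres over codimension-one points of $Z$, so it yields no lower bound on the minimal log discrepancies of $X$ at points of special fibres. Bounding $Z$ and the general fibre does not by itself control the special fibres of $X\to Z$. That control, together with the uniform $\tau$, is precisely the content of the cited theorem. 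Without it, neither the very ampleness of $m(\tau K_X+A)$ nor the boundedness of the varieties $X$ is established.
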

	%%%%%%%%%%%%%%%%%%%%%%%%%%%%%%%%%%%%%%%%%%% 

	We  define families of polarized good minimal models and the corresponding moduli functor.

	\begin{defn}[Moduli functor of polarized good minimal models]\label{def:moduli functor}
Let $S$ be a reduced scheme over $k$.
\begin{enumerate}
    \item When $S=\Spec K$ for a field $K$, we define a polarized good minimal model over $K$ as in Definition~\ref{def:pgmm} by replacing $k$ with $K$ and replacing connected with geometrically connected. Similarly we define $(d,u,\sigma)$-polarized good minimal models over $K$.

    \item For general $S$, a \emph{family of polarized good minimal models} over $S$ consists of a projective morphism $X\to S$ and a line bundle $A$ on $X$ such that
    \begin{itemize}
        \item $X\to S$ is a locally stable family,
        \item $(X_s,A_s)$ is a polarized good minimal model over $k(s)$ for every $s\in S$.
    \end{itemize}
    Here $X_s$ is the fiber of $X\to S$ over $s$. Moreover, $K_{X_s}$ is semiample, defining a contraction $X_s\to Z_s$, and $A_s$ is ample over $Z_s$. We denote this family by $(X,A)\to S$.

    \item Let $d\in \mathbb{N}$,  $u\in \mathbb{Q}^{>0}$, and $\sigma\in \mathbb{Q}[t]$ be a polynomial. A \emph{family of $(d,u,\sigma)$-polarized good minimal models} over $S$ is a family $(X,A)\to S$ such that
    $(X_s,A_s)$ is a $(d,u,\sigma)$-polarized good minimal model over $k(s)$ for every $s\in S$.
 
\item We define the moduli functor $\mathfrak{G}_{klt}(d,u,\sigma)$ of $(d,u,\sigma)$-polarized good minimal models
from the category of reduced $\mathbbm{k}$-schemes to the category of groupoids
by choosing:
\begin{itemize}
    \item On objects: for a reduced $\mathbbm{k}$-scheme $S$, one take
    \begin{equation}\nonumber
        \begin{aligned}
        & \mathfrak{G}_{klt}(d,u,\sigma)(S)\\=
        & \{\text{family of } (d,u, \sigma)\text{-polarized good minimal models over } S\}.
        \end{aligned}
    \end{equation}

    We define an isomorphism $(f':X',A'\to S)\to (f:X,A\to S)$ of any two objects in $\mathfrak{G}_{klt}(d,u,\sigma)(S)$ to be an isomorphism
    $\alpha_X:X'\to X$ over $S$ such that $A'\sim_S \alpha_X^*A$.

    \item On morphisms: $\big(f_T:(X_T,A_T)\to T\big)\to \big(f:(X,A)\to S\big)$ consists of morphisms of reduced $k$-schemes $\alpha:T\to S$ such that the natural map $g:X_T\to X\times_S T$ is an isomorphism, and $A_T\sim_T g^*\alpha_X^*A$.
    Here $\alpha_X:X\times_S T\to X$ is the base change of $\alpha$.
\end{itemize}
\end{enumerate}
\end{defn}

	\begin{thm}[{\cite[Theorem A.5]{jiangBoundednessKltGood2023}}]\label{thm:moduli}
		$	\mathfrak{G}_{klt}(d,u,\sigma)$  is a separated Deligne-Mumford stack of finite type, which admits a coarse moduli space $G_{klt}(d,u,\sigma)$ as  a separated algebraic space.
	\end{thm}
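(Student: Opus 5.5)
The plan is to realize $\mathfrak{G}_{klt}(d,u,\sigma)$ as a quotient stack $[H/\mathrm{PGL}_{N+1}]$ of a locally closed subscheme $H$ of a Hilbert scheme, and then to check in turn: finite type, finite (hence unramified, in characteristic zero) stabilizers, and the valuative criterion for separatedness. Keel--Mori then produces the coarse moduli space.

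\emph{Quotient presentation.} By Theorem~\ref{thm:boundedness}, there are fixed $m,\tau$ with $m(\tau K_X+A)$ very ample for all $(X,A)\in\mathcal{G}_{klt}(d,u,\sigma)$. Since $K_X$ and $A$ are fixed in numerical terms by $\sigma$, the Hilbert polynomial $\chi(X,km(\tau K_X+A))$ takes finitely many values, and so does $N+1=h^0(m(\tau K_X+A))$. I take the Hilbert scheme $\mathrm{Hilb}(\mathbb{P}^N)$ for these polynomials and, inside it, the locus $H$ of embedded families $X\subset\mathbb{P}^N_S$ equipped with a line bundle $A$ such that:
\begin{enumerate}
\item $X\to S$ is a locally stable family of klt varieties, which is locally closed by Koll\'ar's theory of locally stable families \cite{kollarFamiliesVarietiesGeneral2023};
\item $K_{X_s}$ is semiample and $A_s$ is ample over $Z_s$, with the invariants $u,\sigma$; these are open or constructible conditions, and the Cartier index of $K$ is bounded;
\item $\cO_{\mathbb{P}^N}(1)|_X\sim_S \cO_X(m\tau K_{X/S})\otimes A^m$, which is locally closed by the seesaw principle.
\end{enumerate}
To handle $A$ itself, I would use the relative Picard scheme and restrict to the component where $m$ times the class of $A$ is prescribed; this component is finite over $H$. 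Then $H$ is of finite type, $\mathrm{PGL}_{N+1}$ acts by changing the basis of $H^0$, and $\mathfrak{G}_{klt}(d,u,\sigma)\simeq[H/\mathrm{PGL}_{N+1}]$. This gives an algebraic stack of finite type.

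\emph{Finite stabilizers.} The stabilizer of a point is $\mathrm{Aut}(X,[A])$, a closed subgroup of $\mathrm{PGL}_{N+1}$ because it preserves the very ample class $m(\tau K_X+A)$, so it is a linear algebraic group. If its identity component were nontrivial, then as a connected linear group it would contain $\mathbb{G}_a$ or $\mathbb{G}_m$. The orbit closures of such a subgroup would then give rational curves through a general point, so $X$ would be uniruled. This is impossible, since $X$ is klt with $K_X$ semiample, hence pseudo-effective. So the stabilizers are finite, and in characteristic zero the stack is Deligne--Mumford.

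\emph{Separatedness.} This is the step I expect to be the main obstacle. Let $R$ be a DVR and let $(X_i,A_i)\to\Spec R$, $i=1,2$, be two families that are isomorphic over the generic point. Each total space $(X_i,0)$ is klt by \cite[Corollary 4.56]{kollarFamiliesVarietiesGeneral2023}. Each special fiber is normal and irreducible, so the birational map $X_1\dashrightarrow X_2$ is an isomorphism in codimension one. Hence $H_i=\tau K_{X_i}+A_i$ correspond under strict transform, and, since $A_1$ and $A_2$ agree generically up to linear equivalence and the special fiber is irreducible, they agree up to a principal divisor. Both $X_i$ are then $\mathrm{Proj}_R\bigoplus_k H^0(X_i,kmH_i)$ of the same graded ring, because sections extend across codimension two on normal varieties. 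So the isomorphism extends. The delicate point is matching the polarizations $A_1$ and $A_2$ on the special fiber; I would handle it by twisting by a multiple of the special fiber, which is principal. Finally, a separated DM stack has finite inertia, so Keel--Mori provides the coarse space $G_{klt}(d,u,\sigma)$ as a separated algebraic space.
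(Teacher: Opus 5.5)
The paper does not prove this statement. It is quoted from \cite[Theorem A.5]{jiangBoundednessKltGood2023}, and finiteness of $\mathrm{Aut}(X_s,L_s)$ is quoted separately from \cite[Corollary A.13]{jiangBoundednessKltGood2023}. Your architecture is the standard one: a quotient $[H/\mathrm{PGL}_{N+1}]$, finite stabilizers, the valuative criterion, then Keel--Mori. However, two load-bearing steps rest on arguments that fail.

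\emph{Separatedness.} The inference ``each special fiber is normal and irreducible, so $X_1\dashrightarrow X_2$ is an isomorphism in codimension one'' does not hold. $X_{1,0}$ and $X_{2,0}$ give two divisorial valuations of the common function field, and nothing you say forces them to agree. For example, take $\bP^1_R$ and its elementary transform (blow up a point of the special fiber, then contract the strict transform of the old fiber). These are smooth polarized families with fiber $\bP^1$, isomorphic over $K$, yet the birational map contracts one special fiber and extracts the other. So this codimension-one claim, not the matching of $A_1,A_2$, is the crux, and it is exactly where the good minimal model hypothesis must enter.

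The missing argument runs as follows. By inversion of adjunction, $(X_i,X_{i,0})$ is plt. Moreover $K_{X_i}+X_{i,0}\sim_{\bQ,R}K_{X_i}$ is nef over $R$, since it restricts to the semiample $K_{X_{i,0}}$. On a common resolution $p\colon W\to X_1$, $q\colon W\to X_2$, set $E=p^*(K_{X_1}+X_{1,0})-q^*(K_{X_2}+X_{2,0})$. This divisor is vertical and $-E$ is $p$-nef. Let $e_1$ be the coefficient in $E$ of the strict transform of $X_{1,0}$. The negativity lemma applied to $E-e_1p^*X_{1,0}$ gives $E\ge e_1p^*X_{1,0}$. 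Suppose the two strict transforms were different. Then plt gives $e_1=a(X_{1,0};X_2,X_{2,0})>0$, while the coefficient of $X_{2,0}$ in $E$ is $-a(X_{2,0};X_1,X_{1,0})<0$, a contradiction. After this step your Proj argument goes through.

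\emph{Finite stabilizers.} The claim that uniruledness contradicts ``$X$ klt with $K_X$ pseudo-effective'' is false. That implication holds for canonical singularities, but not for klt ones. There are rational surfaces with only quotient singularities and ample $K_X$ (e.g.\ Koll\'ar's weighted hypersurfaces, or the Hwang--Keum surfaces), and any such surface with an ample $A$ is a polarized good minimal model. So your $\mathbb{G}_a/\mathbb{G}_m$-orbit argument only yields uniruledness, which is no contradiction.

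The clean repair is to prove separatedness first. Apply it to the constant family $X\times_k\Spec R$: every automorphism of $(X_K,A_K)$ then extends over $R$, so the stabilizer is a proper group scheme. Being closed in $\mathrm{PGL}_{N+1}$, it is affine, hence finite.

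A smaller point: for $[H/\mathrm{PGL}_{N+1}]$ to be an algebraic stack, the defining conditions must be locally closed, not merely ``open or constructible''. Semiampleness of $K_{X_s}$ in particular needs an actual argument here.
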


\begin{thm}\label{thm:quasi moduli'}
 Let $\delta:\widetilde{G}_{klt}(d,u,\sigma)\to G_{klt}(d,u,\sigma)$ be the normalization of $G_{klt}(d,u,\sigma)$, then $\widetilde{G}_{klt}(d,u,\sigma)$ is a quasi-projective scheme. 
Moreover, if the non-normal locus of $G_{klt}(d,u,\sigma)$  is proper, then  $G_{klt}(d,u,\sigma)$ is a quasi-projective scheme.
\end{thm}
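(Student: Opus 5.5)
We apply Viehweg's ampleness criterion (Theorem~\ref{thm:viehweg ampleness}) on a suitable scheme mapping finitely onto the normalization of the moduli space, following \cite[\S7--8]{viehwegQuasiprojectiveModuliPolarized1995}.

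\emph{Step 1: a Hilbert-scheme chart.} By Theorem~\ref{thm:boundedness}, fix $m,\tau$ such that $m(\tau K_X+A)$ is very ample for every $(X,A)\in\mathcal{G}_{klt}(d,u,\sigma)$. Replacing $A$ by a multiple, we may arrange that:
\begin{itemize}
\item $L:=\cO_X(m(\tau K_X+A))$ has vanishing higher cohomology;
\item $h^0(L)=r$ depends only on $(d,u,\sigma)$ (via $\sigma$).
\end{itemize}
The embedded objects form a locally closed subscheme $H$ of the Hilbert scheme of $\bP^{r-1}$. It carries a universal locally stable family $f:(X_H,A_H)\to H$ with $\mathfrak{G}_{klt}(d,u,\sigma)=[H/\mathrm{PGL}(r)]$. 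We replace $H$ by its normalization $\widetilde H$, so that $\widetilde{G}_{klt}(d,u,\sigma)$ is the geometric quotient $\widetilde H/\mathrm{PGL}(r)$. Since the stack is Deligne--Mumford and separated, stabilizers are finite and the action is proper.

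\emph{Step 2: weak positivity on $H$.} On $\widetilde H$ set $L=\cO(m(\tau K+A))$, which is $f$-semiample; indeed it is relatively very ample. Then $f_*L$ is locally free of rank $r$ and compatible with base change. By boundedness and lower semicontinuity of lct in bounded families, there is $l$ with $\lct(X_s,0,|L_s|)>1/l$ for all $s$. Theorem~\ref{thm:Viehweg package'} then gives weak positivity of
$$\mathcal{E}:=\Big(\bigotimes^r f_*(\cO(ql K_{X/H})\otimes L^q)\Big)\otimes\det(f_*L)^{-q}$$
for $q\ge 2$ sufficiently divisible. Since $K$ is semiample, $K_{X/H}$ is $f$-semiample; the Cartier index is bounded, so $qK_{X/H}$ is Cartier. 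The same holds after any base change. Replacing $L$ by $L^{ql+1}$-type twists as in Viehweg, we obtain multiplication maps
$$S^p(f_*\cL)\to f_*\cL^p=:\cQ,\qquad \cL:=\cO(qlK)\otimes L^q,$$
which are surjective for $p\gg0$ by very ampleness and Castelnuovo--Mumford regularity uniformly in the bounded family, with $\cQ$ locally free by part (1). Tensoring with powers of $\det(f_*L)$ to pass to the $\mathrm{PGL}$-invariant version (the determinant normalization in Theorem~\ref{thm:Viehweg package'}), we get a quotient $S^p\mathcal{E}'\to\cQ'$ with $\mathcal{E}'$ weakly positive.

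\emph{Step 3: maximal variation and descent.} The key point is that the kernel of $S^p(f_*\cL)\to f_*\cL^p$ at $s$ is the ideal of $X_s\subset\bP(H^0(\cL_s))$ in degree $p$. For $p\gg0$ this determines $X_s$ together with its embedding. Hence the $G$-orbit determines the isomorphism class of $(X_s,\cL_s)$, which determines $(X_s,A_s)$ because $K_{X_s}$ is intrinsic and $\Pic$ is discrete up to the $\sim_S$ relation. Two consequences follow:
\begin{itemize}
\item finiteness of $\{s': G_{s'}=G_s\}$ on a slice comes from finiteness of the fibers of $H\to G_{klt}$ modulo $G$;
\item $\dim G_s=\dim G$ comes from finiteness of automorphism groups.
\end{itemize}
To apply Theorem~\ref{thm:viehweg ampleness} directly to the algebraic space $\widetilde G$, we pass to a finite cover. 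Following \cite[Theorem 9.25]{viehwegQuasiprojectiveModuliPolarized1995}, we take a finite normal (Galois) cover $Z\to\widetilde G$ from a scheme carrying a family, built via étale slices and Seshadri's covering trick. On $Z$, the pulled-back sheaves are weakly positive by the definition for general $S$, since weak positivity is pulled back along all $T\to Z$ by Lemma~\ref{lem:wp functorial}.(1). Maximal variation holds on $Z$, so $Z$ is quasi-projective with ample $\det(\cQ)^a\otimes\det(\mathcal{E})^b$. This bundle is invariant under the Galois group and descends, after a power, to $\widetilde G$. Ampleness then descends via the finite surjection, and Chevalley's criterion yields quasi-projectivity of $\widetilde G$.

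\emph{Step 4: the non-normal case.} If the non-normal locus $N\subset G_{klt}$ is proper, then $\delta$ is finite and an isomorphism off $N$, and $\delta^{-1}(N)$ is proper. The descended ample sheaf lives on $G_{klt}$ itself, since the determinant sheaves are defined on the stack. Ampleness can then be checked separately:
\begin{itemize}
\item over $N$, a proper scheme, via its finite cover by the normalization;
\item off $N$, as already established.
\end{itemize}
Gluing these uses a standard ampleness criterion for a finite morphism that is an isomorphism outside a proper closed set; alternatively, one uses that $G_{klt}$ is obtained by a pushout along a proper subscheme.

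\emph{Main obstacle.} I expect the main obstacle to be Step 3: producing a finite cover of $\widetilde G$ by a scheme with a universal family, so that Viehweg's criterion applies, and descending ampleness. Weak positivity does not descend along non-split finite maps, but ampleness does, via the norm; the work lies in keeping track of this.
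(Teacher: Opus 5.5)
Your proposal follows essentially the same route as the paper. The shared steps are: a finite cover of the moduli space by a scheme carrying a family (Viehweg's Theorem 9.25); weak positivity of the determinant-twisted direct image via Theorem~\ref{thm:Viehweg package'}; maximal variation of the kernel of the multiplication map, from finiteness of fibers, of torsion in $\Pic(X_s)$ (not discreteness of $\Pic$) and of automorphism groups; Viehweg's ampleness criterion; descent of a power of the natural ample sheaf to the coarse space; and descent of ampleness along the finite map $\delta$ in the non-normal case. Two corrections: the Hilbert-scheme chart in your Steps 1--2 is unnecessary, and weak positivity on the cover $Z$ (not yet known to be quasi-projective) does not follow by pulling back from $\widetilde H$, because $Z$ need not map to $\widetilde H$. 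Instead, apply Theorem~\ref{thm:Viehweg package'} to the pulled-back family over each quasi-projective $T\to Z$, then use Viehweg's Lemma 2.16.d (with $r\mid q$) to pass from $\bigl(\bigotimes^r f_*\cL\bigr)\otimes\det(f_*L)^{-q}$ to $f_*\cL\otimes\det(f_*L)^{-q/r}$.
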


\begin{proof}

By \cite[Theorem~9.25]{viehwegQuasiprojectiveModuliPolarized1995}, there exists a finite cover $\tau:S \to G_{klt}(d,u,\sigma)$ from a reduced normal scheme $S$ such that $\widetilde{G}_{klt}(d,u,\sigma)$ is the quotient of $S$ by a finite group, and there exists a universal family $(f:(X,A)\to S)\in\mathfrak{G}_{klt}(d,u,\sigma)$.
By boundedness \cite{jiangBoundednessKltGood2023}, there exist $n_1,n_2\in \bN$ such that $L:=\cO_X(n_1K_{X/S})\otimes A^{n_2}$ is an $f$-very ample line bundle on $X$ and has no higher cohomology. By \cite{ambroVariationLogCanonical2016} (see also \cite[Lemma~8.10]{kovacsProjectivityModuliSpace2017}), there exists $l\in \bN$ such that $\lct(X_s,B_s,|L_s|)>\frac{1}{l}$ for all $s\in S$. By \cite{jiangBoundednessKltGood2023} again, we can further choose $q\in \bN$ such that $\cO_X(qlK_{X/S})\otimes L^q$ is also an $f$-very ample line bundle on $X$ and has no higher cohomology. Then the sheaves $f_*L$ and $f_*\big(\cO_X(qlK_{X/S})\otimes L^q\big)$ are locally free and compatible with arbitrary base change by \cite[Theorem \uppercase\expandafter{\romannumeral3}.12.11]{hartshorneAlgebraicGeometry1977}. Assume moreover that $q$ is divisible by the rank $r$ of $f_*L$, then by Theorem~\ref{thm:Viehweg package}.(2) and \cite[Lemma~2.16.d]{viehwegQuasiprojectiveModuliPolarized1995}, we obtain that
$$
\cE:=f_* \big(\cO_X(qlK_{X/S})\otimes L^q \big)\otimes\det(f_*L)^{-\frac{q}{r}}
$$
is weakly positive over $S$.

Let $\cK^{(p)}$ be the kernel of the multiplication map from $S^p\cE$ to
$$\cQ:=f_* \big(\cO_X(pqlK_{X/S})\otimes L^{pq} \big)\otimes\det(f_*L)^{-\frac{pq}{r}}.$$
Then, for $s\in S$, the elements of $\cK^{(p)}\otimes k(s)$ are exactly the degree $p$ equations of the embedding $X_s\to \bP^{r'}$, where $r'$ is the rank of $\cE$. For $p\gg 0$, $\cK^{(p)}\otimes k(s)$ recovers the fiber $X_s$ and $L^q|_{X_s}$. 
Since the $q$-torsion elements in $\Pic(X_s)$ are finite, it also recovers $L|_{X_s}$. 
Let $G=\operatorname{PGL}(r',k)$ be the structure group of $\cE$, then $G$ acts on the Grassmannian
$\operatorname{Gr}=\operatorname{Grass}\big(\operatorname{rank}(\cQ), S^{p}(k^{r'})\big)$.
As in $\S \ref{sec:Viehweg's ampleness criterion}$, 
if $G_s$ denotes the orbit of $s$, then $\{z' \in Z \mid G_z = G_{z'}\}$ is isomorphic to $\tau^{-1}(\tau(z))$
and is therefore finite. Since the automorphism group of $(X_s,L_s)$ is finite by \cite[Corollary A.13]{jiangBoundednessKltGood2023}, the dimension of $G_s$ coincides with $\dim(G)$. Hence, the kernel of the multiplication map has maximal variation in all $s\in S$. By Theorem \ref{thm:viehweg ampleness}, there exist $b \gg a \gg 0$ such that $ \det(\cQ)^a \otimes \det(\mathcal{E})^b$
is ample on $S$.

% By Theorem \ref{thm:Viehweg package}.(3), $S^\alpha\cE\otimes\det(\cQ)^{-\beta}$ is weakly positive over $S$ for some  natural numbers $\alpha,\beta>0$. 
% It follows from \cite[Lemma 2.27]{viehwegQuasiprojectiveModuliPolarized1995} that 
% $$\det(\cE)=\det \big(f_*(\cO_X(qlK_{X/S})\otimes L^q)\big)\otimes\det(f_*L)^{-\frac{qr'}{r}}$$
% is ample on $S$. Moreover, 
By \cite[Proposition 7.9 and Lemma 9.26]{viehwegQuasiprojectiveModuliPolarized1995}, some power of $\det(\cQ)^a \otimes \det(\mathcal{E})^b$ descends to a line bundle $\Lambda$ on $G_{klt}(d, u, \sigma)$. Since $\widetilde{G}_{klt}(d,u,\sigma)$ is the quotient of $S$ by a finite group, the pullback $\delta^*\Lambda$ is ample on $\widetilde{G}_{klt}(d,u,\sigma)$. In particular, $\widetilde{G}_{klt}(d,u,\sigma)$ is a quasi-projective scheme.

If, moreover, the non-normal locus of $G_{klt}(d, u, \sigma)$ is proper, then by \cite[\uppercase\expandafter{\romannumeral3}, 2.6.2]{grothendieckElementsGeometrieAlgebrique1961}, the ampleness of $\delta^*\Lambda$ implies the ampleness of $\Lambda$. Consequently, $G_{klt}(d, u, \sigma)$ is a quasi-projective scheme.
\end{proof}

\begin{rem}
There exists an example of a line bundle $L$ on a non-normal non-proper space $X$ such that $L$ is not ample, whereas its pullback to the normalization $\widetilde{X}$ is ample
\cite[Proposition 2]{kollarTwoExamplesSurfaces2011}. Therefore, the quasi-projectivity of $\widetilde{G}_{klt}(d,u,\sigma)$ does not imply that of $G_{klt}(d,u,\sigma)$.
\end{rem}
        %%%%%%%%%%%%%%%%%%%%%%%%%%%%%%%%%%%%%%%%%%%%%%%
	\bibliographystyle{alphaurl}
		\bibliography{qproj}
		%%%%%%%%%%%%%%%%%%%%%%%%%%%%%%%%%%%%%%%%%%%%%%%%%%%
		
	\end{document}